\theoremstyle{plain} 
\newtheorem{theorem}{Theorem}
\newtheorem{lemma}{Lemma}
\newtheorem{corollary}{Corollary}
\newtheorem{conjecture}{Conjecture}
\newtheorem{proposition}{Proposition}
\newtheorem*{conjecture*}{Conjecture}
\theoremstyle{plain}
\theoremstyle{remark}
\newtheorem{remark}{Remark}
\theoremstyle{definition}
\newtheorem*{acknowledgment*}{Acknowledgments}
\numberwithin{equation}{section}
\DeclareMathOperator*{\psum}{\sideset{}{^{\prime}}\sum}
\DeclareMathOperator*{\Res}{Res}
\DeclareMathOperator{\Gal}{Gal}
\DeclareMathOperator{\rad}{rad}
\newcommand{\QQ}{\mathbb{Q}}
\newcommand{\ZZ}{\mathbb{Z}}
\newcommand{\s}{\sigma}
\newcommand{\mf}{\mathfrak}
\renewcommand{\a}{\alpha}
\renewcommand{\l}{\left}
\renewcommand{\r}{\right}
\renewcommand{\d}{\displaystyle}
\renewcommand{\Re}{\mathrm{Re}}
\renewcommand{\Im}{{\rm Im}}
\renewcommand{\epsilon}{\varepsilon}
\renewcommand{\bar}{\overline}
\newcommand{\todaye}{\the\year/\the\month/\the\day}
\title[Some explicit formulas for partial sums of M\"obius functions]
{Some explicit formulas for partial sums of\\ M\"obius functions}
\author{Sh\={o}ta Inoue}
\address{Graduate School of Mathematics, Nagoya University,
Furocho, Chikusaku, Nagoya 464-8602, Japan}
\email{m16006w@math.nagoya-u.ac.jp}
\keywords{M\"{o}bius Function, Dirichlet $L$-Functions, Dedekind Zeta-Functions, 
Gonek-Hejhal Conjecture, Linear Independence Conjecture}
\subjclass[2010]{Primary 11A25; Secondary 11R25}
\begin{document}

\maketitle

\begin{abstract}
The purpose of this paper is to give some explicit formulas involving M\"obius functions, 
which may be known under the generalized Riemann Hypothesis, but unconditional
in this paper.
Concretely, we prove explicit formulas of partial sums of the M\"obius function in arithmetic progressions and partial sums of the M\"obius functions on an Abelian number field $K$. 
In addition, to obtain these explicit formulas, we study a certain finite Euler product appearing from certain relation of primitive characters and imprimitive characters in the present paper.
\end{abstract}



\section{\textbf{Introduction and statement of results}}


The classical explicit formula
\begin{align}	\label{CEFMRH}
M^{*}(x) 
= \lim_{\nu \rightarrow \infty}\sum_{|\gamma| < T_{\nu}}\frac{x^{\rho}}{\zeta'(\rho)\rho} - 2 
+ \sum_{n = 1}^{\infty}\frac{(-1)^{n-1}(2\pi / x)^{2n}}{(2n)!n\zeta(2n + 1)}
\end{align}
was shown by Titchmarsh \cite{T} under the assumption of the Riemann Hypothesis and the simplicity of zeros of the Riemann zeta-function $\zeta(s)$,
where we define $M^{*}(x)$ by
\begin{align*}
M^{*}(x) = \psum_{n \leq x}\mu(n),
\end{align*}
$\mu(n)$ is the M\"obius function, $\sum'$ indicates that if $x$ is an integer, then the last term is to be counted with weight $1/2$, 
and $\{T_{\nu}\}_{\nu = 1}^{\infty}$ is a certain sequence satisfying $T_{\nu} \in [\nu, \nu + 1]$.
In addition, Bartz \cite{B} unconditionally proved the explicit formula
\begin{align}	\label{CEFMU}
M^{*}(x)
=& \lim_{\nu \rightarrow \infty}\sum_{|\gamma| < T_{\nu}}\frac{1}{(m(\rho) - 1)!}\lim_{s \rightarrow \rho}
\frac{d^{m(\rho - 1)}}{ds^{m(\rho - 1)}}\l( (s - \rho)^{m(\rho)}\frac{x^{s}}{\zeta(s)s} \r)\\ \nonumber
&- 2 + \sum_{n = 1}^{\infty}\frac{(-1)^{n-1}(2\pi / x)^{2n}}{(2n)!n\zeta(2n + 1)},
\end{align}
where $\{T_{\nu}\}_{\nu = 1}^{\infty}$ is a certain sequence satisfying $T_{\nu} \in [\nu, 2\nu]$.
It is difficult to apply these explicit formulas because there are some inconvenient points.
For example, the main term of formula \eqref{CEFMU} is more complicated for higher multiplicity zeros, and it is difficult to understand the behavior of multiplicity of nontrivial zeros.
We do not know even the boundedness of multiplicity at present. 
Even if we assume the simplicity of zeros, there is another problem, 
that is the behavior of $\zeta'(\rho)$.
This is also difficult because this problem is related 
to the detailed information on the gaps
between zeros of the Riemann zeta-function.
Here, the following conjecture is known for this problem.

\begin{conjecture*}[The Gonek-Hejhal Conjecture]
Assume the simple zero conjecture for the Riemann zeta-function.
For $\lambda > -\frac{3}{2}$, 
\begin{align*}
\sum_{0 < \gamma < T}\l|\zeta'(\rho)\r|^{2\lambda}
\asymp T(\log{T})^{(\lambda + 1)^2}.
\end{align*}
\end{conjecture*}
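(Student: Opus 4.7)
The plan is to split the conjecture according to the sign of $\lambda$ and to treat $\lambda \geq 0$ and $-3/2 < \lambda < 0$ by quite different methods. For non-negative $\lambda$ the bounds should follow from direct moment computations at the zeros, while the negative-moment range rests on the distribution of small values of $|\zeta'(\rho)|$, which is the genuinely hard part.

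In the first regime I would begin with integer $\lambda = k \geq 1$ and follow Gonek's contour-integral method applied to $f(s) = \zeta'(s)^{k}\zeta'(1-s)^{k}$, which agrees with $|\zeta'(\rho)|^{2k}$ at zeros under RH: express the discrete sum as a contour integral of $f(s)\,\zeta'(s)/\zeta(s)$ around a rectangle containing the zeros, and shift the main segment to a line slightly to the right of $1/2$ to reduce the evaluation to a mean value of $|\zeta'(1/2+it)|^{2k}$ weighted by an arithmetic factor. The moment asymptotics of $\zeta'$ on the critical line, proved for $k=1,2$ and conjectured by Conrey, Ghosh, and Gonek and refined via the Keating--Snaith random matrix heuristic for all $k$, then yield the main term of size $T(\log T)^{(k+1)^{2}}$. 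To pass from integer $k$ to real $\lambda \geq 0$ one interpolates by H\"older's inequality for the upper bound and complements it with a Cauchy--Schwarz lower bound driven by the Riemann--von Mangoldt estimate $N(T) \asymp T\log T$.

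For $-3/2 < \lambda < 0$ the target is the distribution function of $\log|\zeta'(\rho)|$ rescaled by $\log\log\gamma$. The Hughes--Keating--O'Connell random matrix model predicts that its left tail decays like $e^{-3u/2}$ as $u \to -\infty$, and integrating $e^{2\lambda u}$ against this tail converges precisely when $\lambda > -3/2$. The key intermediate step is a large-deviation bound
\[
\#\{\gamma \leq T : |\zeta'(\rho)| \leq e^{-u}\log\gamma\} \ll T\, e^{-3u/2}
\]
uniform in a suitable range of $u$. This can be attacked through a mollified second-moment approach: for a Dirichlet polynomial $M(s)$ of short length, $|\zeta'(\rho)M(\rho)|$ is bounded below on average by applying the explicit formula to $\zeta M$, so small values of $|\zeta'(\rho)|$ force $M(\rho)$ to be large, and the latter can be counted by a discrete mean-value estimate for $M(1/2+it)$.

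The principal obstacle is producing the matching \emph{lower} bound on the number of zeros with $|\zeta'(\rho)|$ small, which controls the lower bound in the conjecture as $\lambda \to -3/2^{+}$. Showing that a positive proportion of zeros satisfies $|\zeta'(\rho)| \ll \gamma^{-1/2 + \epsilon}$ would require genuine input on unusually small gaps between consecutive zeros, well beyond what is known from Montgomery's pair correlation theorem, and not accessible even under the simple zero conjecture alone. A realistic partial outcome would therefore be to establish only the upper bound of the conjecture for all $\lambda > -3/2$ unconditionally on pair correlation, deferring the sharp lower bound to a separate investigation driven by conjectural information on the detailed spacing of zeros.
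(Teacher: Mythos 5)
The statement you are trying to prove is not a theorem of the paper at all: it is the Gonek--Hejhal conjecture, which the paper quotes only as background (it is the hypothesis, together with RH, under which Ng obtains $M(x)\ll x^{1/2}(\log x)^{5/4}$). The paper gives no proof, and no proof is known; so the only correct assessment of your proposal is whether it actually closes an open problem, and it does not. As written it is a programme conditioned on further unproven conjectures, not a proof, and you in fact concede the decisive point yourself in the final paragraph.

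Concretely: in the range $\lambda\geq 0$, the contour-integral reduction needs asymptotics for the (discrete or continuous) $2k$-th moments of $\zeta'$ for every $k$; these are known only for small $k$ (the discrete second moment $\sum_{0<\gamma<T}|\zeta'(\rho)|^{2}\sim \frac{T}{24\pi}(\log T)^{4}$ is Gonek's theorem under RH, while the higher cases rest on the Conrey--Ghosh--Gonek and Keating--Snaith predictions), and H\"older interpolation from integer exponents yields only one-sided inequalities, so it cannot deliver the two-sided $\asymp$ for non-integer $\lambda$. In the range $-\frac{3}{2}<\lambda<0$ everything hinges on the conjectural Hughes--Keating--O'Connell tail $e^{-3u/2}$ for the distribution of $\log|\zeta'(\rho)|$: the mollified second-moment device you sketch could at best give an upper bound for the number of zeros with $|\zeta'(\rho)|$ small in a restricted range of $u$, far from the uniformity needed to integrate $e^{2\lambda u}$ against the tail as $\lambda\to-\frac{3}{2}^{+}$, and the matching lower bound (a positive proportion, or at least the right count, of zeros with $|\zeta'(\rho)|$ abnormally small) is exactly the missing ingredient you defer. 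Since neither direction of the asserted $\asymp$ is established for general $\lambda$, the statement must remain a conjecture, which is precisely how the paper treats it.
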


This conjecture was independently suggested by Gonek \cite{G} and by Hejhal \cite{H}.
By applying this conjecture and the Riemann Hypothesis to a certain truncated form of
\eqref{CEFMRH}, Ng \cite{ng1} proved the following sharp estimate
\begin{align*}
M(x) \ll x^{1/2}(\log{x})^{5/4}.
\end{align*}
This estimate is stronger than the result 
\begin{align*}
M(x) \ll x^{1/2}\exp\l( (\log{x})^{1/2}(\log{\log{x}})^{14} \r),
\end{align*}
which Soundararajan \cite{SM} showed under only the Riemann Hypothesis.
From the above background, it can be seen that the truncated explicit formulas are important to obtain the exact upper bound for the summatory functions of M\"obius functions.

The present paper gives some truncated explicit formulas, 
which generalize the truncated form of \eqref{CEFMU}.
Our first purpose is to obtain the explicit formula for the function
\begin{align*}
M^{*}(x; q, a) = \underset{n \equiv a \bmod{q}}{\psum_{n \leq x}}\mu(n),
\end{align*}
which is the summatory function of the M\"obius function in arithmetic progressions with $(a, q) = 1$. 
This function can be expressed by 
\begin{align}	\label{APRDC}
M^{*}(x; q, a) = \frac{1}{\varphi(q)}\sum_{\chi \bmod{q}}\bar{\chi}(a)\psum_{n \leq x}\chi(n)\mu(n)
\end{align}
from the orthogonality of characters.
Here the first sum runs over all Dirichlet characters modulo $q$, 
and $\varphi$ is the Euler totient function.
Therefore, as the first step, we show the explicit formulas for the summatory function
\begin{align*}
M^{*}(x, \chi) = \psum_{n \leq x}\chi(n)\mu(n).
\end{align*}


\begin{theorem}	\label{exfoML1}
Let $x > 0$, $q \geq 2$, $T \geq \max\l\{T_0, \exp\l(q^{1 / 3} \r), 2 / x \r\}$ 
with $T_{0}$ a sufficiently large absolute constant.
Then, uniformly for all primitive Dirichlet characters $\chi$ modulo d with $d \leq q$, there exists a $T_{\nu} \in [T, 2T]$ satisfying
\begin{align*}
M^{*}(x, \chi) = 
&\sum_{|\gamma| < T_{\nu}}\frac{1}{(m(\rho)-1)!}\lim_{s \rightarrow \rho}\frac{d^{m(\rho)-1}}{ds^{m(\rho)-1}}
			\l(\frac{(s-\rho)^{m(\rho)}}{L(s, \chi)}\frac{x^{s}}{s}\r)\\
&+ \Res_{s = 0}\l(\frac{x^{s}}{L(s, \chi)s}\r) + \sum_{l = 1}^{\infty}\Res_{s = -l}\l(\frac{x^{s}}{L(s, \chi)s}\r) + R,
\end{align*}
where $L(s, \chi)$ is the Dirichlet $L$-function associated with $\chi$, 
and $m(\rho)$ is the multiplicity of the non-trivial zero $\rho$ of $L(s, \chi)$, and $R$ satisfies the estimate
\begin{align}	\label{APME}
R 
\ll &\frac{x}{T}\l(\log(x + 3) + \exp\l( C(\log{\log{T}})^2 \r)\r)+ \min\l\{1, \frac{x}{ T\l< x \r> } \r\}.
\end{align}
Here, $\l< x \r>$ denotes the distance from $x$ to nearest square-free integers coprime to $q$, other than $x$ itself.
Moreover, we find that if $\chi$ is an odd character $($i.e., $\chi(-1) = -1$$)$, then for $l \geq 1$,
\begin{align*}
&\Res_{s = -l}\l( \frac{x^{s}}{L(s, \chi)s} \r)\\
&= \l\{
\begin{array}{cl}
\d{\frac{(-1)^{k}i2(qx / 2 \pi)^{-(2k-1)}}{\tau(\chi)L(2k, \bar{\chi})(2 k - 1)(2k - 1)!}}
		& \text{if \; $l$ is odd with $l = 2k - 1$,}\vspace{2mm} \\
0
		& \text{if \; $l$ is even,}
\end{array}
\r.
\end{align*}
and
\begin{align*}
\Res_{s = 0}\l( \frac{x^{s}}{L(s, \chi)s} \r) = \frac{\pi i}{\tau(\chi)L(1, \bar{\chi})},
\end{align*}
and that if $\chi$ is an even character $($i.e., $\chi(-1) = 1)$, then
\begin{align*}
&\Res_{s = -l}\l( \frac{x^{s}}{L(s, \chi)s} \r)\\
&= \l\{
\begin{array}{cl}
\d{\frac{(-1)^{k}(qx / 2 \pi)^{-2k}}{\tau(\chi)L(2k + 1, \bar{\chi})k(2k)!}}	& \text{if \; $l$ is even with $l = 2k$,}\vspace{2mm} \\
0													& \text{if \; $l$ is odd,}
\end{array}
\r.
\end{align*}
and
\begin{align*}
\Res_{s = 0}\l( \frac{x^{s}}{L(s, \chi)s} \r) 
= \frac{2}{\tau(\chi)L(1, \bar{\chi})}\l( \log\l( \frac{q x}{2 \pi} \r) + \frac{L'(1, \bar{\chi})}{L(1, \bar{\chi})} - \gamma \r),
\end{align*}
where $\gamma$ is the Euler-Mascheroni constant, and $\tau(\chi)$ denotes the Gauss sum
\begin{align*}
\tau(\chi) = \sum_{a = 1}^{q}\chi(a)\exp\l( \frac{2\pi i a}{q} \r).
\end{align*}
\end{theorem}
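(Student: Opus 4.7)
The plan is to derive the explicit formula from a truncated Perron formula followed by a leftward contour shift, collecting residues at every zero of $L(s,\chi)$ and at $s=0$. Since $\chi$ is primitive of conductor $d \ge 2$, $L(s,\chi)$ is entire and $1/L(s,\chi) = \sum_{n \ge 1}\mu(n)\chi(n)n^{-s}$ converges absolutely for $\Re s > 1$. With $c = 1 + 1/\log x$, the truncated Perron formula gives
\begin{equation*}
M^{*}(x,\chi) = \frac{1}{2\pi i}\int_{c - iT_{\nu}}^{c + iT_{\nu}}\frac{x^{s}}{L(s,\chi)\,s}\,ds + O\!\left(\frac{x\log x}{T} + \min\!\left\{1,\frac{x}{T\langle x\rangle}\right\}\right),
\end{equation*}
the $\langle x\rangle$-term appearing because $\mu(n)\chi(n)$ vanishes away from squarefree integers coprime to the conductor of $\chi$.

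The most technical step is to choose $T_{\nu} \in [T,2T]$ so that the horizontal line $\Im s = T_\nu$ avoids every zero of $L(s,\chi)$ by a controlled margin, producing the uniform bound
\begin{equation*}
\left|\frac{1}{L(\sigma \pm iT_{\nu},\chi)}\right| \ll \exp\bigl(C(\log\log T)^{2}\bigr),\qquad -1 \le \sigma \le 2.
\end{equation*}
This is achieved via a careful pigeonhole argument on ordinates of zeros, using the classical density estimate $N(T+1,\chi) - N(T,\chi) \ll \log(qT)$ together with the partial-fraction representation of $(L'/L)(s,\chi)$ coming from the Hadamard product. The hypothesis $T \ge \exp(q^{1/3})$ forces $\log(qT) \asymp \log T$, which is precisely what lets the bound be made uniform in $\chi$ primitive modulo every $d \le q$. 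For $\sigma \le -1$ a much stronger bound follows from the functional equation via the gamma factors in the denominator.

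With $T_\nu$ fixed, I shift the contour to the rectangle with vertices $c \pm iT_\nu$ and $-U \pm iT_\nu$ for large $U$. Inside this rectangle the integrand has poles at each nontrivial zero $\rho$ with $|\gamma| < T_\nu$ (contributing the main sum via the $m(\rho)$-dependent residues), at $s = 0$, and at the trivial zeros $s = -l \in (-U,0)$. The horizontal integrals are $\ll (x/T)\exp(C(\log\log T)^2)$, matching the claimed $R$; upon letting $U \to \infty$, the left vertical integral vanishes because the functional equation forces $|L(s,\chi)|$ to grow factorially while $|x^s|$ grows only like $x^{\sigma}$, and this converts the finite trivial-zero sum into the infinite series $\sum_{l \ge 1}\Res_{s=-l}\bigl(x^s/(L(s,\chi)s)\bigr)$.

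Finally, the tabulated residues at $s = 0$ and $s = -l$ follow by direct Laurent expansion using the functional equation
\begin{equation*}
L(s,\chi) = \frac{\tau(\chi)}{i^{a}\sqrt{q}}\left(\frac{q}{\pi}\right)^{\!1/2 - s}\frac{\Gamma\bigl((1 - s + a)/2\bigr)}{\Gamma\bigl((s + a)/2\bigr)}L(1 - s, \bar\chi),
\end{equation*}
where $a = 0$ if $\chi$ is even and $a = 1$ if $\chi$ is odd. For $\chi$ odd the trivial zeros of $L(s,\chi)$ are simple and lie at $s = -(2k-1)$, while $s = 0$ is only a simple pole coming from the factor $1/s$; for $\chi$ even the trivial zero at $s = 0$ combines with $1/s$ to give a double pole whose Laurent coefficient yields the $\log(qx/2\pi) + L'(1,\bar\chi)/L(1,\bar\chi) - \gamma$ contribution. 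The remaining explicit values reduce to standard residues of $\Gamma$ at nonpositive integers together with the identity $\Gamma(k + 1/2) = \sqrt{\pi}(2k)!/(4^{k}k!)$.
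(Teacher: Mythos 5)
Your overall architecture (truncated Perron formula at $c=1+1/\log x$, choice of a good ordinate $T_\nu\in[T,2T]$, contour shift to $-U$ with $U\to\infty$, residues at the nontrivial zeros, at $s=0$ and at the trivial zeros, and evaluation of the latter through the functional equation and standard $\Gamma$-values) is the same as the paper's. The residue computations you sketch (simple pole at $s=0$ for odd $\chi$, double pole for even $\chi$, vanishing at the non-zero trivial points) are also consistent with the stated formulas.

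However, there is a genuine gap at exactly the step you yourself flag as the most technical one: the uniform bound $|1/L(\sigma\pm iT_\nu,\chi)|\ll\exp\bigl(C(\log\log T)^2\bigr)$ for $-1\le\sigma\le 2$. The argument you propose --- pigeonholing on ordinates using $N(T+1,\chi)-N(T,\chi)\ll\log(qT)$ together with the partial-fraction expansion of $(L'/L)(s,\chi)$ --- cannot deliver this. That classical argument only guarantees a $T_\nu$ whose distance to the nearest zero ordinate is $\gg 1/\log(qT)$, whence $(L'/L)(\sigma+iT_\nu,\chi)\ll(\log qT)^2$ and, after integrating from $\sigma=2$, only $|1/L(\sigma+iT_\nu,\chi)|\le\exp\bigl(C(\log qT)^2\bigr)\approx\exp\bigl(C(\log T)^2\bigr)$. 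With that bound the horizontal integrals contribute $\frac{x}{T}\exp\bigl(C(\log T)^2\bigr)$, which is far larger than the claimed estimate \eqref{APME} (indeed larger than $x$ itself), so the resulting ``explicit formula'' would be vacuous. The doubly logarithmic saving is the real content of the theorem's error term, and it is obtained in the paper by a zero-density argument, not by zero-counting in unit intervals: Montgomery's density theorem (Lemma \ref{ZDBM}) shows that all but few of the rectangles $[\tfrac12+c\,\alpha\log\log(QT)/\log(Q^2T),2]\times I_j$ in $[T,2T]$ are free of zeros of \emph{all} $L(s,\chi)$ with $\chi\in S(Q)$; on such rectangles a mean-value argument for the Dirichlet polynomials $(P_1+\tfrac12 P_2)^k$ with $k\asymp\alpha\log\log(QT)$ gives $\log L(s,\chi)\ll\alpha\log\log(QT)$, and this is then pushed slightly to the left of $\sigma=\tfrac12$ via Borel--Carath\'eodory, a Landau-type lemma for $L'/L$, and an averaging over $t$ to control $\sum\log|t_0-\gamma|$ (Propositions \ref{main_proposition}--\ref{uniformlowerboundforL}, following Ramachandra--Sankaranarayanan). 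This machinery, which also yields a single $T_\nu$ valid simultaneously for every primitive $\chi$ of modulus $d\le q$, is indispensable; without it, or an equivalent substitute, your proof establishes the identity only with a much weaker remainder and does not prove the stated bound \eqref{APME}.
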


This explicit formula is the case of primitive characters. 
On the other hand, for our purpose, we need the analogue of Theorem \ref{exfoML1} for imprimitive characters.
Here we can associate an imprimitive character $\chi$ with a primitive character $\chi^{*}$ inducing $\chi$ by the formula
\begin{align*}
L(s, \chi) = L(s, \chi^{*})\prod_{p | q}\l(1 - \frac{\chi^{*}(p)}{p^{s}}\r).
\end{align*}
Here we put
\begin{align}	\label{def_F}
F_{q, \chi^{*}}(s) := \prod_{p | q}\l(1 - \frac{\chi^{*}(p)}{p^{s}}\r).
\end{align}
In the following, we consider the case $F_{q, \chi^{*}} \not\equiv 1$.
Then this function $F_{q, \chi^{*}}$ has zeros only on imaginary axis.
In addition, from the uniqueness of the prime factorization, 
we can see that all zeros of $F_{q, \chi^{*}}$ are simple except the zero at $s = 0$.
Now, by studying $F_{q, \chi^{*}}$, we obtain an explicit formula for imprimitive characters as the following theorem.
Here we define the arithmetic function $\rad(n)$ by  
\begin{align*}
\rad(n) = \prod_{p | n}p.
\end{align*}

\begin{theorem}	\label{exfoML2}
Let $x > 0, q \geq 2, T \geq \max\l\{T_0, \exp\l(q^{1 / 3} \r), 2 / x \r\}$
with $T_{0}$ a sufficiently large absolute constant, $\chi$ be an imprimitive Dirichlet character modulo $q$, and 
$\chi^{*}$ be the primitive character inducing  $\chi$.
Then, uniformly for all imprimitive Dirichlet character modulo $q$ 
with $F_{q, \chi^{*}} \not\equiv 1$, there exist  
$T_{\nu} \in [T, 2T]$ and $T_{*} \in [T_{\nu}, T_{\nu} + 1]$ satisfying
\begin{align*}
M^{*}(x, \chi) =
&\sum_{|\gamma| < T_{\nu}}\frac{1}{(m(\rho)-1)!}\lim_{s \rightarrow \rho}\frac{d^{m(\rho)-1}}{ds^{m(\rho)-1}}
			\l(\frac{(s-\rho)^{m(\rho)}}{L(s, \chi)}\frac{x^{s}}{s}\r)
+\sum_{|\eta| < T_{*}}\Res_{s = i\eta}\l(\frac{x^{s}}{L(s, \chi)s}\r)\\
&+\sum_{l = 1}^{\infty}\frac{1}{F_{q, \chi^{*}}(-l)}\Res_{s = -l}\l(\frac{x^{s}}{L(s, \chi^{*})s}\r) + R,
\end{align*}
where the second sum on the right hand side runs over zeros of $L(s, \chi)$ on the imaginary axis, 
and $R$ is the error term satisfying estimate \eqref{APME}.
In addition, if $x \geq q^{c}\exp\l(c(\log{T})^{2/3}(\log{\log{T}})^{1/3}\r)$ holds for a sufficiently large constant $c$, then we have
\begin{align}	\label{IAzeros_bd}
\sum_{|\eta| < T_{*}}\Res_{s = i\eta}&\l(\frac{x^{s}}{L(s, \chi)s}\r)
=\frac{(\log{x})^{r + 1 - \kappa}}{L^{(r + 1 - \kappa)}(0, \chi)} + O_{q}((\log{x})^{r - \kappa})\\ \nonumber
&+ O\l( (\log{x})^{\omega\l(q' / b' \r)} \exp\l( C\sqrt{\omega\l(q' / b'  \r)\log\l(q' / b'\r)\log{\log{x}}} \r)\r).
\end{align}
Here, $b$ is the modulus of $\chi^{*}$, $q' = \rad(q)$, $b' = \rad(b)$, $C$ is a positive absolute constant, $\kappa = \kappa(\chi)$ denotes
\begin{align*}
\kappa = \l\{
\begin{array}{ll}
0	& \text{if \; $\chi$ is an even character,}\\
1	& \text{if \; $\chi$ is an odd character,}
\end{array}
\r.
\end{align*}
$\omega(q)$ is the number of distinct prime factors of $q$,
and $r$ indicates the number of the prime factors $p$ of $q$ with $\chi^{*}(p) = 1$.
\end{theorem}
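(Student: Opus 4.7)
The plan is to prove Theorem \ref{exfoML2} by a truncated Perron formula followed by a contour shift to the left, isolating the extra Euler factor $F_{q,\chi^*}$ so that its zeros on the imaginary axis can be handled separately from the nontrivial zeros of $L(s,\chi)$.

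First I would invoke the truncated Perron formula at $c = 1 + 1/\log x$,
$$M^{*}(x,\chi) = \frac{1}{2\pi i}\int_{c-iT_{\nu}}^{c+iT_{\nu}}\frac{x^{s}}{L(s,\chi)\,s}\,ds + R_{1},$$
where $T_{\nu}\in[T,2T]$ is chosen by the usual zero-counting argument so that $L(s,\chi)$ has no zero within distance $(\log T)^{-1}$ of $\Im s = \pm T_{\nu}$ and $1/|L(s,\chi)|$ satisfies a polynomial bound there; the remainder $R_{1}$ contributes the Perron error in \eqref{APME}. Using the factorization $L(s,\chi) = L(s,\chi^{*})F_{q,\chi^{*}}(s)$ and deforming the contour leftward to $\Re s = -U$ with $U\to\infty$, the enclosed poles fall into three families: the nontrivial zeros $\rho$ of $L(s,\chi)$ with $|\Im\rho|<T_{\nu}$, giving the first sum; the imaginary-axis zeros $i\eta$ with $|\eta|<T_{*}$, where $T_{*}\in[T_{\nu},T_{\nu}+1]$ is chosen to dodge these zeros on a short horizontal cross-segment, giving the second sum; and the simple poles at $s=-l$ for $l\ge 1$ coming from the trivial zeros of $L(s,\chi^{*})$. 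Since $F_{q,\chi^{*}}$ vanishes only on the imaginary axis, $F_{q,\chi^{*}}(-l)\neq 0$, and the residue at $-l$ equals $F_{q,\chi^{*}}(-l)^{-1}\Res_{s=-l}(x^{s}/(L(s,\chi^{*})s))$, yielding the third sum.

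The vertical integral at $\Re s = -U$ is pushed to $-\infty$ by the functional equation for $L(s,\chi^{*})$ together with the boundedness of $F_{q,\chi^{*}}$ in the left half-plane; the two horizontal integrals are absorbed into \eqref{APME} by a uniform upper bound on $1/|L(s,\chi)|$ coming from the Hadamard product for $L(s,\chi^{*})$ and a lower bound on $|F_{q,\chi^{*}}|$ off the imaginary axis. The hypothesis $T \ge \exp(q^{1/3})$ is what makes these estimates uniform in $\chi$ with the $\exp(C(\log\log T)^{2})$ loss displayed in \eqref{APME}.

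The main obstacle is \eqref{IAzeros_bd}. At $s=0$, the pole of $x^{s}/(L(s,\chi)s)$ has order $r+2-\kappa$, combining the order-$r$ zero of $F_{q,\chi^{*}}$, the order-$(1-\kappa)$ zero of $L(s,\chi^{*})$, and the factor $1/s$; expanding $x^{s}$ in Taylor series gives the main term $(\log x)^{r+1-\kappa}/L^{(r+1-\kappa)}(0,\chi)$, while the lower powers of $\log x$ produce the $O_{q}((\log x)^{r-\kappa})$ contribution after bounding the local Laurent coefficients of $1/L(s,\chi)$ at $0$ in terms of $q$. For the nonzero zeros $i\eta$, each is a simple zero of $F_{q,\chi^{*}}$ forced by a relation $\chi^{*}(p)p^{-i\eta}=1$ for some prime $p\mid q'/b'$, and the corresponding residue is $x^{i\eta}/(L(i\eta,\chi^{*})F_{q,\chi^{*}}'(i\eta)i\eta)$. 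Grouping these zeros by their supporting subset of primes in $q'/b'$ and summing the oscillating factors $x^{i\eta}$ via a divisor-type Rankin argument over the $2^{\omega(q'/b')}$ subsets should yield the exponential factor $\exp(C\sqrt{\omega(q'/b')\log(q'/b')\log\log x})$. The lower-bound hypothesis on $x$ is precisely what is needed to guarantee that this imaginary-axis error, together with the Perron and horizontal-integral errors, is dominated by the main term $(\log x)^{r+1-\kappa}$.
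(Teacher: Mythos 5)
Your overall skeleton for the explicit formula itself — truncated Perron at $\s_0=1+1/\log x$, a leftward shift with horizontal crossings at two heights $T_{\nu}$ (stopping to the right of the imaginary axis) and $T_{*}$ (crossing it), the three families of poles, and the identification of the residue at $s=-l$ as $F_{q,\chi^{*}}(-l)^{-1}\Res_{s=-l}(x^{s}/(L(s,\chi^{*})s))$ — is the same as the paper's. One caveat there: a ``Hadamard product plus a lower bound on $|F_{q,\chi^{*}}|$'' does not give the loss $\exp(C(\log\log T)^{2})$ in \eqref{APME}; the standard zero-dodging argument only yields bounds of size roughly $\exp(C(\log (qT))^{2})$. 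The paper needs the zero-density-based Proposition \ref{uniformlowerboundforL} to choose $T_{\nu}$, and Proposition \ref{unibd_F} to choose $T_{*}$ so that $|F_{q,\chi^{*}}(\s+iT_{*})|^{-1}$ is under control on the crossing segment; these inputs must be cited, not replaced by a generic Hadamard argument.

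The genuine gap is in your treatment of \eqref{IAzeros_bd} for the zeros $i\eta$ with $|\eta|$ large. You propose to sum the residues $x^{i\eta}/\bigl(L(i\eta,\chi^{*})F_{q,\chi^{*}}'(i\eta)\,i\eta\bigr)$ directly, grouping zeros by the prime that produces them and invoking a ``divisor-type Rankin argument.'' This fails to address the small-denominator problem: writing $F_{q,\chi^{*}}'(i\eta)=\chi^{*}(p)(\log p)p^{-i\eta}\prod_{p'\neq p}\bigl(1-\chi^{*}(p')p'^{-i\eta}\bigr)$, the product over the other primes can be arbitrarily small when $\eta\log p'$ nearly hits $-\arg\chi^{*}(p')$ modulo $2\pi$, so individual residues are unbounded and no mechanism in your sketch produces the factor $\exp\bigl(C\sqrt{\omega(q'/b')\log(q'/b')\log\log x}\bigr)$. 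The paper avoids residue-by-residue summation altogether: it re-encloses the zeros with $T_{0}(q)<|\eta|<T_{*}$ in a thin rectangle with vertical sides $\Re s=\pm 1/\log x$, bounds $|F_{q,\chi^{*}}|^{-1}$ on those sides via Lemma \ref{shortintzerofF} and Proposition \ref{log_F} with the optimized window $h'=\sqrt{\omega(q'/b')/(\log(q'/b')\log\log x)}$ (this optimization is exactly where the square-root exponent comes from), and bounds $|L(s,\chi^{*})|^{-1}$ there via the functional equation combined with the bound $1/L\ll\log(q|t|)$ in the Vinogradov--Korobov zero-free region. That zero-free region is also where the hypothesis $x\geq q^{c}\exp\bigl(c(\log T)^{2/3}(\log\log T)^{1/3}\bigr)$ is used — it guarantees $1-1/\log x$ stays in the zero-free region up to height about $T$ — and not, as you state, to make the error term dominated by the main term; indeed when $\omega(q'/b')>r+1-\kappa$ the error term in \eqref{IAzeros_bd} can exceed the main term, and the theorem makes no dominance claim for an individual imprimitive $\chi$.
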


From the above two theorems, we can obtain the explicit formula for 
$M^{*}(x; q, a)$ that is the our first purpose.
To abbreviate we define $L_{-1}(s; q, a)$ by
\begin{align*}
L_{-1}(s; q, a) := \frac{1}{\varphi(q)}\sum_{\chi \bmod{q}}\bar{\chi}(a)L(s, \chi)^{-1},
\end{align*}
where $a, q$ are positive integers with $(a, q) = 1$. 
The following corollary is our first main result in the present paper.

\begin{corollary}	\label{exfomLL}
Let $x > 0$, $T \geq \max\l\{T_0, \exp\l(q^{1 / 3} \r), 2 / x \r\}$
with $T_{0}$ a sufficiently large absolute constant, 
$a, q \in \ZZ_{>0}$ with $(a, q) = 1$. 
Then, there exist some $T_{\nu} \in [T, 2T]$ and $T_{*} \in [T_{\nu}, T_{\nu} + 1]$ satisfying
\begin{align*}
M^{*}(x; q, a) = 
&\sum_{|\gamma| < T_{\nu}}\frac{1}{(m(\rho)-1)!}\lim_{s \rightarrow \rho}\frac{d^{m(\rho)-1}}{ds^{m(\rho)-1}}
			\l((s-\rho)^{m(\rho)}L_{-1}(s; q, a)\frac{x^{s}}{s}\r)\\
&+\sum_{|\eta| < T_{*}}\Res_{s = i\eta}\l(L_{-1}(s; q, a)\frac{x^{s}}{s}\r)
+\sum_{l = 1}^{\infty}\Res_{s = -l}\l(L_{-1}(s; q, a)\frac{x^{s}}{s}\r) + R,
\end{align*}
where the first sum runs over non-trivial zeros $\rho = \beta + i\gamma$ of Dirichlet $L$-functions modulo $q$, 
and the second sum runs over zeros $i\eta$ of imprimitive Dirichlet $L$-functions on imaginary axis.
Futhermore, $R$ is the error term satisfying estimate \eqref{APME}, and the second series on the right hand side is estimated as
\begin{align*}
&\sum_{|\eta| < T_{*}}\Res_{s = i\eta}\l(L_{-1}(s; q, a)\frac{x^{s}}{s}\r)\\
&= \frac{(\log{x})^{\omega(q) + 1}}{\varphi(q)L^{(\omega(q) + 1)}(0, \chi_0)}
 + O\l(\varphi(q)^{-1}(\log{x})^{\omega(q)}\exp\l( C\sqrt{\omega(q)\log(q)\log{\log{x}}} \r) \r)\\
& \qquad + O_{q}\l((\log{x})^{\omega(q)}\r),
\end{align*}
where $\chi_0$ is the principal character modulo $q$, and $C$ is a positive absolute constant.
In particular, we have
\begin{align*}
&\sum_{|\eta| < T_{*}}\Res_{s = i\eta}\l(L_{-1}(s; q, a)\frac{x^{s}}{s}\r)
+\sum_{l = 1}^{\infty}\Res_{s = -l}\l(L_{-1}(s; q, a)\frac{x^{s}}{s}\r)\\
& \qquad \sim \frac{(\log{x})^{\omega(q) + 1}}{\varphi(q)L^{(\omega(q) + 1)}(0, \chi_0)} \qquad (x \rightarrow +\infty).
\end{align*}
\end{corollary}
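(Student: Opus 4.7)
The strategy is to use the orthogonality identity \eqref{APRDC}, namely $M^{*}(x;q,a) = \varphi(q)^{-1}\sum_{\chi \bmod q}\bar\chi(a)\,M^{*}(x,\chi)$, and to apply Theorem \ref{exfoML1} to each primitive character modulo $q$ and Theorem \ref{exfoML2} to each imprimitive one. Since both theorems are uniform in $\chi$ with error satisfying \eqref{APME}, averaging over the $\varphi(q)$ characters preserves the shape of the error. A delicate technical point here is to fix a single pair $(T_\nu, T_*)$ which is admissible simultaneously for every character modulo $q$: the zero-spacing pigeonhole argument supplying $T_\nu$ in the proofs of Theorems \ref{exfoML1}--\ref{exfoML2} yields, for each $\chi$, a subset of $[T,2T]$ of positive density, and the intersection of these $\varphi(q)$ sets remains nonempty as soon as $T \geq \exp(q^{1/3})$, which is exactly what the hypothesis guarantees.

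\textbf{Repackaging the residues.} Because $\Res$ is linear and $L_{-1}(s;q,a) = \varphi(q)^{-1}\sum_\chi \bar\chi(a) L(s,\chi)^{-1}$, the weighted averages of the three residue sums in Theorems \ref{exfoML1}--\ref{exfoML2} become the three residue sums for $L_{-1}(s;q,a)x^{s}/s$ appearing in the corollary. The nontrivial-zero contributions combine into a single sum over zeros of all Dirichlet $L$-functions modulo $q$. The imaginary-axis contributions gather the $s = 0$ terms from Theorem \ref{exfoML1} (nonzero only for primitive even $\chi$ with conductor at least $2$) together with the full $s = i\eta$ sum from Theorem \ref{exfoML2}; no extra work is needed for primitive $\chi$ since $L(s,\chi)$ has no zero off the real axis but on the imaginary line. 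The negative-integer sums unify via the identity $\Res_{s=-l}\l(x^{s}/(L(s,\chi)s)\r) = F_{q,\chi^{*}}(-l)^{-1}\Res_{s=-l}\l(x^{s}/(L(s,\chi^{*})s)\r)$, valid for $l \geq 1$ because every factor $1 - \chi^{*}(p) p^{l}$ in $F_{q,\chi^{*}}(-l)$ is nonzero; averaging over $\chi$ then collapses the contribution into $\sum_{l \geq 1}\Res_{s=-l}\l(L_{-1}(s;q,a)x^{s}/s\r)$.

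\textbf{The main obstacle: the imaginary-axis estimate.} The hardest step is the asymptotic for the second series. Apply \eqref{IAzeros_bd} character by character. The integer $r(\chi) + 1 - \kappa(\chi)$ is maximized by the principal character $\chi_{0}$: its inducing primitive character is the trivial character modulo $1$, so $\chi_{0}^{*}(p) = 1$ for every $p \mid q$, yielding $r(\chi_{0}) = \omega(q)$ and $\kappa(\chi_{0}) = 0$. Hence $\chi_{0}$ alone supplies the leading term $(\log x)^{\omega(q)+1}/(\varphi(q) L^{(\omega(q)+1)}(0, \chi_{0}))$ after being weighted by $\bar\chi_{0}(a)/\varphi(q) = 1/\varphi(q)$. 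For every non-principal $\chi$ the conductor $b(\chi) \geq 2$ forces $r(\chi) \leq \omega(q) - 1$, so the corresponding main terms contribute only $(\log x)^{\omega(q)}$ with $\chi$-dependent constants; summing them over the $\varphi(q) - 1$ non-principal characters and dividing by $\varphi(q)$ gives the $O_{q}((\log x)^{\omega(q)})$ error. The remaining bound $\varphi(q)^{-1}(\log x)^{\omega(q)}\exp(C\sqrt{\omega(q)\log q \log\log x})$ comes from the principal character's second $O$-term in \eqref{IAzeros_bd}, since $\omega(q'/1) = \omega(q)$ and the analogous term for every non-principal $\chi$ satisfies $\omega(q'/b') \leq \omega(q) - 1$, giving strictly smaller contributions. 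Finally, the asymptotic relation holds because the residues at negative integers contribute only $O(x^{-1})$ as $x \to \infty$, which is negligible compared with $(\log x)^{\omega(q)+1}$.
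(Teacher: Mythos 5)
Your proposal follows essentially the same route as the paper, which offers no separate proof of Corollary \ref{exfomLL}: one averages Theorems \ref{exfoML1} and \ref{exfoML2} over the characters modulo $q$ via \eqref{APRDC}, combines the residue sums by linearity (using $\Res_{s=-l}(x^{s}/(L(s,\chi)s)) = F_{q,\chi^{*}}(-l)^{-1}\Res_{s=-l}(x^{s}/(L(s,\chi^{*})s))$ for the trivial-zero part), and extracts the stated asymptotic from \eqref{IAzeros_bd}, where the principal character (with $r=\omega(q)$, $b'=1$) dominates and every non-principal character has $r\leq\omega(q)-1$ and $\omega(q'/b')\leq\omega(q)-1$. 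Two minor remarks: the positive-density intersection argument you sketch for a common $(T_{\nu},T_{*})$ is both unjustified as stated and unnecessary, since the uniformity in Proposition \ref{uniformlowerboundforL} (and Proposition \ref{unibd_F}) already produces a single admissible $T_{\nu}$ and $T_{*}$ for all characters modulo $q$, exactly as the paper remarks; and your parenthetical claim that the $s=0$ residue in Theorem \ref{exfoML1} vanishes except for even primitive $\chi$ is incorrect (for odd primitive $\chi$ it equals $\pi i/(\tau(\chi)L(1,\bar{\chi}))\neq 0$), though this is harmless because those contributions are $O_{q}(1)$ and are absorbed into $O_{q}((\log x)^{\omega(q)})$.
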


To prove the above theorems, we need some upper bound of $1 / L(s, \chi)$ in certain domains, which is embodied in the following two propositions.

\begin{proposition}	\label{uniformlowerboundforL}
Let $\a \geq 13, T \geq T_{0}(\a) > 0$, and $1 \leq Q \leq (\log{T})^{\a / 4}$, where $T_{0}(\a)$ is a sufficiently large constant depending only on $\a$.
Then, we have
\begin{align*}
\min_{T \leq t \leq 2T}\l(\underset{\chi \in S(Q)}{\max_{\frac{1}{2} \leq \s \leq 2}}|L(\s + it, \chi)|^{-1}\r)
\leq \exp(C \a (\log{\log(QT)})^2),
\end{align*}
where $C$ is a positive absolute constant, and $S(Q)$ is the set of all primitive Dirichlet characters modulo $q$ with $q \leq Q$. 
\end{proposition}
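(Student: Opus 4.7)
The plan is to show that the set of $t \in [T, 2T]$ for which some character $\chi \in S(Q)$ and some $\sigma \in [1/2, 2]$ violate the claimed bound has measure strictly less than $T$; the desired ``good'' $t_\nu$ then exists in $[T, 2T]$. The argument rests on a three-step combination: a Lipschitz reduction from the continuum of $\sigma$-values to a finite grid, a large-deviation-type estimate for $|L(\sigma + it, \chi)|^{-1}$ at each grid point, and a union bound over the $O(Q^2)$ characters in $S(Q)$.

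The first step uses the standard expansion
\[
\frac{L'}{L}(s,\chi) = \sum_{|\gamma - t| \leq 1} \frac{1}{s - \rho} + O(\log(q|t|+2)),
\]
valid for $\sigma \in [1/2, 2]$ and $|t| \geq 2$. After excising a small-measure set of $t \in [T, 2T]$ that come too close to any non-trivial zero of any $L(s, \chi)$ with $\chi \in S(Q)$ (which excision is affordable since the total zero count in $[T-1, 2T+1]$ over the family is $\ll Q^2 T \log(QT)$), we obtain Lipschitz control of $\log|L(\sigma+it,\chi)|$ in $\sigma$, and may replace the continuous supremum by a maximum over a grid of $\sigma$-values of size $O(\log(QT))$, losing only a bounded multiplicative factor.

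Second, for each fixed $\chi$ and each fixed $\sigma$ in the grid, one applies a moment/large-deviation estimate of Soundararajan type. Approximating $\log L(s,\chi)$ by a short Dirichlet polynomial over primes (with error controlled by nearby zeros) and using the mean-value theorem for Dirichlet polynomials yields an estimate of the shape
\[
\mathrm{meas}\bigl\{t \in [T,2T] : |L(\sigma+it,\chi)|^{-1} > e^V\bigr\} \ll T\exp\Bigl(-c V/\log\log(QT)\Bigr)
\]
for $V$ in a suitable range, uniformly in the modulus $q \leq Q$. Summing this bad measure over the $\ll Q^2$ characters in $S(Q)$ and the $O(\log(QT))$ grid points, and using the hypothesis $Q \leq (\log T)^{\alpha/4}$ so that $Q^2 \leq (\log T)^{\alpha/2}$, one checks that the total bad measure is strictly less than $T$ once $V \geq C\alpha (\log\log(QT))^2$ for a sufficiently large absolute constant $C$. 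This yields the desired $t \in [T, 2T]$.

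The main obstacle is the uniform-in-$\chi$ version of the second step: carrying out the Soundararajan-style approximation to $\log L(s,\chi)$ with implicit constants absolute (not depending on $q$) and handling the collective contribution of zeros of the full family $\{L(s,\chi): \chi \in S(Q)\}$ near a common ordinate is the heart of the matter. The other ingredients -- the Lipschitz reduction, the zero-counting needed to justify the excision, and the final union bound -- are comparatively routine once this uniform estimate is in place.
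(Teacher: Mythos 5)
Your outer structure (excise $t$ near zero ordinates, reduce $\sigma$ to a finite grid, union bound over the $\ll Q^{2}$ characters and the grid points) is sound, but the proposal has a genuine gap at exactly the point you yourself flag as ``the heart of the matter'': the claimed unconditional estimate $\mathrm{meas}\{t\in[T,2T]:|L(\sigma+it,\chi)|^{-1}>e^{V}\}\ll T\exp(-cV/\log\log(QT))$, uniformly for all $\sigma\in[1/2,2]$ and all $q\le Q$, is not an off-the-shelf result, and the route you sketch does not produce it. The obstruction is the zero contribution near the critical line: unconditionally the pointwise input is $\frac{L'}{L}(s,\chi)=\sum_{|\gamma-t|\le 1}\frac{1}{s-\rho}+O(\log(q(|t|+2)))$, so integrating in $\sigma$ gives $-\log|L(\sigma+it,\chi)|\le\sum_{|\gamma-t|\le 1}\log\frac{C}{|\sigma+it-\rho|}+O(\log(qT))$, and the $O(\log(qT))$ error, together with the aggregate contribution of the $\asymp\log(qT)$ zeros lying at distance $\asymp 1/\log(qT)$ from $\frac12+it$, already dwarfs the target $\alpha(\log\log(QT))^{2}$. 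Your excision only removes $t$ within a tiny distance of some ordinate, which controls the nearest zero but not this aggregate; Soundararajan-type machinery bounds the upper tail of $\log|L|$ and its key inequality uses RH, while unconditional control of the lower tail at $\sigma=1/2$ (Selberg-type) is itself a substantial argument requiring mollification and zero-density input --- it is not ``comparatively routine'' and is essentially equivalent in difficulty to the proposition being proved.

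The paper's proof takes a different and, at this point, essential route (extending Ramachandra--Sankaranarayanan): Montgomery's zero-density theorem is used to locate inside $[T,2T]$ a whole subinterval $J_{0}$ on which the rectangle $\sigma\ge\frac12+O(\alpha\log\log(QT)/\log(Q^{2}T))$ is zero-free for every $\chi\in S(Q)$ simultaneously; there $\log L(s,\chi)\ll\alpha\log\log(QT)$ is proved by a $2k$-th moment of a smoothed prime Dirichlet polynomial with $k=[\alpha\log\log(QT)]$ (this is where your ``large deviation for the polynomial part'' idea genuinely appears, but only to the right of the zero-free abscissa, where zeros cause no trouble); the bound is then transported to $\sigma\ge\frac12$ via the functional equation, the maximum modulus principle, and Landau's lemma applied at the very small radius $r=(\log\log(QT))^{2}/\log(Q^{2}T)$, so that the unavoidable $O(\log(QT))$ error is multiplied by $r$ and becomes $O(\alpha(\log\log(QT))^{2})$; what remains is the logarithmic contribution of the few zeros within $20\alpha r$ of $\frac12+it$, which is handled by averaging $\sum\log|t-\gamma|/(20\alpha r)$ over a unit $t$-interval to select the good ordinate. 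To salvage your measure-theoretic plan you would have to prove your large-deviation lemma for the negative tail down to $\sigma=1/2$, unconditionally and uniformly in the family, which in effect means reproducing this zero-density-plus-small-radius analysis rather than quoting it.
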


This proposition is the consequence for primitive Dirichlet characters.
On the other hand, we need a similar result for imprimitive Dirichlet characters to prove  Theorem \ref{exfoML2}.
Then we need the upper bound of $1 / F_{q, \chi^{*}}$, which is the following proposition.

\begin{proposition}	\label{unibd_F}
Let $q \geq 2$ be an integer, $|T| \geq \omega(q)$, $S_1(q)$ be a nonempty subset of the set of all imprimitive Dirichlet characters modulo $q$ with $F_{q, \chi^{*}} \not\equiv 1$, 
and $d$ be the smallest modulus
of a primitive character $\chi^{*}$ inducing $\chi$ with $\chi \in S_{1}(q)$.
Then we have
\begin{align*}
&\min_{t \in [T, T + 1]}\l(\underset{\chi \in S_1(q)}
{\max_{|\s| \leq h}}|F_{q, \chi^{*}}(\s + it)|^{-1}\r)\\
&\leq \exp\l(C'' \omega\l(q' / d'\r)\log\l(\#S_{1}(q)\omega\l( q' / d' \r) + 2\r)
\l( 1 + \sqrt{\frac{\log\l(q' / d'\r) / \omega\l(q' / d'\r)}
{\log\l( \#S_{1}(q)\omega\l( q' / d'\r) + 2 \r)}} \r)\r),
\end{align*}
where 
$C$ is a sufficiently large positive absolute constant, $q' = \rad(q), d' = \rad(d)$, and 
$h \asymp \sqrt{\frac{\omega\l(q' / d'\r) / \log\l(q' / d'\r)}{\log\l( \#S_{1}(q)\omega\l( q' / d'\r) + 2 \r)}}$.
\end{proposition}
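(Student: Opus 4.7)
The plan is to use an elementary lower bound on each factor of $F_{q,\chi^{*}}$ together with a pigeonhole (Lebesgue-measure) argument on the short interval $[T, T+1]$, and then jointly optimize with respect to the strip half-width $h$.

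First I would exploit that $\chi^{*}(p) = 0$ whenever $p$ divides the conductor $b$ of $\chi^{*}$, so the only nontrivial factors in
$$F_{q,\chi^{*}}(\sigma+it) = \prod_{p \mid q}\bigl(1 - \chi^{*}(p)\,p^{-\sigma-it}\bigr)$$
come from primes $p \mid q$ with $p \nmid b$. For such $p$, writing $\chi^{*}(p) = e^{i\theta_{p}}$, the elementary identity $|1 - re^{i\phi}|^{2} = (r-\cos\phi)^{2} + \sin^{2}\phi \geq \sin^{2}\phi$ (valid for any $r > 0$) yields the pointwise lower bound $|1 - \chi^{*}(p)p^{-\sigma-it}| \geq |\sin(\theta_{p} - t\log p)|$, in which the right-hand side is independent of $\sigma$.

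Next, for a parameter $\eta > 0$, I would introduce the bad set
$$B(\chi, p, \eta) = \{t \in [T, T+1] : |\sin(\theta_{p} - t\log p)| < \eta\}.$$
A period-counting argument, in which the hypothesis $|T| \geq \omega(q)$ ensures that the relevant oscillations of $p^{it}$ are sufficiently resolved on a unit interval, gives $|B(\chi,p,\eta)| \ll \eta$ uniformly in $p$. Summing over $\chi \in S_{1}(q)$ and over the at most $\omega(q'/d')$ nontrivial primes per character, the union has Lebesgue measure $\ll \#S_{1}(q)\,\omega(q'/d')\,\eta$. Choosing $\eta$ inversely proportional to this quantity yields a good $t \in [T, T+1]$ outside every bad set, and on this $t$ the factor-wise bound gives, uniformly in $\chi \in S_{1}(q)$ and in $\sigma$,
$$|F_{q,\chi^{*}}(\sigma+it)|^{-1} \leq \eta^{-\omega(q'/d')},$$
whose logarithm recovers the main contribution $C\,\omega(q'/d')\,\log(\#S_{1}(q)\,\omega(q'/d') + 2)$ in the claimed exponent.

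To reach the stated refinement $1 + \sqrt{\log(q'/d')/\omega(q'/d')/\log(\#S_{1}(q)\omega(q'/d')+2)}$ and to pin down the exact width $h$, one must jointly optimize the threshold $\eta$ with the strip half-width $h$, trading the bad-set measure $\#S_{1}(q)\,\omega(q'/d')\,\eta$ against the propagation cost $h \cdot \sup_{|\sigma'|\leq h}\,|F'_{q,\chi^{*}}/F_{q,\chi^{*}}(\sigma'+it)|$ used to transfer a bound from the imaginary axis into the strip; this sup is of order $\log(q'/d')/\eta$ on the good set, so balancing the two terms produces both the square-root correction and the specific value of $h$ in the proposition. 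The main obstacle, in my view, is precisely this joint optimization together with the combinatorial book-keeping needed to replace the character-dependent prime count $\omega(q'/\rad(b))$ by the uniform count $\omega(q'/d')$, since $d$ is only the smallest conductor in $S_{1}(q)$ and $\rad(b)$ for other characters need not contain $\rad(d)$.
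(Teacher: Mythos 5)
Your approach is genuinely different from the paper's and its core is sound. The paper proves this proposition through the analytic machinery it builds in Section \ref{FEPNC}: the Hadamard factorization of $F_{q,\chi^{*}}$ (Lemmas \ref{F_order}--\ref{b_estimate}), a Landau-type count of the zeros $i\eta$ in short intervals obtained from $\Re(F'/F)$ (Lemma \ref{shortintzerofF}), a pigeonhole choice of $t_0\in[T,T+1]$ whose distance to every zero of every $F_{q,\chi^{*}}$, $\chi\in S_1(q)$, is at least of size $1/(2C\#S_1(q)\log(q'/d'))$, and finally the representation of $\log F_{q,\chi^{*}}$ by the zeros with $|t_0-\eta|\le h$ (Proposition \ref{log_F}), whose error term $h^{-1}\omega(q'/d')+h\log(q'/d')$ is what forces the stated value of $h$ and produces the factor $1+\sqrt{\cdot}$. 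You instead bound each Euler factor pointwise by $|1-\chi^{*}(p)p^{-\sigma-it}|\ge|\sin(\theta_p-t\log p)|$ (your identity is correct) and run the pigeonhole on Lebesgue measure in $t$ rather than on zero locations. This is more elementary, needs none of the Hadamard/zero-counting apparatus, and — importantly — your factor bound is uniform in \emph{all} real $\sigma$, so with $\eta\asymp(\#S_1(q)\,\omega(q'/d')+2)^{-1}$ it gives $\exp\bigl(C\,\omega(q'/d')\log(\#S_1(q)\omega(q'/d')+2)\bigr)$ on a common good $t$, i.e.\ only the first term of the paper's exponent, on an unrestricted strip. What the paper's route buys is the intermediate results (Lemmas \ref{shortintzerofF}, Proposition \ref{log_F}) that are reused in the proof of Theorem \ref{exfoML2}; what yours buys is a shorter proof of a formally stronger bound.

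Two caveats. First, your closing paragraph rests on a misreading: the factor $1+\sqrt{\cdot}$ only \emph{enlarges} the allowed exponent and the restriction $|\sigma|\le h$ only \emph{shrinks} the domain, so there is nothing to ``reach'' by jointly optimizing $\eta$ with $h$, and no propagation of a bound from the imaginary axis into the strip is needed — your sine bound already holds for every $\sigma$. Those features of the statement are artifacts of the paper's zero-based method, not extra strength you must recover; the proposed trade-off against $h\cdot\sup|F'_{q,\chi^{*}}/F_{q,\chi^{*}}|$ is superfluous and should simply be deleted. Second, the bookkeeping you flag as the main obstacle — replacing the character-dependent count $\omega(q'/\rad(b_\chi))$ by $\omega(q'/d')$ when $d$ is merely the smallest conductor occurring in $S_1(q)$ — is indeed left unresolved in your proposal; note, however, that the paper's own proof makes the same silent identification when it applies Lemma \ref{shortintzerofF} to every $\chi\in S_1(q)$ with the single quantity $\log(q'/d')$, so this is a defect of the statement's normalization rather than of your method, and your argument goes through verbatim with $\omega(q'/d')$, $\log(q'/d')$ replaced by $\max_{\chi\in S_1(q)}\omega(q'/\rad(b_\chi))$.
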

We are going to prove some properties of $F_{q, \chi^{*}}$ including this proposition in section \ref{FEPNC}. 

Here we note that Proposition \ref{uniformlowerboundforL} is an extension 
to Dirichlet $L$-functions of the result in the case of the Riemann zeta-function by
 Ramachandra and Sankaranarayanan \cite[Theorem 1.2]{RS}.
This result is useful when we prove some explicit formulas including the above formulas. 
For example, K\"uhn, Robles and Roy showed an explicit formula involving 
the M\"obius function and a primitive Dirichlet character 
under the Riemann Hypothesis and the simple zero conjecture 
for Dirichlet $L$-functions \cite[Theorem 1.1 (ii)]{KRR}. 
The author expects that it is possible to prove its explicit formula without
the Riemann Hypothesis for Dirichlet $L$-functions.
In fact, they use the Riemann Hypothesis for Dirichlet $L$-functions only in the proof 
of their Lemma 2.2, and in this paper, we are going to prove 
Proposition \ref{uniformlowerboundforL} which is an unconditional alternative of their Lemma 2.2.

One more useful point of this consequence is the uniformity for 
Dirichlet characters modulo $q$ with $q \leq Q$.
From this uniformity, there are some applications.
For example, one of the applications is that we can take $T_{\nu}$ not depending on the characters modulo $q$ in Corollary \ref{exfomLL}.
In addition, by the following result, we can apply Proposition \ref{uniformlowerboundforL} to a certain number field.

\begin{proposition}[Theorem 8.2 in \cite{WN}]	\label{relationwithLandD}
Let $K / \QQ$ be an Abelian number field, $K \subset K_{m}$ be the $m$-th cyclotomic field, and
$X(K)$ be the group of all characters $\Gal(K_{m} / \QQ)$ which are equal to unity on $\Gal(K_{m} / K)$.
Then we have
\begin{align*}
\zeta_{K}(s) = \prod_{\chi \in X(K)}L(s, \chi^{*}),
\end{align*}
where $\chi^{*}$ is the primitive Dirichlet character inducing $\chi$. 
\end{proposition}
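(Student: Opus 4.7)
The plan is to compare Euler factors at each rational prime on both sides of the claimed identity, exploiting the abelian Galois theory of $K_{m}/\QQ$. Write $G = \Gal(K_{m}/\QQ) \cong (\ZZ/m\ZZ)^{*}$ and $H = \Gal(K_{m}/K)$, so that $X(K)$ is naturally identified with the character group of the quotient $G/H \cong \Gal(K/\QQ)$. Both $\zeta_{K}(s)$ and $\prod_{\chi \in X(K)} L(s, \chi^{*})$ converge absolutely as Euler products for $\Re(s) > 1$, so it suffices to prove, for every rational prime $p$, the identity of local factors
\begin{align*}
\prod_{\mathfrak{p} \mid p} \l(1 - N\mathfrak{p}^{-s}\r)^{-1}
= \prod_{\chi \in X(K)} \l(1 - \chi^{*}(p) p^{-s}\r)^{-1}.
\end{align*}

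For an unramified prime $p$ (so $p \nmid m$), let $\sigma_{p} \in G$ denote the Frobenius corresponding to $p \bmod m$, let $f$ be the order of $\sigma_{p} H$ in $G/H$, and set $g = [G:H]/f$; these are the residue degree and the splitting number of $p$ in $K$, so the left-hand local factor equals $(1 - p^{-fs})^{-g}$. The right-hand side is handled by the elementary character identity
\begin{align*}
\prod_{\chi \in \widehat{A}} \l(1 - \chi(a)T\r) = \l(1 - T^{f}\r)^{|A|/f},
\end{align*}
valid for any finite abelian group $A$ and any $a \in A$ of order $f$; it follows because the restriction map $\widehat{A} \to \widehat{\langle a \rangle}$ is surjective with kernel of order $|A|/f$. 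Applying this to $A = G/H$, $a = \sigma_{p} H$, and $T = p^{-s}$, and using that $\chi^{*}(p) = \chi(\sigma_{p})$ whenever $p \nmid m$, yields the desired matching of unramified local factors.

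The ramified primes $p \mid m$ are the substantive case. Write $m = p^{a} m'$ with $(p, m') = 1$, so that $G \cong (\ZZ/p^{a}\ZZ)^{*} \times (\ZZ/m'\ZZ)^{*}$; under this decomposition the inertia subgroup at $p$ is $I_{p} = (\ZZ/p^{a}\ZZ)^{*} \times \{1\}$, and the decomposition subgroup is $D_{p} = I_{p} \times \langle \sigma \rangle$, where $\sigma = (1, p \bmod m')$ is a Frobenius lift. A character $\chi \in \widehat{G}$ satisfies $p \nmid \mathrm{cond}(\chi^{*})$ precisely when $\chi|_{I_{p}}$ is trivial, and in that case $\chi^{*}(p) = \chi(\sigma)$; otherwise $\chi^{*}(p) = 0$. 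Consequently only the characters in $X(K)$ that are trivial on $I_{p}$ contribute non-trivial local factors on the right-hand side, and these form the dual of $G/(I_{p} H)$. Applying the character identity of the previous paragraph to this quotient with $a$ equal to the image of $\sigma$ produces a factor of the form $(1 - p^{-f's})^{-g'}$, which I would then match with the corresponding local factor of $\zeta_{K}$ at $p$. The main obstacle is the local bookkeeping: one must verify that the indices $|I_{p} H/H|$, $|D_{p} H/(I_{p} H)|$, and $[G : D_{p} H]$ equal the ramification index, the residue degree, and the splitting number of $p$ in $K$, respectively. Once this local-to-global dictionary is established the identification of Euler factors at ramified primes is automatic, and the theorem follows.
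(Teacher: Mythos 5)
The paper itself gives no proof of this proposition: it is imported verbatim as Theorem 8.2 of Narkiewicz \cite{WN} and used as a black box, so the only question is whether your Euler-factor argument stands on its own. It essentially does, and it is the standard textbook proof. The unramified case is complete: the character identity $\prod_{\chi\in\widehat{A}}(1-\chi(a)T)=(1-T^{f})^{|A|/f}$ is correct for $a$ of order $f$, and $\chi^{*}(p)=\chi(\sigma_{p})$ for $p\nmid m$, so both sides give $(1-p^{-fs})^{-g}$. For $p\mid m$ your reduction is also right (only $\chi\in X(K)$ trivial on $I_{p}$ contribute, these are the characters of $G/(I_{p}H)$, and $\chi^{*}(p)=\chi(\sigma)$ for them), and the ``local bookkeeping'' you defer is genuinely standard and closes up in two lines: for a prime of $K_{m}$ above $p$ the decomposition and inertia groups of the prime below it in $K/\QQ$ are the images $D_{p}H/H$ and $I_{p}H/H$ (surjectivity of the decomposition group onto that of a Galois subextension), so $e_{K}=|I_{p}H/H|$, $f_{K}=|D_{p}H/(I_{p}H)|$, $g_{K}=[G:D_{p}H]$; moreover $D_{p}H=I_{p}H\langle\sigma\rangle$, so the image of $\sigma$ generates $D_{p}H/(I_{p}H)$, whence your $f'$ equals $f_{K}$ and $g'=[G:I_{p}H]/f'=[G:D_{p}H]=g_{K}$, matching the local factor $(1-p^{-f_{K}s})^{-g_{K}}$ of $\zeta_{K}$ at $p$. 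Two cosmetic remarks: a prime with $p\mid m$ need not actually ramify in $K$ (e.g.\ when $\chi|_{I_{p}}$ is trivial for all $\chi\in X(K)$, or when $2\,\|\,m$), but your computation never uses ramification, only $p\mid m$, so this is harmless; and after matching Euler factors for $\Re(s)>1$ one should invoke analytic continuation to get the identity of meromorphic functions on $\CC$, which is immediate. With those small additions the proof is complete and agrees with the argument in the cited source.
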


The following corollary is an immediate consequence of Propositions \ref{uniformlowerboundforL} and \ref{relationwithLandD}.

\begin{corollary}	\label{abellowerbound}
Let $\a \geq 13$ and $T \geq T_{0}(\a) > 0$ with $T_{0}(\a)$ a sufficiently large constant depending only on $\a$.
If $K$ is an Abelian number field, $K_m$ is the smallest cyclotomic field satisfying $K \subset K_{m}$, then we have
\begin{align*}
\min_{T \leq t \leq 2T}\l(\max_{\frac{1}{2} \leq \s \leq 2}|\zeta_K(\s + it)|^{-1}\r)
\leq \exp(C \a (\#X(K)) (\log{\log(mT)})^2)
\end{align*}
for $m \leq (\log{T})^{\a / 4}$, where $C$ is a positive absolute constant.
\end{corollary}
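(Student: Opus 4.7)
The plan is to deduce this as a direct combination of the two propositions cited just above the statement. By Proposition \ref{relationwithLandD}, the Dedekind zeta-function factors as
\[
\zeta_{K}(s) = \prod_{\chi \in X(K)} L(s, \chi^{*}),
\]
so at each point $s = \sigma + it$ where none of the $L$-factors vanish,
\[
|\zeta_{K}(\sigma + it)|^{-1} = \prod_{\chi \in X(K)} |L(\sigma + it, \chi^{*})|^{-1}.
\]
Taking the maximum over $\sigma \in [1/2, 2]$ and pulling it past the finite product gives
\[
\max_{1/2 \leq \sigma \leq 2} |\zeta_{K}(\sigma + it)|^{-1}
\leq \prod_{\chi \in X(K)} \max_{1/2 \leq \sigma \leq 2} |L(\sigma + it, \chi^{*})|^{-1}.
\]

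Next I would observe that, since $K \subset K_{m}$, every $\chi \in X(K)$ is identified with a Dirichlet character modulo $m$, and the associated primitive character $\chi^{*}$ has conductor dividing $m$. In particular, $\{\chi^{*} : \chi \in X(K)\} \subseteq S(m)$ in the notation of Proposition \ref{uniformlowerboundforL}. Applying that proposition with $Q = m \leq (\log T)^{\alpha/4}$, there exists $t \in [T, 2T]$ such that simultaneously for all $\chi^{*} \in S(m)$,
\[
\max_{1/2 \leq \sigma \leq 2} |L(\sigma + it, \chi^{*})|^{-1} \leq \exp\bigl(C \alpha (\log\log(mT))^{2}\bigr).
\]

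For this single $t$, plugging the bound into the previous display yields
\[
\max_{1/2 \leq \sigma \leq 2} |\zeta_{K}(\sigma + it)|^{-1}
\leq \exp\bigl(C \alpha \, \#X(K) \, (\log\log(mT))^{2}\bigr),
\]
which gives the claimed estimate upon taking the minimum over $t \in [T,2T]$. There is no real obstacle here: the only point requiring a moment of care is the interchange of the $\sigma$-maximum with the finite product, but this is immediate by the trivial inequality $\max_\sigma \prod_\chi f_\chi(\sigma) \leq \prod_\chi \max_\sigma f_\chi(\sigma)$ for nonnegative $f_\chi$. The uniformity in $\chi$ built into Proposition \ref{uniformlowerboundforL} is precisely what allows one single $t$ to work for all $\#X(K)$ factors at once, and is what makes the deduction so short.
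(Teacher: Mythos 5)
Your deduction is correct and is exactly the argument the paper intends: it states the corollary as an immediate consequence of Propositions \ref{uniformlowerboundforL} and \ref{relationwithLandD}, i.e.\ factor $\zeta_K$ into the $\#X(K)$ primitive $L$-factors of conductor dividing $m$ and use the uniformity in $\chi$ to bound all factors at one common $t \in [T,2T]$.
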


\begin{remark}
Corollary \ref{abellowerbound} is a consequence for an Abelian number field.
On the other hand, probably, it is difficult to extend Corollary \ref{abellowerbound} 
to any number field.
The reason is that a zero density theorem for Dirichlet $L$-functions 
in the region close to critical line plays an important role in the proof 
of Proposition \ref{uniformlowerboundforL}, but it is difficult to obtain the zero density 
theorem of the same type for Dedekind zeta-functions.
\end{remark}

By Corollary \ref{abellowerbound}, we can obtain the explicit formula for the summatory function 
of the M\"obius function $\mu_{K}$ on an Abelian number field $K$. 
This M\"obius function $\mu_{K}$ is defined by
\begin{align*}
\mu_{K}(\mf{a}) = \l\{
\begin{array}{cl}
1		& \text{if \; $\mf{a} = O_{K}$,}\\
(-1)^{k}	& \text{if \; $\mf{a}$ is the product of $k$ distinct prime ideals,}\\
0		& \text{otherwise.}
\end{array}
\r.
\end{align*}
Here, we define the summatory function $M_{K}^{*}(x)$ by
\begin{align*}
M_{K}^{*}(x) = \psum_{N(\mf{a}) \leq x}\mu_{K}(\mf{a}),
\end{align*}
where the sum on the right hand side runs over integral ideals $\mf{a}$ of the ring $O_K$, and $N(\mf{a})$ is the absolute norm of $\mf{a}$, and
$\sum'$ indicates that if $x$ is an integer, then $\sum'_{N(\mf{a}) \leq x} = \sum_{N(\mf{a}) < x} + \frac{1}{2}\sum_{N(\mf{a}) = x}$.
Then we obtain the following theorem.

\begin{theorem}	\label{exfoMDA}
Let $K$ be an Abelian number field, $K_m$ be the smallest cyclotomic field satisfying $K \subset K_{m}$, 
$x > 0$, $T \geq \max\l\{T_0, \exp\l(m^{1 / 3} \r), 2 / x \r\}$
with $T_{0}$ a sufficiently large absolute constant.
Then, there exist some $T_{\nu} \in [T, 2T]$ satisfying
\begin{align*}
M_{K}^{*}(x) = 
&\sum_{|\gamma| < T_{\nu}}\frac{1}{(m(\rho)-1)!}\lim_{s \rightarrow \rho}\frac{d^{m(\rho)-1}}{ds^{m(\rho)-1}}
			\l((s-\rho)^{m(\rho)}\frac{x^{s}}{\zeta_{K}(s)s}\r)\\
&+\sum_{l=0}^{\infty}\Res_{s = -l}\l(\frac{x^{s}}{\zeta_K(s)s}\r) + R'.
\end{align*}
Here, $R'$ satisfies the estimate
\begin{align}	\label{R'_bound}
R'
\ll &\frac{x}{T}\min\l\{e^{n_K / x}(\log(x + 2))^{n_K}, \kappa_K \log(x + 2) + \frac{\Phi_{0}(K)}{(\log(x + 2))^{-1} + 1 / n_K}\r\} \\ \nonumber
&+ \exp\l( C n_K (\#X(K))(\log{\log{T}})^2 \r)
+ a_{n_x}\min\l\{1, \frac{x}{ T |x - n_x| } \r\},
\end{align}
where $n_K$ is the degree of $K$, $\kappa_K$ is the residue of $\zeta_K$ at $s = 1$, and 
$\Phi_0(K)$ is a constant depending only on K such that
\begin{align*}
\l| \sum_{N(\mf{a}) \leq x} 1 - \kappa_K x \r| \leq \Phi_{0}(K)x^{1 - 1 / n_K}.
\end{align*}
In addition, let $n_x$ denote one of the nearest positive integer from $x$ 
other than $x$ itself such that there exist ideals 
$\mf{a} \subset O_K$ with $N(\mf{a}) = n_x$ and $\mu_{K}(\mf{a}) \not= 0$.
If there exist several such integers, then we understand that $n_x$ is the one 
that $a_{n_{x}}$ is the biggest among them.
Moreover, we have
\begin{align*}
\Res_{s = 0}\l( \frac{x^{s}}{\zeta_K(s)s} \r)=
-\frac{2^{r_1 + r_2}\pi^{r_2} (\log{x})^{r_1 + r_2 - 1}}{ |d_K|^{1/2} \kappa_K} 
+ O_{K}\l( (1 - \delta_{0, r_1 + r_2 - 1})|\log{x}|^{r_1 + r_2 - 2} \r),
\end{align*}
and
\begin{align*}
\Res_{s = -l}\l( \frac{x^{s}}{\zeta_K(s)s} \r)
\ll \l\{
\begin{array}{ll}
\d{\frac{C^{n_K}x^{-l}}{l^{n_K / 2 + 1}}\l(\frac{2 \pi e}{l}\r)^{n_K l}(\log(x + 3))^{r_1 + r_2 - 1}}	&\text{if \; $l$ is even,}\\
\d{(1 - \delta_{0, r_2})\frac{C^{n_K}x^{-l}}{l^{n_K / 2 + 1}}\l(\frac{2 \pi e}{l}\r)^{n_K l}(\log(x + 3))^{r_2 - 1}}	&\text{if \; $l$ is odd}
\end{array}
\r.
\end{align*}
for $l \in \ZZ_{\geq 1}$, where $r_1$ is the number of real embeddings, $2r_2$ is the number of complex embeddings, 
and $\delta_{i, j}$ is the Kronecker delta.
In particular, we have
\begin{align*}
\sum_{l=0}^{\infty}\Res_{s = -l}\l(\frac{x^{s}}{\zeta_K(s)s}\r)
\sim -\frac{2^{r_1 + r_2}\pi^{r_2} (\log{x})^{r_1 + r_2 - 1}}{ |d_K|^{1/2} \kappa_K}  \quad (x \rightarrow +\infty).
\end{align*}
\end{theorem}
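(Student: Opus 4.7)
The plan is to derive the explicit formula by applying a truncated Perron-type formula to the Dirichlet series $\sum_{\mf{a}}\mu_K(\mf{a})N(\mf{a})^{-s} = 1/\zeta_K(s)$ and then shifting the contour leftward past all poles of $x^s/(\zeta_K(s)s)$. I would first write
$$M_K^*(x) = \frac{1}{2\pi i}\int_{c-iT_\nu}^{c+iT_\nu}\frac{x^s}{\zeta_K(s)s}\,ds + \mathcal{E}_P,$$
with $c = 1 + 1/\log x$ and $T_\nu \in [T,2T]$ selected via Corollary \ref{abellowerbound}. The Perron error $\mathcal{E}_P$ produces the first and last pieces of \eqref{R'_bound}: the $\frac{x}{T}\min\{\cdots\}$ contribution uses the Landau-type ideal-counting estimate $|\sum_{N(\mf{a})\le x}1 - \kappa_K x| \le \Phi_0(K)x^{1-1/n_K}$ together with the crude bound $\sum_{N(\mf{a})\le x}1 \ll (\log(x+2))^{n_K}$ usable for small $x$, while the $a_{n_x}\min\{1,x/(T|x-n_x|)\}$ piece captures the boundary fluctuation from ideals with norm close to $x$. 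The hypothesis $T \ge \exp(m^{1/3})$ ensures $m \le (\log T)^{\alpha/4}$ for some admissible $\alpha \ge 13$, so Corollary \ref{abellowerbound} supplies a $T_\nu$ with $\max_{1/2\le\sigma\le 2}|\zeta_K(\sigma + iT_\nu)|^{-1} \le \exp(Cn_K(\#X(K))(\log\log T)^2)$.

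Next, I would shift the contour to $\Re s = -U$ and let $U \to \infty$. Since $\zeta_K$ has a simple pole at $s=1$, the integrand is regular there, and the residues collected are exactly those at the non-trivial zeros $\rho$ of $\zeta_K$ with $|\gamma| < T_\nu$, at $s=0$, and at the trivial zeros $s=-l$ for $l \ge 1$. The horizontal integrals along $\Im s = \pm T_\nu$ on $[1/2, c]$ are controlled directly by Corollary \ref{abellowerbound}; on $[-U, 1/2]$ they are handled by the functional equation $\zeta_K(s) = X_K(s)\zeta_K(1-s)$, which reflects the problem back to $[1/2, 1+U]$, combined with standard convexity estimates. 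Together these contribute the $\exp(Cn_K(\#X(K))(\log\log T)^2)$ term to \eqref{R'_bound}. The functional equation also supplies, via Stirling, both the vanishing of the vertical integral on $\Re s = -U$ as $U \to \infty$ and the absolute convergence of $\sum_{l\ge 1}\Res_{s=-l}$.

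For the explicit evaluation of the residues, I would use the functional equation together with the class number formula $\kappa_K = 2^{r_1}(2\pi)^{r_2}h_KR_K/(w_K|d_K|^{1/2})$, which gives $\zeta_K(s) = -(h_K R_K/w_K)\,s^{r_1+r_2-1}(1+O(s))$ near $s=0$. Substituting into $x^s/(\zeta_K(s)s)$ and extracting the coefficient of $s^{-1}$ from the resulting Laurent expansion of order $r_1+r_2$ yields the leading $(\log x)^{r_1+r_2-1}$ term with the stated coefficient, along with an $O_K(|\log x|^{r_1+r_2-2})$ remainder (absent when $r_1+r_2=1$, whence the factor $1-\delta_{0,r_1+r_2-1}$). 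At $s=-l$ with $l \ge 1$, the order of the zero of $\zeta_K$ is $r_1+r_2$ for even $l$ and $r_2$ for odd $l$, the latter forcing the $(1-\delta_{0,r_2})$ factor when $r_2 = 0$; applying Stirling to the Gamma factors produced by the functional equation gives the decay $(2\pi e/l)^{n_K l}/l^{n_K/2+1}$. Since this series is $O(x^{-1})$, the dominant contribution is the residue at $s=0$, giving the asymptotic $\sim -\frac{2^{r_1+r_2}\pi^{r_2}(\log x)^{r_1+r_2-1}}{|d_K|^{1/2}\kappa_K}$.

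The main obstacle is the simultaneous sharpness of all the contour-shift estimates: one must choose a single $T_\nu \in [T,2T]$ along which the horizontal integrals and the Perron error combine cleanly into the single bound \eqref{R'_bound}, which relies essentially on the averaged lower bound of Corollary \ref{abellowerbound} rather than on any pointwise bound for $|\zeta_K|^{-1}$. The bookkeeping at $s=0$ is also delicate, since extracting the $(\log x)^{r_1+r_2-1}$ coefficient from a Laurent expansion of order $r_1+r_2$ depends on several cancellations controlled by the class number formula and the Gamma-factor structure of the functional equation.
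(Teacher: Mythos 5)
Your proposal is correct and follows essentially the same route as the paper: Perron's formula with $T_\nu$ supplied by Corollary \ref{abellowerbound}, a contour shift far to the left controlled by the functional equation and Stirling (the paper's Lemmas \ref{basic_bound} and \ref{RDO}), and Leibniz/Laurent extraction of the residues at $s=0$ and at the trivial zeros, your class-number-formula computation at $s=0$ being equivalent to the paper's direct use of the functional equation. One minor correction: the first branch of the minimum in \eqref{R'_bound} comes from the Euler-product bound $\zeta_K(\sigma_0)\le\zeta(\sigma_0)^{n_K}$ applied to the Dirichlet series at $\sigma_0$, not from a counting bound $\sum_{N(\mathfrak{a})\le x}1\ll(\log(x+2))^{n_K}$, which is false for large $x$ since that count is $\sim\kappa_K x$.
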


Here we do not consider refined upper bounds of $\Phi_{0}(K)$, but it is studied by Murty and Order in \cite{MOE}.

Moreover, as one more application of Proposition \ref{uniformlowerboundforL}, 
there are the following results for the sum involving derivative functions.
The following some theorems are the generalization of the result 
in the case of the Riemann zeta-function by Garaev and Sankaranarayanan \cite{GS}.

\begin{theorem}	\label{onemomentforLunconditional}
Let $\chi$ be a primitive Dirichlet character modulo $q$, and assume the simplicity of all  complex zeros of $L(s, \chi)$.
Then, for $T > \exp\l( q^{1/3} \r)$, there exist some $T_{\nu} \in [T, 2T]$ satisfying
\begin{align*}
\sum_{0 < \gamma < T_{\nu}}\frac{1}{L'(\rho, \chi)} = \frac{T_{\nu}}{2 \pi} + O\l(\exp\l(C (\log{\log{T}})^2 \r) + C(\chi)\r),
\end{align*}
where the sum on the left hand side runs over non-trivial zeros $\rho = \beta + i\gamma$ of $L(s, \chi)$, 
and $C(\chi)$ is a sufficiently large constant depending only on $\chi$.
Moreover, for any $T > T_{0}(q) > 0$ with a sufficiently large constant $T_{0}(q)$ depending only on $q$, we have
\begin{align}	\label{prGHHforL}
\sum_{0 < \gamma \leq T}\frac{1}{|L'(\rho, \chi)|} \gg T.
\end{align}
In particular, we also have
\begin{align}	\label{prGHHforLL}
\frac{1}{\varphi(q)}\sum_{\chi \bmod{q}}\sum_{0 < \gamma \leq T}\frac{1}{|L'(\rho, \chi)|}
\gg T.
\end{align}
\end{theorem}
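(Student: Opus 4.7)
The plan is to evaluate $\sum_{0<\gamma<T_\nu}1/L'(\rho,\chi)$ by integrating $1/L(s,\chi)$ around a positively oriented rectangle $\mathcal{R}$ that encloses precisely the non-trivial zeros of $L(s,\chi)$ with ordinate in $(0,T_\nu)$. Under the simple-zero hypothesis the residue theorem gives
\[
\frac{1}{2\pi i}\oint_{\mathcal{R}}\frac{ds}{L(s,\chi)}
=\sum_{0<\gamma<T_\nu}\frac{1}{L'(\rho,\chi)},
\]
and the task reduces to estimating the four side integrals. Concretely, I take $\mathcal{R}$ with vertices $-\tfrac12+i\delta,\;2+i\delta,\;2+iT_\nu,\;-\tfrac12+iT_\nu$, where $\delta=\delta(\chi)>0$ is fixed so small that no non-trivial zero of $L(s,\chi)$ has ordinate in $(0,\delta]$, and $T_\nu\in[T,2T]$ is supplied by Proposition~\ref{uniformlowerboundforL}. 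Since $T>\exp(q^{1/3})$ forces $q\le(\log T)^{3}$, that proposition applies with $\alpha=13$ and $Q=q$, producing a $T_\nu$ for which both $|L(\sigma+iT_\nu,\chi)|^{-1}$ and $|L(\sigma+iT_\nu,\bar{\chi})|^{-1}$ are $\le\exp(C(\log\log T)^{2})$ throughout $\sigma\in[\tfrac12,2]$.

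On the right side $\sigma=2$ the Dirichlet series $1/L(2+it,\chi)=\sum_n\mu(n)\chi(n)n^{-2-it}$ converges absolutely, and term-by-term integration in $t\in[\delta,T_\nu]$ yields the contribution $T_\nu-\delta$ from $n=1$ and $O(1)$ from $n\ge2$ (via $\int_\delta^{T_\nu}e^{-it\log n}\,dt\ll1/\log n$), producing the main term $T_\nu/(2\pi)$. On the top side $t=T_\nu$, Proposition~\ref{uniformlowerboundforL} controls $\sigma\in[\tfrac12,2]$ directly, and for $\sigma\in[-\tfrac12,\tfrac12]$ I would combine the functional equation, which via Stirling gives $|L(\sigma+iT_\nu,\chi)|\asymp q^{1/2-\sigma}T_\nu^{1/2-\sigma}|L(1-\sigma-iT_\nu,\bar{\chi})|$, with the uniform lower bound of Proposition~\ref{uniformlowerboundforL} applied to $\bar{\chi}$ on $[\tfrac12,2]$; the factor $T_\nu^{\sigma-1/2}\le1$ is harmless, and the whole top integral is $\ll\exp(C(\log\log T)^{2})$. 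On the left side $\sigma=-\tfrac12$ the same functional equation together with the absolute convergence of $1/L(\tfrac32-it,\bar{\chi})$ gives $|L(-\tfrac12+it,\chi)|^{-1}\ll_{q}|t|^{-1}$ for $|t|\gg1$, so that piece is $O_{\chi}(\log T_\nu)$; and the bottom side at height $\delta$ is $O(C(\chi))$ by continuity. Combining the four pieces delivers the stated asymptotic.

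For the lower bound \eqref{prGHHforL}, I apply the asymptotic with $T$ replaced by $T/2$, obtaining some $T_\nu'\in[T/2,T]$ for which $\bigl|\sum_{0<\gamma<T_\nu'}1/L'(\rho,\chi)\bigr|\ge T_\nu'/(4\pi)$ once $T\ge T_0(q)$ makes the error term subordinate; the triangle inequality then yields
\[
\sum_{0<\gamma\le T}\frac{1}{|L'(\rho,\chi)|}\ge\sum_{0<\gamma<T_\nu'}\frac{1}{|L'(\rho,\chi)|}\gg T.
\]
The character-averaged bound \eqref{prGHHforLL} follows by executing the same argument for each primitive character modulo $q$, and for imprimitive $\chi$ with inducing primitive character $\chi^{*}$ by transferring the lower bound through the relation $L'(\rho,\chi)=L'(\rho,\chi^{*})F_{q,\chi^{*}}(\rho)$ at zeros $\rho$ of $L(s,\chi^{*})$, together with the trivial estimate $|F_{q,\chi^{*}}(\rho)|\le2^{\omega(q)}$.

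The main obstacle is the uniform control of $|L(s,\chi)|^{-1}$ along the top side of $\mathcal{R}$: no unconditional \emph{individual} bound as strong as $\exp(C(\log\log T)^{2})$ is available for a generic height, and only the freedom to choose $T_\nu\in[T,2T]$ (with uniformity in $\chi$ modulo $q$) afforded by Proposition~\ref{uniformlowerboundforL} makes the analysis unconditional. Any weaker input would force an error term of power-of-$\log T$ size and would swamp the main term $T_\nu/(2\pi)$ before the lower bound \eqref{prGHHforL} could be extracted.
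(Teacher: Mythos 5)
Your proposal is correct and follows essentially the same route as the paper: a rectangular contour for $1/L(s,\chi)$ from a zero-free height $\delta(\chi)$ up to a good height $T_\nu$ supplied by Proposition \ref{uniformlowerboundforL}, the Dirichlet series on $\sigma=2$ giving the main term $T_\nu/2\pi$, Lemma \ref{RLO} (functional equation plus Stirling) handling the strip $\sigma<1/2$, and the triangle inequality yielding \eqref{prGHHforL}. The only cosmetic difference is your left edge at $\sigma=-1/2$ (giving an $O_{\chi}(\log T)$ piece, absorbable into $\exp(C(\log\log T)^2)+C(\chi)$) versus the paper's choice $\sigma=-3/4$, which makes that vertical integral $O(1)$ outright.
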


We can obtain a theorem for $\zeta_K(s)$ with an Abelian number field $K$, 
similar to Theorem \ref{onemomentforLunconditional}.
It is the following result.

\begin{theorem}	\label{onemomentforADunconditional}
Let $K$ be an Abelian number field, $K_m$ be the smallest cyclotomic field satisfying $K \subset K_{m}$, 
and assume the simplicity of all complex zeros of $\zeta_K(s)$. 
Then, for $T > \exp\l( m^{1/3} \r)$, there exist some $T_{\nu} \in [T, 2T]$ satisfying
\begin{align*}
\sum_{0 < \gamma < T_{\nu}}\frac{1}{\zeta_{K}'(\rho)} 
= \frac{T_{\nu}}{2 \pi} + O\l(\exp\l(C (\#X(K)) (\log{\log{T}})^2 \r) + C(K)\r),
\end{align*}
where $X(K)$ is the same as in Proposition \ref{relationwithLandD}, 
the sum on the left hand side runs over non-trivial zeros $\rho = \beta + i\gamma$ of $L(s, \chi)$, 
and $C(K)$ is a sufficiently large constant depending only on $K$.
In particular, for any $T \geq T_{0}(K) > 0$ with a sufficiently large constant $T_{0}(K)$ depending only on $K$, we have
\begin{align}	\label{prGHHforD}
\sum_{0 < \gamma \leq T}\frac{1}{|\zeta_{K}'(\rho)|} \gg T.
\end{align}
\end{theorem}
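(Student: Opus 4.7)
The plan is to mirror the proof of Theorem~\ref{onemomentforLunconditional}, now with $\zeta_K$ in place of $L(s,\chi)$ and Corollary~\ref{abellowerbound} in place of Proposition~\ref{uniformlowerboundforL}. Under the simplicity hypothesis, each non-trivial zero $\rho$ of $\zeta_K$ is a simple pole of $1/\zeta_K(s)$ with residue $1/\zeta_K'(\rho)$; hence Cauchy's residue theorem yields
\begin{align*}
\sum_{0 < \gamma < T_{\nu}}\frac{1}{\zeta_K'(\rho)} = \frac{1}{2\pi i}\oint_{\mathcal{R}}\frac{ds}{\zeta_K(s)} + O(C(K)),
\end{align*}
where $\mathcal{R}$ is the positively oriented rectangle with vertices $a + i\delta$, $a + iT_{\nu}$, $-c + iT_{\nu}$, $-c + i\delta$, with $a = 1 + (\log T)^{-1}$, $c > 0$ a small absolute constant chosen so that the trivial zero at $s=0$ lies outside $\mathcal{R}$, and $\delta > 0$ small enough that every non-trivial zero $\rho$ with $0 < \gamma < T_{\nu}$ is enclosed. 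Any finitely many $K$-dependent low-lying or real exceptional zeros are absorbed into the $C(K)$ term.

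The key step is to choose $T_{\nu} \in [T, 2T]$ via Corollary~\ref{abellowerbound}, so that
\begin{align*}
\max_{\frac{1}{2} \leq \sigma \leq 2} |\zeta_K(\sigma + iT_{\nu})|^{-1} \leq \exp\left(C (\#X(K))(\log\log T)^2\right);
\end{align*}
the hypothesis $T \geq \exp(m^{1/3})$ secures $m \leq (\log T)^{\alpha/4}$ for any $\alpha \geq 13$, so the corollary applies. I then evaluate the four edges of $\mathcal{R}$. On the right edge $\sigma = a > 1$, the Dirichlet series $1/\zeta_K(s) = \sum_{\mathfrak{a}} \mu_K(\mathfrak{a}) N(\mathfrak{a})^{-s}$ converges absolutely; the term $\mathfrak{a} = O_K$ contributes
\begin{align*}
\frac{1}{2\pi i}\int_{a+i\delta}^{a+iT_{\nu}} 1 \cdot ds = \frac{T_{\nu} - \delta}{2\pi},
\end{align*}
giving the main term $T_{\nu}/(2\pi)$, while the remaining oscillatory terms integrate to $O(1)$. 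On the top edge $t = T_{\nu}$, the above bound, extended to $-c \leq \sigma < \tfrac12$ via the functional equation (the Gamma factors there force $|1/\zeta_K|$ to be only smaller), yields a total $O(\exp(C(\#X(K))(\log\log T)^2))$. The left edge $\sigma = -c$ contributes $O(1)$ through the polynomial decay of $|\zeta_K(-c+it)|^{-1}$ in $|t|$ produced by the functional equation, and the bottom edge $t = \delta$ is $O(C(K))$. Summing these four gives the claimed asymptotic.

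The lower bound \eqref{prGHHforD} follows by applying the triangle inequality to the first part: since $T_{\nu} \geq T$ dominates both $\exp(C(\#X(K))(\log\log T)^2) = (\log T)^{O(\log\log T)}$ and $C(K)$ for $T \geq T_{0}(K)$,
\begin{align*}
\sum_{0 < \gamma \leq T} \frac{1}{|\zeta_K'(\rho)|} \geq \left|\sum_{0 < \gamma < T_{\nu}} \frac{1}{\zeta_K'(\rho)}\right| = \frac{T_{\nu}}{2\pi} + O\left(\exp(C(\#X(K))(\log\log T)^2) + C(K)\right) \gg T.
\end{align*}
I expect the main obstacle to lie in the treatment of the left and top edges: one must invoke the functional equation for $\zeta_K$ to extend the bound from $\tfrac12 \leq \sigma \leq 2$ (the range of Corollary~\ref{abellowerbound}) to the full range $-c \leq \sigma \leq 2$, and carefully verify that the $\Gamma$-factors only improve matters without introducing new contributions on $\mathcal R$. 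Once this is in place the computation parallels the proof of Theorem~\ref{onemomentforLunconditional} step for step, with the extra factor $\#X(K)$ in the error naturally arising from the factorisation $\zeta_K = \prod_{\chi \in X(K)} L(s, \chi^*)$ of Proposition~\ref{relationwithLandD}.
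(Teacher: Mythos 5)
Your proposal follows exactly the route the paper intends: the paper omits this proof precisely because it is the same contour argument as Theorem~\ref{onemomentforLunconditional}, with Corollary~\ref{abellowerbound} (together with the functional equation, Lemma~\ref{RDO}) replacing Proposition~\ref{uniformlowerboundforL}, and your outline reproduces it — choose $T_{\nu}\in[T,2T]$ with $|\zeta_K(\sigma+iT_{\nu})|^{-1}\leq\exp(C(\#X(K))(\log\log T)^2)$, integrate $1/\zeta_K$ around a rectangle, extract $T_{\nu}/2\pi$ from the leading term of the Dirichlet series on the right edge, and bound the other three edges.

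Two details need adjusting. First, the left abscissa $-c$ with $c$ small: the functional equation gives $|\zeta_K(-c+it)|^{-1}\ll_K |t|^{-n_K(1/2+c)}$, and this is integrable in $t$ only when $n_K(1/2+c)>1$, which fails for $K=\QQ$ (allowed in the statement). Take $c=3/4$, as the paper does in the Dirichlet case; note also that your worry about the trivial zero at $s=0$ is moot, since the rectangle has $t\geq\delta>0$ while all zeros of $\zeta_K$ with $\Re s\leq 0$ are real, so any such $c$ keeps the contour free of poles of $1/\zeta_K$. (Similarly, with $a=1+1/\log T$ the non-principal terms on the right edge contribute $O_K(\log\log T)$ rather than $O(1)$; this is harmless, or simply take $a=2$ as in the paper.) Second, your final display is not valid as written: since $T_{\nu}\in[T,2T]$ may exceed $T$, the sum over $0<\gamma<T_{\nu}$ contains zeros with $\gamma>T$, so $\sum_{0<\gamma\leq T}1/|\zeta_K'(\rho)|\geq\bigl|\sum_{0<\gamma<T_{\nu}}1/\zeta_K'(\rho)\bigr|$ need not hold. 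Run the first part with $T/2$ in place of $T$ (so that $T_{\nu}\leq T$), or conclude $\sum_{0<\gamma\leq 2T}1/|\zeta_K'(\rho)|\gg T$ and relabel, exactly as in the paper's proof of Theorem~\ref{onemomentforLunconditional}. With these routine fixes the argument is complete and coincides with the intended one.
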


We omit the proof of Theorem \ref{onemomentforADunconditional} because 
the proof is almost the same as the proof of Theorem \ref{onemomentforLunconditional} 
by using Corollary \ref{abellowerbound}.

Here we mention some comments on \eqref{prGHHforLL} and \eqref{prGHHforD}.  
These results are useful when we consider the exact behavior of some partial sum of some
M\"obius functions under a conjecture.
In fact, estimates \eqref{prGHHforLL} and \eqref{prGHHforD} can be applied to show 
some $\Omega$-results for certain summatory functions of the M\"obius functions 
under the Linear Independence Conjecture.
Here the Linear Independence Conjecture is the following conjecture.

\begin{conjecture*}[Linear Independence Conjecture for Dedekind zeta-functions (cf. \cite{CRH})]
The positive imaginary parts of the zeros of any Dedekind zeta-function are linearly independent over $\QQ$.
\end{conjecture*}

Note that the Linear Independence Conjecture for Dedekind zeta-functions implies 
the same type conjecture for Dirichlet $L$-functions by Proposition \ref{relationwithLandD}.

Now by estimates \eqref{prGHHforLL} and \eqref{prGHHforD}, as extension of Ingham's theorem \cite{In}, we can obtain that, for $(a, q) = 1$,
\begin{align*}
\underset{x \rightarrow \infty}{\underline{\overline{\lim}}}\frac{M(x; q, a)}{x^{1/2}} = \pm \infty,
\end{align*}
and, for any Abelian number field $K$, 
\begin{align}	\label{EID}
\underset{x \rightarrow \infty}{\underline{\overline{\lim}}}\frac{M_{K}(x)}{x^{1/2}} = \pm \infty
\end{align}
under the Linear Independence Conjecture for Dedekind zeta-functions. These proofs are similar to the proof of Corollary 15.7 in \cite{MV}.
In addition, we can remove the condition ``Abelian'' in \eqref{EID} by assuming 
the Riemann Hypothesis for Dedekind zeta-functions.

\begin{remark}
We can generalize the above two theorems to the statement which is analogous
to the Landau-Gonek formula (cf. \cite[Proposition 2]{BHMS} and \cite[Theorem 1]{GL}), 
i.e. for some $T_{\nu} \in [T, 2T]$,
\begin{align}	\label{ALGF}
\sum_{0 < \gamma < T_{\nu}}\frac{x^{\rho}}{L'(\rho, \chi)}, \sum_{0 < \gamma < T_{\nu}}\frac{x^{\rho}}{\zeta_K'(\rho)}
\end{align}
are estimated by a little modified asymptotic formula with the original Landau-Gonek formula under the simple zero conjecture for the corresponding function.
Moreover, if the Riemann Hypothesis for the corresponding function 
$F(s) (=L(s, \chi) \; \mathrm{or} \; \zeta_K(s))$ and $|F(\rho)|^{-1} \ll |\rho|^{1 - \delta}$ for some fixed constant $\delta > 0$ are also true, 
then we have an analogue of the Landau-Gonek formula for \eqref{ALGF} for any sufficiently large $T > 0$. 
We only mention this fact here because 
the author cannot find some useful applications of these consequences.
\end{remark}

Here the author raises the following conjecture suggested by the above results.

\begin{conjecture}
Let $\chi$ be a primitive Dirichlet character, and $K$ be an Abelian number field.
Then
\begin{align*}
\sum_{0 < \gamma \leq T}\frac{1}{L'(\rho, \chi)}, \sum_{0 < \gamma \leq T}\frac{1}{\zeta_{K}'(\rho)}
\sim \frac{T}{2\pi} \qquad (T \rightarrow \infty).
\end{align*}
\end{conjecture}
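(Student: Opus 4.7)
The plan is to upgrade the sparse-height asymptotic of Theorem \ref{onemomentforLunconditional} to an all-$T$ asymptotic. Recall that the admissible $T_\nu \in [T,2T]$ in that theorem is produced by pigeonholing a mean value of $|L(\sigma+it,\chi)|^{-1}$ across an interval of length $T$ (the core of Proposition \ref{uniformlowerboundforL}). My first step would be to shrink this interval from $[T,2T]$ to $[T,T+T^\eta]$ for some fixed small $\eta>0$, by replacing the pigeonhole input with a short-interval mean-value estimate, using an Ingham-type zero-density bound for $L(s,\chi)$ in strips close to the critical line. Carrying this sharper input through the contour-shift argument behind Theorem \ref{onemomentforLunconditional} should produce a \emph{dense} sequence of good heights satisfying
\[
 \sum_{0<\gamma\leq T_\nu}\frac{1}{L'(\rho,\chi)}=\frac{T_\nu}{2\pi}+O\!\bigl(\exp(C(\log\log T)^2)\bigr),
\]
with gaps $O(T^\eta)$.

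Given such a dense sequence, the second step is to show that the partial sum $S(T):=\sum_{0<\gamma\leq T}1/L'(\rho,\chi)$ does not swing by more than $o(T)$ between consecutive good heights. For a general $T$, pick the nearest good $T_\nu \in [T,T+T^\eta]$ and decompose $S(T)=S(T_\nu)-\sum_{T<\gamma\leq T_\nu}1/L'(\rho,\chi)$. The task reduces to bounding the remainder over a window of length $T^\eta$. A short-interval form of the Gonek-Hejhal conjecture at $\lambda=-\tfrac12$, namely
\[
 \sum_{T<\gamma\leq T+H}|L'(\rho,\chi)|^{-1}\ll H(\log T)^{5/4}\qquad (H=T^\eta),
\]
would render the remainder $O(T^\eta(\log T)^{5/4})=o(T)$ and finish the proof.

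The main obstacle is precisely this short-interval input. Even its long-interval version is the Gonek-Hejhal conjecture at $\lambda=-\tfrac12$, which is open; a single zero $\rho$ on which $|L'(\rho,\chi)|$ is anomalously small in the window $(T,T+T^\eta]$ already destroys the interpolation. Overcoming this requires quantitative repulsion between zeros of $L(s,\chi)$ coupled with lower bounds for $|L'|$ at those zeros, which is beyond the unconditional methods used in this paper. The same three-step plan carries over to $\zeta_K$ for Abelian $K$ via Corollary \ref{abellowerbound} and Theorem \ref{onemomentforADunconditional}, but the first step is itself harder when $\#X(K)$ is large, because short-interval zero-density results for Dedekind zeta-functions are weaker than the corresponding bounds for individual Dirichlet $L$-functions, as already noted in the Remark following Corollary \ref{abellowerbound}.
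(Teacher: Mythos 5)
The statement you are trying to prove is not a theorem of the paper but a conjecture raised by the author: the paper gives no proof of it, and only remarks that in the special case of the Riemann zeta-function it follows from the Riemann Hypothesis, the simple zero conjecture, and the bound $|\zeta'(\rho)|^{-1}\ll|\rho|^{1/3+\epsilon}$, by the method of Theorem 15.6 in \cite{MV} (i.e.\ a contour integration valid at \emph{every} height once one has a pointwise lower bound for $|\zeta'|$ at the zeros, rather than an interpolation between special heights). Your proposal therefore cannot be measured against a proof in the paper; what matters is whether it closes the gap unconditionally, and it does not.

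Concretely, both steps of your plan rest on inputs that are not available. The decisive one, which you yourself flag, is the short-interval bound $\sum_{T<\gamma\leq T+H}|L'(\rho,\chi)|^{-1}\ll H(\log T)^{5/4}$ with $H=T^{\eta}$: even the long-interval version at $\lambda=-\tfrac12$ is precisely the Gonek--Hejhal conjecture, and no unconditional lower bound on $|L'(\rho,\chi)|$ at individual zeros is known, so a single anomalous zero in the window ruins the interpolation. But the first step is also not a routine refinement: the good heights $T_\nu$ in Theorem \ref{onemomentforLunconditional} come from Proposition \ref{uniformlowerboundforL}, whose engine is Montgomery's zero-density bound (Lemma \ref{ZDBM}) for the full range $|\Im\rho|\leq T$; pigeonholing inside $[T,T+T^{\eta}]$ would require a zero-density estimate near $\sigma=\tfrac12$ that is nontrivial in a window of length $T^{\eta}$, and no such short-interval density theorem of the required strength is known (the exceptional-rectangle count $CT(\log(QT))^{-\alpha/2}$ from the paper's argument already exceeds the number of rectangles in a short window). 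So the proposal reduces the conjecture to two open statements rather than proving it, and the statement remains, as in the paper, conjectural; the paper's own (conditional) route in the zeta case is different in spirit, trading your dense-heights interpolation for an assumed pointwise bound $|\zeta'(\rho)|^{-1}\ll|\rho|^{1/3+\epsilon}$ plus RH, which allows the contour argument to be run at arbitrary $T$ directly.
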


We can prove this conjecture in the case of the Riemann zeta-function under some known conjectures 
that are the Riemann Hypothesis, the simple zero conjecture and the estimate $|\zeta'(\rho)|^{-1} \ll |\rho|^{1/3 + \epsilon}$.
In fact, we can obtain the following asymptotic formula
\begin{align*}
\sum_{0 < \gamma \leq T}\frac{1}{\zeta'(\rho)} = \frac{T}{2 \pi} + O_{\epsilon}\l(T^{1/3 + \epsilon}\r)
\end{align*}
under these conjectures.
The present paper does not give the proof of this estimate because it is almost similar to the proof of Theorem 15.6 in \cite{MV}.

\section{\textbf{On estimates of Dirichlet $L$-functions in certain domains
}}


In this section, we are going to show some estimates of Dirichlet $L$-functions including Proposition \ref{uniformlowerboundforL}.
Firstly, we refer to an important result on the zero density theorem for Dirichlet $L$-functions by Montgomery.

\begin{lemma}[Theorem 1 in \cite{MZD}] \label{ZDBM}
Let $S(Q)$ denote the set of all primitive Dirichlet characters modulo $q$ with $q \leq Q$.
For $Q \geq 1, T \geq 2$, and $\frac{1}{2} \leq \s \leq 1$, we have
\begin{align*}
\sum_{\chi \in S(Q)}N_{\chi}(\s, T)
\ll (Q^2 T)^{\frac{3(1 - \s)}{2- \s}}(\log(QT))^{13},
\end{align*}
where $\sum_{\chi}^{*}$ denotes a sum over all primitive Dirichlet characters modulo $q$, 
and $N_{\chi}(\s, T)$ is the number of zeros $\rho$ of $L(s, \chi)$ with $\Re(\rho) \geq \s$ and $|\Im(\rho)| \leq T$.
\end{lemma}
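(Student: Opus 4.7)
The plan is to follow Montgomery's original strategy, combining a mollified $L$-function identity with the hybrid large sieve for primitive Dirichlet characters and the functional equation. First, I would reduce to counting a well-spaced subset $\mathcal{Z}$ of zero-character pairs $(\chi_{j}, \rho_{j})$: standard counting via Jensen's formula in unit disks shows that each $L(s,\chi)$ with $\chi \in S(Q)$ has $O(\log(qT))$ zeros in any rectangle of unit height, so at the cost of a $\log(QT)$ factor one may assume the ordinates of zeros attached to the same character are pairwise separated by $\geq 1$.

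Second, I would exploit the mollifier $M_{X}(s,\chi) = \sum_{n \leq X}\mu(n)\chi(n) n^{-s}$. Writing $L(s,\chi) M_{X}(s,\chi) = 1 + \sum_{n > X} c_{n}\chi(n) n^{-s}$ with $|c_{n}| \leq d(n)$, inserting the smooth weight $e^{-n/Y}$ via $\frac{1}{2\pi i}\int_{(2)}\Gamma(w) Y^{w}\, dw$, and shifting the contour across the residue at $w = 0$ (which vanishes because $L(\rho,\chi) = 0$) yields
\[
e^{-1/Y} = -\sum_{n > X} c_{n}\chi(n) n^{-\rho} e^{-n/Y} + \frac{1}{2\pi i}\int_{(c)} L(\rho + w, \chi) M_{X}(\rho + w, \chi) \Gamma(w) Y^{w}\, dw.
\]
Applying the functional equation on the shifted line transforms the integrand into a conjugate Dirichlet polynomial in $\bar{\chi}$ multiplied by the $\e$-factor $\tau(\chi) q^{1/2 - s}$, so that for each $(\chi,\rho) \in \mathcal{Z}$ at least one of two Dirichlet polynomials---a primal one of length $\asymp Y$ and a dual one of length $\asymp q^{2}/Y$---has modulus $\gg 1$.

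Third, I would apply the hybrid large sieve
\[
\sum_{q \leq Q} \frac{q}{\varphi(q)} \sum_{\chi}^{*} \sum_{j} \Bigl|\sum_{n \leq N} a_{n}\chi(n) n^{-\rho_{j}}\Bigr|^{2} \ll (Q^{2} T + N) \sum_{n \leq N} |a_{n}|^{2} n^{-2\s}
\]
to both polynomials, combined with $\sum_{X < n \leq Y} d(n)^{2} n^{-2\s} e^{-2n/Y} \ll Y^{1 - 2\s}(\log Y)^{O(1)}$ obtained from Rankin's trick. The resulting estimate
\[
|\mathcal{Z}| \ll \bigl((Q^{2} T + Y) Y^{1 - 2\s} + q^{2(1-\s)}(Q^{2} T + q^{2}/Y)(q^{2}/Y)^{1 - 2\s}\bigr)(\log(QT))^{O(1)}
\]
is then optimized over the free parameters $X$, $Y$; the choice $Y \asymp (Q^{2}T)^{1/(2 - \s)}$, together with the $q^{2(1-\s)}$ contribution supplied by the functional equation, produces the exponent $\frac{3(1-\s)}{2 - \s}$. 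Collecting the logarithmic losses from the well-spacing reduction, Rankin's trick, the convexity bound used on the shifted contour, and the large sieve aggregates to at most $(\log(QT))^{13}$.

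The main obstacle is extracting the factor $3$ in the numerator: a one-sided mollifier argument alone yields only the weaker Ingham-type exponent $2(1-\s)$, so one must carefully exploit the functional equation to produce a second detector polynomial on the dual side and balance it against the primal contribution. Keeping track of the logarithmic powers at each step, which is what ultimately pins down the exponent $13$, is the other delicate piece of bookkeeping.
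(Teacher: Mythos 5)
The paper does not prove this statement at all: the lemma is imported verbatim as Theorem 1 of Montgomery \cite{MZD} and used as a black box in the proof of Proposition \ref{main_proposition}, so there is no internal argument to compare yours with. What you have written is an attempt to reconstruct Montgomery's own proof, and in outline (passing to well-spaced zero representatives at the cost of a logarithm, the mollified detector $e^{-1/Y} = -\sum_{n>X}c_n\chi(n)n^{-\rho}e^{-n/Y} + \text{contour term}$, the hybrid large sieve over primitive characters and well-spaced ordinates, use of the functional equation on the shifted line, and the optimization $Y\asymp (Q^2T)^{1/(2-\sigma)}$, whose primal term indeed gives $(Q^2T)^{3(1-\sigma)/(2-\sigma)}$) this is the correct route: it is essentially Montgomery's argument.

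The dual-side bookkeeping, however, does not close as stated. The conductor of $L(s,\chi)$ at height $|t|\le T$ is $\asymp q|t|$, so the reflected sum coming from the functional equation has length about $qT/Y$ (about $Q^2T/Y$ in aggregate), not $q^2/Y$; moreover it is a sum in $m^{\rho-1}$, so its mean square carries the weight $N_*^{2\sigma-1}$ rather than $N_*^{1-2\sigma}$, and the squared scaling factor is $(q|t|)^{1-2\sigma}$, not $q^{2(1-\sigma)}$. With the dual term exactly as you display it the optimization fails: at $\sigma=\tfrac12$, for instance, it is $\asymp q\,Q^2T\approx Q^3T$, exceeding the claimed bound $(Q^2T)^{3(1-\sigma)/(2-\sigma)}=Q^2T$ by a factor $Q$. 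In Montgomery's (and Ingham's) actual treatment the contour term is not converted into a literal dual polynomial: one bounds it by $Y^{1/2-\beta}$ times an integral of $\bigl|L\bigl(\tfrac12+i(\gamma+u),\chi\bigr)\bigr|\,\bigl|M_X\bigl(\tfrac12+i(\gamma+u),\chi\bigr)\bigr|$ against a rapidly decaying weight, and then applies H\"older together with the hybrid fourth moment $\sum_{q\le Q}\sum_{\chi}^{*}\int_{-T}^{T}\bigl|L\bigl(\tfrac12+it,\chi\bigr)\bigr|^4\,dt\ll Q^2T(\log QT)^4$ (itself obtained from the approximate functional equation plus the large sieve) and the mean square of $M_X$; it is this step that genuinely produces the exponent $\frac{3(1-\sigma)}{2-\sigma}$ and, with the spacing and divisor-sum losses, the power $(\log QT)^{13}$. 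Your closing claim that the logarithmic losses ``aggregate to at most $(\log(QT))^{13}$'' is asserted rather than derived, but that is secondary; the substantive gap is the estimate of the dual contribution.
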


By using this lemma, we show the following proposition.

\begin{proposition} \label{main_proposition}
Let $\a \geq 13, T \geq T_{0}(\a) > 0$ with $T_{0}(\a)$ a sufficiently large number depending 
only on $\a$, and $1 \leq Q \leq (\log{T})^{\a / 4}$.
Then there exists a closed interval $J_{0}$ of length $(\log(QT))^{\a / 3}$ contained in $[T, 2T]$ such that
\begin{align*}
\underset{\chi \in S(Q)}{\max_{\s \geq 1/2 + 14 \a r, t \in J_0}}\l|\log{L(s, \chi)}\r| 
\ll \a \log{\log(QT)},
\end{align*}
where $r = (\log{\log(QT)})^2(\log(Q^2 T))^{-1}$.
\end{proposition}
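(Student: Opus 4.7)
The argument splits naturally into a zero-density step and an analytic estimation step. In the first step I would fix an auxiliary constant $C_1\in(0,14)$ and set $\sigma_1=\tfrac12+C_1\a r$. Applying Lemma~\ref{ZDBM} at $\sigma=\sigma_1$ and expanding the exponent,
\[
\frac{3(1-\sigma_1)}{2-\sigma_1}=1-\tfrac{4C_1\a r}{3}+O((\a r)^2),
\]
together with the identity $r\log(Q^2T)=(\log\log(QT))^2$ and the hypothesis $Q\le(\log T)^{\a/4}$, a short computation yields
\[
\sum_{\chi\in S(Q)}N_\chi(\sigma_1,2T)\ll T\exp\!\bigl(-c\a(\log\log(QT))^2\bigr)
\]
for some absolute $c>0$. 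This bound is much smaller than $T/(\log(QT))^{\a/3}$ once $T$ is large, so partitioning $[T,2T]$ into disjoint closed sub-intervals of length $(\log(QT))^{\a/3}$ and pigeonholing the imaginary parts of those zeros $\rho$ of some $L(s,\chi)$ (with $\chi\in S(Q)$) having $\Re\rho\ge\sigma_1$ produces a sub-interval $J_0$ such that the rectangle $\mathcal R=[\sigma_1,2]\times J_0$ is free of zeros of every $L(s,\chi)$, $\chi\in S(Q)$.

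For the second step, since $\mathcal R$ is zero-free, $\log L(s,\chi)$ admits a single-valued analytic branch in $\mathcal R$, normalised by $\log L(2+it,\chi)=O(1)$ via the absolutely convergent Dirichlet series for $\log L$. Combined with the convexity bound $\log|L(\sigma+it,\chi)|\ll\log(QT)$ for $\sigma\ge\tfrac12$, I would apply the Borel--Carath\'eodory inequality together with Hadamard's three-circles theorem on disks centred at $s_0=2+it_0$ for $t_0$ in the interior of $J_0$, with radii carefully tuned to the buffer strip $[\sigma_1,\sigma_0]$ of width $(14-C_1)\a r$, where $\sigma_0=\tfrac12+14\a r$. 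The goal is the bound $|\log L(s,\chi)|\ll\a\log\log(QT)$ uniformly on $[\sigma_0,2]\times J_0'$, with $J_0'$ a sub-interval of $J_0$ shortened by $O(1)$ at each end (negligible relative to the total length $(\log(QT))^{\a/3}$).

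The main obstacle is producing the sharp constant $\a\log\log(QT)$ in the second step: a single naive application of Borel--Carath\'eodory with the convexity bound as input yields only $|\log L|\ll\log(QT)/(\a r)$, which is too weak by a logarithmic factor. Recovering the advertised bound requires bootstrapping through an iterated three-circles scheme, each pass spending a fraction of the buffer $(14-C_1)\a r$ to gain a $\log\log(QT)$-type saving in the bound for $|\log L|$, until the edge $\sigma=\sigma_0$ is reached. The precise form $r=(\log\log(QT))^2/\log(Q^2T)$ is chosen exactly to balance the zero-density saving of the first step against this analytic loss, and uniformity in $\chi\in S(Q)$ is inherited directly from the corresponding uniformity in Lemma~\ref{ZDBM}.
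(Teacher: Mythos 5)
Your first step (Montgomery's zero-density bound plus pigeonholing to obtain zero-free rectangles of the shape $[\tfrac12+c\a r,2]\times J_0$ inside $[T,2T]$) agrees with the paper. The genuine gap is in your second step: the iterated Borel--Carath\'eodory / Hadamard three-circles bootstrapping cannot deliver the bound $\a\log\log(QT)$ from zero-freeness of a single rectangle. The buffer you have to spend is of total width $O(\a r)$ with $r=(\log\log(QT))^2/\log(Q^2T)$, while the good bound $|\log L|=O(1)$ is only available at distance $\asymp 1$ (near $\s=2$). Any convexity-type interpolation (in one pass or iterated --- the exponents telescope) between a bound of size $\log(QT)$ (or your Borel--Carath\'eodory output $(\log QT)^2(\log\log QT)^{-2}$) at the left edge and $O(1)$ near $\s=2$ improves the bound at a point at distance $\delta$ from the left edge only by a factor of roughly $\exp(-c\,\delta\log\log(QT))$; with $\delta\ll\a r$ this factor is $1+o(1)$, so you remain stuck at essentially $\log(QT)$. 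As a sanity check: even assuming the full Riemann Hypothesis for $L(s,\chi)$, purely local convexity arguments at $\s-\tfrac12\asymp(\log\log QT)^2/\log(Q^2T)$ give only $|\log L|\ll\log(QT)/\log\log(QT)$, an order of magnitude larger than the target $\a\log\log(QT)$. So no tuning of the buffer can rescue this route.

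The missing idea is that $J_0$ is not chosen merely to be zero-free; it is selected by a global moment estimate, following Ramachandra--Sankaranarayanan. The paper writes $\log L=P_1+\tfrac12P_2+\Psi$ with $P_1,P_2$ the prime and prime-square sums, takes $k=[\a\log\log(QT)]$, shows via a smoothed Mellin transform (with weight $e^{-n/X}$, $X=(Q^2T)^{1/4}$, and the contour shift justified by the weak Borel--Carath\'eodory bound on the zero-free rectangles) that $(P_1+\tfrac12P_2)^k$ is an honest Dirichlet polynomial up to $O(1)$ there, and then bounds
\begin{align*}
\sum_{j\in J}\max_{s\in R_j'}|\log L(s,\chi)|^{2k}
\ll r^{-2}(4k)^{2k}Q^2T\log X + \cdots
\end{align*}
by the sub-mean-value property on disjoint small disks and a mean-value theorem for Dirichlet polynomials over the whole range $T\le t\le 2T$. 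Since there are $\gg T(\log(QT))^{-\a/3}$ zero-free rectangles, pigeonholing produces one rectangle on which $\max|\log L|^{2k}\ll r^{-2}(4k)^{2k}(\log X)(\log(QT))^{\a/3}+\cdots$, and taking $2k$-th roots gives $\max|\log L|\ll k\ll\a\log\log(QT)$. This averaging-then-selection mechanism (the bound holds for \emph{some} good $J_0$, not for every zero-free rectangle) is exactly what your proposal lacks, and it cannot be replaced by local convexity arguments inside one rectangle.
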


\begin{proof}
Let $D = \frac{\a}{3}$ and $I_{j} = \l[T + 2(j - 1)(\log(QT))^D, T + 2j(\log(QT))^{D}\r)$. 
By Lemma \ref{ZDBM}, if $\s \geq \frac{1}{2} + \a(\log{\log(QT)})(\log(Q^2T))^{-1}$, then
\begin{align} \label{ZDR}
\sum_{\chi \in S(Q)}N_{\chi}(\s, 2T) - \sum_{\chi \in S(Q)}N_{\chi}(\s, T)
\leq CQ^2T(\log(QT))^{-\a}.
\end{align}
Here $C$ is a sufficiently large absolute constant.
Now we consider the disjoint rectangles 
\begin{align*}
(\s, t) \in R_{j} =  \l[\frac{1}{2} + \a(\log{\log(QT)})(\log(Q^2 T))^{-1}, 2\r] \times I_{j},
\end{align*}
which we may regard subsets in the complex plane.
The number of these rectangles is $N = \l[\frac{1}{2}T(\log(QT))^{-D} \r]$.
By inequality \eqref{ZDR}, if $Q \leq (\log{T})^{\a / 4}$, then the number of zeros of Dirichlet 
$L$-functions attached to primitive characters modulo $q$ with $q \leq Q$ in the rectangle
 $R = \bigsqcup_{j = 1}^{N}R_{j}$ is less than $CT(\log(QT))^{-\a / 2}$. 
Therefore, if $T \geq T_{0}(\a)$ for sufficiently large number $T_{0}(\a)$ depending only
on $\a$, then the number of rectangles $R_{j}$ not having zeros of the Dirichlet $L$-functions
is greater than $N - CT(\log(QT))^{-\a} (\geq N/2)$.

Let $J$ be the set of all $j$ such that $R_{j}$ does not include zeros 
of those Dirichlet $L$-functions.
By using the Euler product for $L(s, \chi)$ and the Taylor expansion, for $\s > 1$, we find that
\begin{align}	\label{def_P_l}
\log{L(s, \chi)} 
&= \sum_{p}\chi(p)p^{-s} + \frac{1}{2}\sum_{p}\chi(p)^{2}p^{-2s} + \sum_{p}\sum_{n = 3}^{\infty}\frac{\chi(p)^{n}}{np^{ns}} \nonumber\\
&=: P_{1}(s, \chi) + \frac{1}{2}P_{2}(s, \chi) + \Psi(s, \chi),
\end{align}
and that $P_{2}(s, \chi)$ is regular on $\s > 1/2$, and $\Psi(s, \chi)$ is regular on $\s > 1/3$ and bounded on $\s \geq 1/2$.
In addition, $\log{L(s, \chi)}$ is regular on $R_{j} (j \in J)$ since $L(s, \chi)$ does not have zero on the same domain.
Hence $P_{1}(s, \chi)$ is analytically continued to $R_{j} (j \in J)$.

Let $k$ be a positive integer. We define $a_{k, \chi}$ by 
\begin{align*}
\l(P_1(s, \chi) + \frac{1}{2}P_2(s, \chi)\r)^{k} = \sum_{n = 1}^{\infty}\frac{a_{k, \chi}(n)}{n^{s}}.
\end{align*}
We can estimate $|a_{k, \chi}(n)|$ by the following way.
If $a_{k, \chi}(n) \not= 0$, then $n$ can be written in the form 
$n = p_{1}^{l_1} \cdots p_{k}^{l_k}$ ($l_i \in \{ 1, 2 \}$).
The number of ways which one can express $n$ by ordering $p_{1}^{l_1}, \dots, p_{k}^{l_k}$
in different ways is at most $k!$.
This means that we have the inequality
\begin{align*}
|a_{k, \chi}(n)| \leq k! \leq k^{k}.
\end{align*}
Hence, by the boundedness of $a_{k, \chi}(n)$ with respect to $n$, 
\begin{align*}
\sum_{n = 1}^{\infty}\frac{a_{k, \chi}(n)}{n^s}\exp\l(- \frac{n}{X} \r)
\end{align*}
is an entire function for any $X > 0$.
Here, as our first step, we show that if $X = \l(Q^{2} T\r)^{1 / 4}$, and 
$k = [\a \log{\log(QT)}]$, then we have
\begin{align} \label{lemoflem1}
\l(P_{1}(s, \chi) + \frac{1}{2}P_{2}(s, \chi)\r)^{k} = \sum_{n = 1}^{\infty}\frac{a_{k, \chi}(n)}{n^{s}}\exp\l( -\frac{n}{X} \r) + O(1)
\end{align}
for any $j \in J$ and $(\s, t)\in R_j'$, where $R_{j}' = [\frac{1}{2} + 14 k(\log{\log(QT)})(\log(Q^2T))^{-1}, 2] \times I_{j}'$ 
with $I_{j}' = [T + 2(j-1)(\log(QT))^{D} + (\log(QT))^2, T + 2j(\log(QT))^{D} - (\log(QT))^2]$.

The proof is as follows.
Using a formula for Abelian weight (cf.\ (5.25) in \cite{MV}) 
and the Stirling formula, if $K = (\log(QT))^2$, then we have
\begin{align*}
&\sum_{n = 1}^{\infty}\frac{a_{k, \chi}(n)}{n^{s}}\exp\l( -\frac{n}{X} \r)
= \frac{1}{2\pi i}\int_{2 - i\infty}^{2 + i\infty}\l( P_1(s + w, \chi) + \frac{1}{2}P_2(s + w, \chi) \r)^{k}\Gamma(w)X^{w}dw\\
&= \frac{1}{2\pi i}\int_{2 - iK}^{2 + iK}\l( P_1(s + w, \chi) + \frac{1}{2}P_2(s + w, \chi) \r)^{k}\Gamma(w)X^{w}dw\\
&\qquad + O\l( k^{k}(\log(QT))^{3}e^{-\pi (\log(QT))^2 / 2}X^2 \r).
\end{align*}
Here we consider the estimate of the integral on the right-hand side.
By the Borel-Carath\'eodry lemma, we can find that
\begin{align*}
|\log{L(s, \chi)}| \leq \frac{2|s - a|}{R - |s - a|}\max_{|s - a| = R}\Re(\log{L(s, \chi)}) + \frac{R + |s - a|}{R - |s - a|}|\log{L(a, \chi)}| 
\end{align*}
holds for $|s - a| < R$ with $R = 2, a = \frac{5}{2} + k(\log{\log(QT)})(\log(Q^2 T))^{-1} + it, t \in {I_{j}}'$.
Hence we have
\begin{align*}
|\log{L(s, \chi)}| \ll (\log(QT))^2(\log{\log(QT)})^{-2}
\end{align*}
for $\s \geq \frac{1}{2} + 2\a k(\log{\log(QT)})(\log(Q^2 T))^{-1}, t \in {I_{j}}'$ since $L(s, \chi) \ll Q|t|$ holds for $\s \geq 1/2$.
By this estimate and the boundedness of $\Psi(s, \chi)$ for $\s \geq 1/2$, we have
\begin{align}	\label{IE1}
\l| P_1(s, \chi) + \frac{1}{2}P_2(s, \chi) \r|^{k} 
&= \l| \log{L(s, \chi)} - \Psi(s, \chi) \r|^{k} \nonumber\\
&\leq C^{k}(\log(QT))^{2k}(\log{\log(QT)})^{-k}
\end{align}
for $\s \geq \frac{1}{2} + 2\a k(\log{\log(QT)})(\log(Q^2 T))^{-1}, t \in {I_{j}}'$.
In addition, by the residue theorem, if $\beta = 12 k(\log{\log(QT)})(\log(Q^2 T))^{-1}$, then
\begin{align*}
&\frac{1}{2\pi i}\int_{2 - iK}^{2 + iK}\l( P_1(s + w, \chi) + \frac{1}{2}P_2(s + w, \chi) \r)^{k}\Gamma(w)X^{w}dw = \\
&\frac{1}{2\pi i}\l( \int_{-\beta + iK}^{2 + iK} + \int_{-\beta - iK}^{-\beta + iK} + \int_{2 - iK}^{-\beta - iK} \r)
\l( P_1(s + w, \chi) + \frac{1}{2}P_2(s + w, \chi) \r)^{k}\Gamma(w)X^{w}dw\\
& \qquad + \l( P_{1}(s, \chi) + \frac{1}{2}P_{2}(s, \chi) \r)^k
\end{align*}
holds for $\s \geq \frac{1}{2} + 14 k (\log{\log(QT)})(\log(Q^2 T))^{-1}, t \in I_{j}'$. 
By inequality \eqref{IE1} and the Stirling formula, we have
\begin{align*}
&\frac{1}{2\pi i}\int_{-\beta \pm iK}^{2 \pm iK}
\l( P_1(s + w, \chi) + \frac{1}{2}P_2(s + w, \chi) \r)^{k}\Gamma(w)X^{w}dw\\
&\ll C^{k}(\log(QT))^{2k+3}(\log{\log(QT)})^{-2k}e^{-\frac{\pi}{2}(\log(QT))^{2}}X^2, 
\end{align*}
and
\begin{align*}
&\frac{1}{2\pi i}\int_{-\beta - iK}^{-\beta + iK}\l( P_1(s + w, \chi) + \frac{1}{2}P_2(s + w, \chi) \r)^{k}\Gamma(w)X^{w}dw\\
&\ll \beta^{-1}X^{-\beta}C^{k}(\log(QT))^{2k}(\log{\log(QT)})^{-2k}.
\end{align*}
From the above estimates, if $X = (Q^2T)^{1 / 4}$, and $k = [\a \log{\log(QT)}]$, then we obtain the formula \eqref{lemoflem1}.

Next we consider the function
\begin{align*}
F_{2k}(T, \chi) := \sum_{j \in J}\max_{s \in {R_j}'}|\log{L(s, \chi)}|^{2k}.
\end{align*}
By \eqref{def_P_l} and \eqref{lemoflem1}, we have
\begin{align*}
|\log{L(s, \chi)}|^{2k} 
&\leq 2^{2k}\l( |P_1(s, \chi) + \frac{1}{2}P_2(s, \chi)|^{2k} + |\Psi(s, \chi)|^{2k} \r)\\
&\leq 4^{2k}\l| \sum_{n = 1}^{\infty}\frac{a_{k, \chi}(n)}{n^{s}}\exp\l( -\frac{n}{X} \r) \r|^2 + C^{2k}.
\end{align*}
Therefore, if $r = (\log{\log(QT)})^{2}(\log(Q^2 T))^{-1}$, and $s_j$ is an element of $R_{j}'$ satisfying
\begin{align*}
\max_{s \in R_j'}|\log{L(s, \chi)}|^{2k} = |\log{L(s_j, \chi)}|^{2k},
\end{align*}
then we have
\begin{align*}
|\log{L(s_j, \chi)}|^{2k}
&\ll 4^{2k}\l| \sum_{n = 1}^{\infty}\frac{a_{k, \chi}(n)}{n^{s_{j}}}\exp\l( -\frac{n}{X} \r) \r|^2 + C^{2k}\\
&= \frac{4^{2k}}{\pi r^{2}}\l| \int \int_{|s - s_j| \leq r}\l(\sum_{n = 1}^{\infty}\frac{a_{k, \chi}(n)}{n^{s}}\exp\l( -\frac{n}{X} \r)\r)^2d\s dt \r|
+ C^{2k}\\
&\ll \frac{4^{2k}}{r^{2}}\int \int_{|s - s_j| \leq r}\l| 
\sum_{n = 1}^{\infty}\frac{a_{k, \chi}(n)}{n^{s}}\exp\l( -\frac{n}{X} \r) \r|^2 d\s dt + C^{2k}
\end{align*}
by the mean value theorem on analytic functions.
By the disjointness of the domains $|s - s_j| \leq r$ for each $j$ and the estimate
\begin{align*}
&\l|\sum_{n > X^2}\frac{a_{k, \chi}(n)}{n^{s}}\exp\l( -\frac{n}{X} \r)\r|
\leq k^k\sum_{n > X^2}\frac{1}{n^{1/2}}\exp\l( -\frac{n}{X} \r) \\
&= k^k\sum_{m = 0}^{\infty}\sum_{2^{m}X^2 < n \leq 2^{m+1}X^2}\frac{1}{n^{1/2}}\exp\l( -\frac{n}{X} \r)
\leq k^k \sum_{m = 0}^{\infty}2^{\frac{m}{2}}X\exp\l( -2^{m}X \r)\\
&\leq k^k X\l( \exp(-X) + \sum_{m = 1}^{\infty}\l(2^{1/2}e^{-X} \r)^{m} \r)
\ll k^{k}X\exp(-X),
\end{align*}
we have
\begin{align*}
F_{2k}(T, \chi) 
\ll &\frac{4^{2k}}{r^2}\int\int_{E}\l| \sum_{n \leq X^2}\frac{a_{k, \chi}(n)}{n^{s}}\exp\l( -\frac{n}{X} \r) \r|^2d \s d t \\
&+ \l(C^{2k} + r^{-2}(4k)^{2k} X^2 \exp(-2X) \r)N,
\end{align*}
where $E$ is the domain with $\frac{1}{2} \leq \s \leq 2, T \leq t \leq 2T$.
As for the remaining integral, we have
\begin{align*}
&\l|\sum_{n \leq X^2}\frac{a_{k, \chi}(n)}{n^{s}}\exp\l( -\frac{n}{X} \r) \r|^2
= \sum_{m, n \leq X^2}\frac{a_{k, \chi}(m)\bar{a_{k, \chi}(n)}}{m^{\s+it}n^{\s-it}}\exp\l( -\frac{m+n}{X} \r)\\
&=\l(\underset{m \not= n}{\sum_{m, n \leq X^2}} + \underset{m = n}{\sum_{m, n \leq X^2}}\r)
\frac{a_{k, \chi}(m)\bar{a_{k, \chi}(n)}}{m^{\s+it}n^{\s-it}}\exp\l( -\frac{m+n}{X} \r)\\
&=\underset{m \not= n}{\sum_{m, n \leq X^2}}\frac{a_{k, \chi}(m)\bar{a_{k, \chi}(n)}}{m^{\s+it}n^{\s-it}}\exp\l( -\frac{m+n}{X} \r)
+O\l(k^{2k}\sum_{n \leq X^2}\frac{1}{n^{2\s}} \r),
\end{align*}
and
\begin{align*}
&\int\int_{E}\l| \sum_{n \leq X^2}\frac{a_{k, \chi}(n)}{n^{s}}\exp\l( -\frac{n}{X} \r) \r|^2ds\\
&=\underset{m \not= n}{\sum_{m, n \leq X^2}}a_{k, \chi}(m)\bar{a_{k, \chi}(n)}\exp\l( -\frac{m+n}{X} \r)
\l(\int_{\frac{1}{2}}^{2}\frac{d\s}{(mn)^{\s}}\r)\l( \int_{T}^{2T}\l( \frac{n}{m} \r)^{it}dt \r)\\
& \qquad + O\l( k^{2k} Q^2T \log{X} \r)\\
& \ll k^{2k}\underset{m \not= n}{\sum_{m, n \leq X^2}}\l| \log\l( \frac{n}{m} \r) \r|^{-1}\frac{1}{(mn)^{1/2}}
+ k^{2k} Q^2 T \log{X} 
\ll k^{2k}X^4 + k^{2k} Q^2 T \log{X}.
\end{align*}
Hence we find that
\begin{align*}
\int\int_{E}\l| \sum_{n \leq X^2}\frac{a_{k, \chi}(n)}{n^{s}}\exp\l( -\frac{n}{X} \r) \r|^2ds
\ll k^{2k} Q^2 T \log{X}
\end{align*}
by $X = \l(Q^2T\r)^{1/4}$.
Thus we have
\begin{align*}
F_{2k}(T, \chi) 
\ll r^{-2}(4k)^{2k} Q^2 T\log{X} + \l(C^{2k} + r^{-2}(4k)^{2k} X^2 \exp(-2X) \r)N,
\end{align*}
and there exists a $j_0 \in J$ satisfying
\begin{align*}
\max_{s \in R_{j_0}'}|\log{L(s, \chi)}|^{2k} 
\ll 	& r^{-2} (4k)^{2k} (\log{X}) (\log(QT))^{\a/3}\\
	&+ C^{2k} + r^{-2}(4k)^{2k} X^2 \exp(-2X)
\end{align*}
by the definition of $F$.

From the above discussion, the inequality
\begin{align*}
\max_{s \in R_{j_0}'}|\log{L(s, \chi)}| 
\ll k \ll \a \log{\log(QT)}
\end{align*}
holds uniformly for $\chi \in S(Q)$, which completes the proof of Proposition \ref{main_proposition}.
\end{proof}

The following corollary is an immediate consequence of Proposition \ref{main_proposition}.

\begin{corollary}	\label{uniesconL}
We have
\begin{align}	\label{estimate_1}
\underset{\chi \in S(Q)}{\max_{\s \geq 1/2 + 14 \a r, t \in J_0}}\l|L(s, \chi)\r|^{\pm} 
\leq \exp\l(C \a \log{\log(QT)}\r) = (\log(QT))^{C \a},
\end{align}
where $C$ is a positive absolute constant, and the meaning of the other letters 
appearing in the formula is the same as in Proposition \ref{main_proposition}.
\end{corollary}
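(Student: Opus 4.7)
The plan is simply to exponentiate the estimate proved in Proposition \ref{main_proposition}. By that proposition, there is a closed interval $J_0 \subset [T, 2T]$ of length $(\log(QT))^{\alpha/3}$ such that for every primitive character $\chi \in S(Q)$ and every $s = \sigma + it$ with $\sigma \geq 1/2 + 14 \alpha r$ and $t \in J_0$,
\[
|\log L(s, \chi)| \leq C_1 \alpha \log\log(QT),
\]
for some absolute constant $C_1$. Implicit in this estimate is that $L(s, \chi)$ does not vanish on the specified region, so that the branch of $\log L(s, \chi)$ used in the proof of Proposition \ref{main_proposition} (the analytic continuation of the Dirichlet series representation from $\sigma > 1$) is well-defined throughout $R_{j_0}'$.

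Once the logarithmic bound is in hand, the statement of Corollary \ref{uniesconL} is a purely formal consequence. For either sign $\pm$ one has
\[
|L(s, \chi)|^{\pm 1} = \exp\bigl(\pm \Re \log L(s, \chi)\bigr) \leq \exp\bigl(|\log L(s, \chi)|\bigr) \leq \exp\bigl(C_1 \alpha \log\log(QT)\bigr),
\]
and the rightmost quantity equals $(\log(QT))^{C_1 \alpha}$ by the identity $\exp(\log\log(QT)) = \log(QT)$. Taking the maximum over $\chi \in S(Q)$ and $(\sigma, t)$ in the stated range gives \eqref{estimate_1} with $C = C_1$.

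There is no substantive obstacle here, as the entire content of the corollary is packed into Proposition \ref{main_proposition}; the only thing to check is that the region $\sigma \geq 1/2 + 14 \alpha r$, $t \in J_0$ is simultaneously zero-free for every $L(\cdot, \chi)$ with $\chi \in S(Q)$, but this is precisely what was arranged in the construction of the "good" rectangles $R_j$ (and the shrunken sub-rectangles $R_j'$) in the proof of Proposition \ref{main_proposition}. Consequently, no further argument beyond exponentiation is required.
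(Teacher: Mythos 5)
Your proposal is correct and matches the paper, which treats the corollary as an immediate consequence of Proposition \ref{main_proposition}: one simply notes $|L(s,\chi)|^{\pm 1} = \exp\l(\pm\Re\log L(s,\chi)\r) \leq \exp\l(|\log L(s,\chi)|\r)$ and applies the bound $\ll \a\log\log(QT)$ uniformly over $\chi \in S(Q)$ and the stated region. Your additional remark that the region is zero-free (so the logarithm is well-defined) is exactly what the construction of the rectangles in the proof of Proposition \ref{main_proposition} provides, so nothing further is needed.
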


\begin{lemma}	\label{RLO}
Let $\chi$ be a primitive Dirichlet character modulo $q$. 
If $|t| \geq 1$ and $\s \leq \frac{1}{2}$, then
\begin{align}	\label{RLO1}
|L(s, \chi)| 
\asymp \l( \frac{2\pi e}{q|s|} \r)^{\s}(q|s|)^{1/2}\exp\l( |t|\tan^{-1}\l( \frac{1 - \s}{|t|} \r) \r)|L(1 - s, \bar{\chi})|.
\end{align}
If $|t| \leq 1, \s = -(m + 1 / 2)$, then
\begin{align}	\label{RLO2}
|L(s, \chi)| 
\asymp \l( \frac{2\pi e}{q|s|} \r)^{\s}(q|\s|)^{1/2}|L(1 - s, \bar{\chi})|.
\end{align}
\end{lemma}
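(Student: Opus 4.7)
The plan is to reduce the lemma to Stirling's asymptotic via the functional equation for primitive Dirichlet $L$-functions. In asymmetric form, for $\chi$ primitive modulo $q$ of parity $\kappa\in\{0,1\}$ (so that $\chi(-1)=(-1)^\kappa$), Legendre's duplication formula applied to the Gamma ratio in the symmetric functional equation $\Lambda(s,\chi)=W(\chi)\Lambda(1-s,\bar\chi)$ yields
\[
L(s,\chi) \;=\; W(\chi)\,(2\pi)^s\pi^{-1} q^{1/2-s}\,\Gamma(1-s)\,S_\kappa(s)\,L(1-s,\bar\chi),
\]
where $|W(\chi)|=1$ and $S_0(s)=\sin(\pi s/2)$, $S_1(s)=\cos(\pi s/2)$. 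Taking absolute values reduces the lemma to estimating $|\Gamma(1-s)|$ and $|S_\kappa(s)|$ in the two prescribed regimes.

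For \eqref{RLO1}, the constraint $\sigma\leq 1/2$ forces $\Re(1-s)\geq 1/2$, so Stirling applies uniformly and gives
\[
|\Gamma(1-s)| \;\asymp\; \sqrt{2\pi}\,|1-s|^{1/2-\sigma} e^{\sigma-1}\exp\!\left(-|t|\tan^{-1}\!\tfrac{|t|}{1-\sigma}\right).
\]
A direct expansion yields $|S_\kappa(s)|^2 = S_\kappa(\sigma)^2+\sinh^2(\pi t/2)$, so for $|t|\geq 1$ one has $|S_\kappa(s)|\asymp e^{\pi|t|/2}$. The identity $\tan^{-1}(x)+\tan^{-1}(1/x)=\pi/2$ then merges the two exponentials into $\exp(|t|\tan^{-1}((1-\sigma)/|t|))$, and the identity $|1-s|^2=|s|^2+1-2\sigma$ combined with $\sigma\leq 1/2$ gives $|1-s|\asymp|s|$ uniformly. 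Reassembling the powers of $q$, $|s|$, $2$, $\pi$, $e$ produces the target form $(2\pi e/(q|s|))^\sigma(q|s|)^{1/2}$.

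For \eqref{RLO2}, the half-integer choice $\sigma=-(m+1/2)$ makes $|S_\kappa(\sigma)|=1/\sqrt{2}$, so $|S_\kappa(s)|^2=1/2+\sinh^2(\pi t/2)$ is bounded above and below by absolute constants on $|t|\leq 1$. Stirling applied to $\Gamma(1-s)=\Gamma(m+3/2-it)$ gives $|\Gamma(1-s)|\asymp\sqrt{2\pi}\,|1-s|^{m+1} e^{-(m+3/2)}$, and $|s|\asymp|\sigma|\asymp m+1/2\asymp|1-s|$. Collecting constants delivers the claim after noting that $1/2-\sigma=m+1$.

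The main technical subtlety lies in the uniformity as $\sigma\to-\infty$ in the first case: one must verify that replacing $|1-s|^{1/2-\sigma}$ by $|s|^{1/2-\sigma}$ introduces only a bounded multiplicative error. Since $(|1-s|/|s|)^2=1+(1-2\sigma)/|s|^2$ and the constraint $|t|\geq 1$ forces $|s|^2\geq 1+\sigma^2$, the exponent $(1/2-\sigma)\log(|1-s|/|s|)$ equals $O((1-2\sigma)^2/|s|^2)=O(1)$ uniformly on the stated region. All remaining manipulations are elementary trigonometric and logarithmic bookkeeping, together with the explicit identification of absolute constants.
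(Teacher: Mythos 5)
Your proposal is correct and follows essentially the same route as the paper, which simply invokes the asymmetric functional equation $L(s,\chi)=\epsilon(\chi)L(1-s,\bar\chi)2^{s}\pi^{s-1}q^{1/2-s}\Gamma(1-s)\sin\l(\tfrac{\pi}{2}(s+\kappa)\r)$ together with Stirling's formula; you have merely written out the bookkeeping (the bound on $|S_\kappa(s)|$, the $\arctan$ identity, and the uniform replacement of $|1-s|^{1/2-\sigma}$ by $|s|^{1/2-\sigma}$) that the paper leaves implicit. No gap to report.
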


\begin{proof}
By the functional equation for Dirichlet $L$-functions and the Stirling formula, we obtain this lemma.
\end{proof}

\begin{lemma} \label{Rama-Sanka-lemma2}
Let $J_1 = [y_1, y_2]$ be the closed interval that is obtained by removing intervals of length $\log(QT)$ from both ends of $J_{0}$.
Then we have
\begin{align*}
\underset{\chi \in S(Q)}{\max_{\s \geq 1/2 - 26\a r, t \in J_1}}\l| L(s, \chi) \r| 
\leq \exp\l( C \a (\log{\log(QT)})^2 \r),
\end{align*}
where $C$ is a positive absolute constant, and 
the meaning of the letters appearing in the formula 
is the same as in Proposition \ref{main_proposition}.
\end{lemma}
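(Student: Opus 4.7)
The plan is to bound $|L(s,\chi)|$ on the two vertical sides of the strip $1/2 - 26\a r \leq \s \leq 1/2 + 14\a r$ and then transport these bounds to the interior via the maximum modulus principle applied to a suitably damped auxiliary function. Fix a target point $s_0 = \s_0 + it_0$ with $\s_0 \in [1/2 - 26\a r, 1/2 + 14\a r]$ and $t_0 \in J_1$. On the right boundary $\s = 1/2 + 14\a r$, $t \in J_0$, Corollary \ref{uniesconL} yields $|L(s,\chi)| \leq (\log(QT))^{C\a}$ at once. On the left boundary $\s = 1/2 - 26\a r$, $t \in J_0$, I apply Lemma \ref{RLO}: since $|t| \asymp T$ the factor $\exp(|t|\tan^{-1}((1-\s)/|t|))$ is $O(1)$, and the remaining prefactor is essentially $(q|s|)^{1/2 - \s} = (q|s|)^{26\a r}$. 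By the definition $r = (\log\log(QT))^{2}(\log(Q^{2}T))^{-1}$, this satisfies $(q|s|)^{26\a r} \leq \exp(C\a(\log\log(QT))^{2})$. The residual $|L(1-s,\bar{\chi})|$ is rewritten via $L(\bar w,\bar{\chi}) = \overline{L(w,\chi)}$ as $|L((1/2 + 26\a r) + it, \chi)|$, and since $26\a r \geq 14\a r$, Corollary \ref{uniesconL} applies again and bounds it by $(\log(QT))^{C\a}$. Combining, $|L(s,\chi)| \leq \exp(C\a(\log\log(QT))^{2})$ on the left boundary.

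To deal with the horizontal parts of the boundary, I introduce the auxiliary function
\[
g(s) = L(s,\chi) \exp\!\l( \g(s - i t_0)^{2} \r),
\]
with $\g > 0$ a fixed absolute constant. Since $|\exp(\g(s - it_0)^{2})| = \exp(\g(\s^{2} - (t - t_0)^{2}))$ decays like $\exp(-\g(t - t_0)^{2})$ in $|t - t_0|$, and $t_0 \in J_1$ forces $|t - t_0| \geq \log(QT)$ on the top and bottom of the rectangle $\mathcal{R} = [1/2 - 26\a r, 1/2 + 14\a r] \times J_0$, the standard convexity bound $|L(s,\chi)| \ll (q(|t|+1))^{O(1)}$ ensures that $|g(s)|$ is exponentially small in $(\log(QT))^{2}$ on those horizontal sides. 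On the two vertical sides of $\mathcal{R}$ the factor $\exp(\g \s^{2})$ is $O(1)$ and does not spoil the bounds obtained above. Therefore $|g(s)| \leq \exp(C\a(\log\log(QT))^{2})$ on all of $\partial \mathcal{R}$.

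Since $L(s,\chi)$ is analytic in $\mathcal{R}$ (the only candidate singularity, the pole of $\zeta$ at $s = 1$, lies outside), so is $g$, and the maximum modulus principle propagates the boundary bound throughout $\mathcal{R}$. Evaluating at $s_0$ and dividing out the bounded factor $\exp(\g \s_0^{2})$ yields $|L(s_0,\chi)| \leq \exp(C\a(\log\log(QT))^{2})$, uniformly in $\chi \in S(Q)$. The main technical subtlety lies in the left-boundary step: the numerical constant $26$ is chosen so that the polynomial growth $(q|s|)^{1/2 - \s}$ from the functional equation stays within $\exp(O(\a(\log\log(QT))^{2}))$, while simultaneously $26\a r \geq 14\a r$ so that Corollary \ref{uniesconL} can be applied to the reflected argument; shrinking $J_0$ to $J_1$ by the amount $\log(QT)$ provides exactly enough room for the damping factor $\exp(\g(s-it_0)^{2})$ to kill the trivial growth of $L$ on the horizontal sides.
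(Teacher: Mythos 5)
Your proposal is correct and takes essentially the same route as the paper: the functional equation (Lemma~\ref{RLO}) combined with Proposition~\ref{main_proposition}/Corollary~\ref{uniesconL} gives the bound on the line $\s = 1/2 - 26\a r$, $t \in J_0$, and a Gaussian-damped auxiliary function together with the maximum modulus principle transfers the boundary bounds to the interior, the $\log(QT)$ buffer between $J_1$ and $J_0$ making the trivial bound harmless on the horizontal sides. The only difference is organizational: you damp around an arbitrary $t_0 \in J_1$ and apply the maximum modulus principle once on the full rectangle, while the paper centers the damped functions $g_l(w) = L(s_l + w, \chi)e^{w^2}$ at the endpoints of $J_1$ and then applies the maximum modulus principle a second time; in both arguments the range $\s \geq 1/2 + 14\a r$ is covered directly by Corollary~\ref{uniesconL}.
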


\begin{proof}
For $\s \geq 1 / 2 + 14 \a r, t \in J_{0}$, we have
\begin{align*}
|\log{L(s, \bar{\chi})}| 
&\geq \log|L(s, \bar{\chi})|
\asymp \log|(q t)^{1/ 2 - \s}L(1 - s, \chi)|\\
&\geq \log|L(1 - s, \chi)| - \l(\s -\frac{1}{2} \r)\log(qt)
\end{align*}
since $|L(s, \chi)| \asymp (q t)^{1/2 - \s}|L(1 - s, \bar{\chi})|$ by Lemma \ref{RLO}.
In addition, if $\chi$ is a primitive character, then $\bar{\chi}$ is also a primitive character. Therefore, for $t \in J_{0}$, we have
\begin{align*}
\log\l|L\l( \frac{1}{2} - 26\a r + it, \chi \r)\r|
&\ll \a r \log(QT) + \l|\log\l(L\l(\frac{1}{2} + 26\a r + it, \bar{\chi} \r)\r)\r|\\
&\ll \a (\log{\log(QT)})^2
\end{align*}
by Proposition \ref{main_proposition}.
Hence, if $t \in J_{0}$, then
\begin{align} \label{estimate_2}
L\l( \frac{1}{2} - 26\a r + it, \chi \r)
\ll \exp \l(C \a (\log{\log(QT)})^2\r).
\end{align}

Next we consider the function
\begin{align*}
g_{l}(w) = L(s_l + w, \chi)e^{w^2} \quad (l = 1, 2),
\end{align*}
where $s_l = \frac{1}{2} + iy_{l}$.
By a basic upper bound $L(s, \chi) \ll q (|t| + 1)$ for $\s \geq 1 / 4$, we have
\begin{align*}
g_{l}\l(x \pm i\log(QT)\r)
&\ll e^{x^2 - (\log(QT))^2}|L(s_l + x \pm i\log(QT), \chi)|\\
&\ll e^{x^2 -(\log(QT))^2}QT 
\ll 1
\end{align*}
for $- 26\a r \leq x \leq 26\a r, T \geq T_{0}(\a)$.
Moreover, by estimates \eqref{estimate_1} and \eqref{estimate_2}, we also have
\begin{align*}
g_{l}\l(\frac{1}{2} \pm 26\a r + iy\r) 
&= L\l(s_l \pm 26\a r + iy, \chi \r)e^{(\frac{1}{2} \pm 26\a r + iy)^2}\\
&\ll \exp(C \a (\log{\log(QT)})^2)
\end{align*}
for $-\log(QT) \leq y \leq \log(QT)$.
Hence, by the maximum modulus principle, we obtain
\begin{align*}
g_{l}(x + iy) \ll \exp(C \a (\log{\log(QT)})^2)
\end{align*}
for $-26\a r \leq x \leq 26\a r$ and $-\log(QT) \leq y \leq \log(QT)$.
In particular, if $y = 0$, then
\begin{align*}
L(s_l + x, \chi) 
&= g_{l}(x)e^{x^2}
\ll \exp(C \a (\log{\log(QT)})^2 + x^2)\\
&\ll \exp(C \a (\log{\log(QT)})^2)
\end{align*}
holds for $- 26\a r \leq x \leq 26\a r$.
Again by using the maximum modulus principle, we can find that
\begin{align*}
L(s, \chi) \ll \exp(C \a (\log{\log(QT)})^2)
\end{align*}
in the compact set $\frac{1}{2} - 26\a r \leq \s \leq \frac{1}{2} + 26\a r, t \in J_1$.
\end{proof}

\begin{lemma} \label{Landau_approximatio}
If $f$ is a regular function and
\begin{align*}
\l| \frac{f(s)}{f(s_0)} \r| < e^{M}, \quad (M > 1)
\end{align*}
in $|s - s_0| \leq r$, then for any constant $0 < \epsilon < \frac{1}{2}$, 
\begin{align*}
\frac{f'}{f}(s) = \sum_{|\rho - s_0| \leq \frac{1}{2}r}\frac{1}{s - \rho} + O_{\epsilon}\l( \frac{M}{r} \r)
\end{align*}
in $|s - s_0| \leq \l( \frac{1}{2} - \epsilon \r)r$, where $\rho$ is a zero of $f$.
\end{lemma}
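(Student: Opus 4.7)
The plan is to follow the classical Landau/Borel--Carath\'eodory approach. First I would bound the number of zeros of $f$ inside the disc $|s-s_0|\leq r/2$. Let $\rho_1,\dots,\rho_n$ be those zeros, counted with multiplicity. An application of Jensen's formula to the hypothesis $|f(s)/f(s_0)|\leq e^M$ on $|s-s_0|\leq r$ gives
\[
\sum_{|\rho_k-s_0|\leq r/2}\log\frac{r}{|\rho_k-s_0|} \;\leq\; \frac{1}{2\pi}\int_{0}^{2\pi}\log\left|\frac{f(s_0+re^{i\theta})}{f(s_0)}\right|d\theta \;\leq\; M,
\]
and since each term on the left is at least $\log 2$, this yields $n\ll M$.

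Second, I would introduce the Blaschke-normalised auxiliary function
\[
g(s) \;=\; f(s)\prod_{k=1}^{n}\frac{r^{2}-\overline{(\rho_k-s_0)}(s-s_0)}{r(s-\rho_k)}.
\]
By construction $g$ is analytic and zero-free in the open disc $|s-s_0|<r$ (the poles of the Blaschke factors cancel the listed zeros), each factor has modulus $1$ on the circle $|s-s_0|=r$, and each factor has modulus $\geq 1$ at $s=s_0$. Hence $|g|=|f|$ on the boundary and $|g(s_0)|\geq |f(s_0)|$, so by the maximum principle $|g(s)/g(s_0)|\leq e^{M}$ throughout $|s-s_0|\leq r$. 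Taking $F(s)=\log(g(s)/g(s_0))$ with the branch normalised by $F(s_0)=0$, this gives $\Re F\leq M$ on $|s-s_0|=r$, and Borel--Carath\'eodory applied on concentric discs of radii $r$ and $(1/2-\epsilon/2)r$ yields $|F(s)|\ll_\epsilon M$ on the smaller disc. A Cauchy estimate, shrinking the radius by a further $\epsilon r/2$, then produces
\[
\frac{g'(s)}{g(s)} \;=\; F'(s) \;=\; O_{\epsilon}(M/r) \qquad \text{on } |s-s_0|\leq (1/2-\epsilon)r.
\]

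Finally, I would logarithmically differentiate the defining identity for $g$ to obtain
\[
\frac{f'(s)}{f(s)} \;=\; \frac{g'(s)}{g(s)} + \sum_{k=1}^{n}\frac{1}{s-\rho_k} \;-\; \sum_{k=1}^{n}\frac{\overline{(\rho_k-s_0)}}{r^{2}-\overline{(\rho_k-s_0)}(s-s_0)}.
\]
For $|s-s_0|\leq (1/2-\epsilon)r$ and $|\rho_k-s_0|\leq r/2$ the denominators in the last sum are bounded below by a constant times $\epsilon r^{2}$, so that sum is $O(n/r)=O(M/r)$ by the zero-count from Step 1; combining with the bound on $g'/g$ produces the claimed identity. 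The main obstacle I anticipate is purely bookkeeping: one must verify that the Blaschke normalisation indeed transports the hypothesis $|f/f(s_0)|\leq e^{M}$ into a real-part bound on $\log(g/g(s_0))$ suitable for Borel--Carath\'eodory, and that the absolute constants arising from Borel--Carath\'eodory and the Cauchy estimate combine into the claimed $O_{\epsilon}(M/r)$ without hidden dependence on $M$ or $r$. No deeper analytic obstruction is expected, since every ingredient is a standard complex-analytic lemma.
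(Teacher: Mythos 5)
The paper itself does not prove this lemma at all: its ``proof'' is the single line ``This is Lemma 3 in \cite{RS}''. Your write-up is therefore a reconstruction of the standard Landau-type argument (Jensen's formula to count the zeros near $s_0$, removal of those zeros, Borel--Carath\'eodory, Cauchy's estimate), which is exactly the classical route behind the cited lemma, so in spirit you are doing the right thing.

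There is, however, one genuine flaw in the execution. After forming the Blaschke-normalised function $g$, you assert that $g$ is analytic and \emph{zero-free in the whole open disc} $|s-s_0|<r$, and you then apply Borel--Carath\'eodory to $F=\log(g(s)/g(s_0))$ on the concentric discs of radii $r$ and $(1/2-\epsilon/2)r$. But your Blaschke product only cancels the zeros $\rho_k$ with $|\rho_k-s_0|\le r/2$; any zeros of $f$ in the annulus $r/2<|s-s_0|<r$ survive in $g$, so $F$ need not admit an analytic (single-valued) branch on the disc of radius $r$, and the Borel--Carath\'eodory step as stated breaks down precisely in the situation the lemma is meant for (an $L$-function with many zeros). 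The repair is immediate and should be made explicit: the maximum-modulus argument does give $|g(s)/g(s_0)|\le e^{M}$ throughout $|s-s_0|\le r$, hence in particular $\Re F\le M$ on $|s-s_0|= r/2$, and $g$ \emph{is} zero-free on $|s-s_0|\le r/2$ (the numerators $r^{2}-\overline{(\rho_k-s_0)}(s-s_0)$ vanish only at points of modulus at least $2r$, so they introduce no new zeros there); so apply Borel--Carath\'eodory with outer radius $r/2$ and inner radius $(1/2-\epsilon/2)r$, and then the Cauchy estimate on circles of radius $\epsilon r/2$, to get $F'(s)\ll_{\epsilon} M/r$ on $|s-s_0|\le(1/2-\epsilon)r$. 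The remaining steps are sound: Jensen gives $n\ll M$; the extra sum coming from the Blaschke factors has denominators of modulus at least $\tfrac{3}{4}r^{2}$ (no $\epsilon$ is even needed) and is $O(M/r)$ by the zero count --- though note its sign in the logarithmic derivative should be $+\sum_{k}\overline{(\rho_k-s_0)}\big/\bigl(r^{2}-\overline{(\rho_k-s_0)}(s-s_0)\bigr)$, which of course does not affect the bound.
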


\begin{proof}
This is Lemma 3 in \cite{RS}.
\end{proof}

\begin{proposition}	\label{main_proposition_2}
Let $J_2$ be the closed interval that is obtained by removing intervals of length $40 \a r$ 
from both ends of $J_1$.
If $s_0 = \frac{1}{2} + 14 \a r + it_0, t_0 \in J_2$, then we have
\begin{align}	\label{mpro2}
\frac{L'(s, \chi)}{L(s, \chi)} = \sum_{|\rho - s_0| \leq 20 \a r}\frac{1}{s - \rho} + O\l(\log(QT) \r)
\end{align}
for $|s - s_0| \leq 15 \a r$, where
the meaning of the letters appearing in the following formula 
is the same as the above situations. 
\end{proposition}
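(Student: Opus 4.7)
The plan is to apply Lemma \ref{Landau_approximatio} to $f(s) = L(s, \chi)$ with center $s_0 = \tfrac{1}{2} + 14\alpha r + it_0$ and radius $R := 40\alpha r$. With this choice the conclusion of that lemma, taken with any $\epsilon \leq 1/8$, will read
\[
\frac{L'(s,\chi)}{L(s,\chi)} = \sum_{|\rho - s_0| \leq R/2} \frac{1}{s - \rho} + O\!\left(\frac{M}{R}\right)
\]
on the disk $|s-s_0| \leq (1/2-\epsilon)R$, which contains $|s - s_0| \leq 15\alpha r$, and the sum is over zeros with $|\rho - s_0| \leq 20\alpha r$, matching \eqref{mpro2} exactly. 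So the proof reduces to two tasks: (i) check that $L(\cdot, \chi)$ is holomorphic on the full disk $|s-s_0| \leq R$, and (ii) bound $|L(s,\chi)/L(s_0,\chi)|$ on this disk in the form $e^M$ with $M \ll \alpha (\log\log(QT))^2$.

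For the geometry, the disk $|s - s_0| \leq 40\alpha r$ lies in the horizontal strip $\operatorname{Re}(s) \geq \tfrac{1}{2} + 14\alpha r - 40\alpha r = \tfrac{1}{2} - 26\alpha r$, and its imaginary projection is $[t_0 - 40\alpha r,\, t_0 + 40\alpha r] \subset J_1$ by the very definition of $J_2$ as $J_1$ with intervals of length $40\alpha r$ removed from both ends. Thus the disk is contained in the compact set where Lemma \ref{Rama-Sanka-lemma2} applies, so $L(s,\chi)$ is holomorphic there (in particular, $L$ has no pole since $\chi$ can be assumed nonprincipal for our purposes, and even if $\chi$ is principal, the interval $J_0 \subset [T, 2T]$ avoids $s=1$) and one has the upper bound
\[
|L(s,\chi)| \leq \exp\!\left(C\alpha(\log\log(QT))^2\right).
\]

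For the lower bound at the center, note $s_0$ satisfies $\operatorname{Re}(s_0) = \tfrac{1}{2} + 14\alpha r$ and $t_0 \in J_2 \subset J_0$, so Corollary \ref{uniesconL} gives
\[
|L(s_0,\chi)|^{-1} \leq \exp\!\left(C\alpha \log\log(QT)\right).
\]
Combining the two bounds yields $|L(s,\chi)/L(s_0,\chi)| \leq \exp(M)$ with $M = C\alpha(\log\log(QT))^2$ throughout the disk. Feeding this into Lemma \ref{Landau_approximatio}, the error becomes
\[
\frac{M}{R} \ll \frac{\alpha(\log\log(QT))^2}{\alpha r} = \frac{(\log\log(QT))^2}{r} = \log(Q^2 T) \ll \log(QT),
\]
using the definition $r = (\log\log(QT))^2(\log(Q^2T))^{-1}$. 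This proves \eqref{mpro2}.

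The step that requires a little care is the verification that the enlarged disk $|s-s_0| \leq 40\alpha r$ actually fits inside the region where Lemma \ref{Rama-Sanka-lemma2} supplies its upper bound; this is exactly why the intervals of length $40\alpha r$ were stripped from $J_1$ in defining $J_2$. Once this containment is in hand, everything else is a bookkeeping consequence of the previously established upper and lower bounds for $L(s,\chi)$, so no genuine new estimate is needed.
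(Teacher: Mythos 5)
Your proof is correct and follows essentially the same route as the paper: the lower bound at $s_0$ from Corollary \ref{uniesconL}, the upper bound on the disk $|s-s_0|\leq 40\a r$ from Lemma \ref{Rama-Sanka-lemma2}, and then Lemma \ref{Landau_approximatio} with $\epsilon=\frac{1}{8}$, with the error $M/R \ll (\log\log(QT))^2/r \ll \log(QT)$ handled identically. Your explicit check that the disk's imaginary projection stays inside $J_1$ (which is exactly why $J_2$ is defined by trimming $40\a r$ from each end) is a detail the paper leaves implicit, but it is the same argument.
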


\begin{proof}
By Corollary \ref{uniesconL}, we have
\begin{align*}
|L(s, \chi)| \geq \exp\l( -C \a \log{\log(QT)} \r)
\end{align*}
for $\s \geq \frac{1}{2} + 14 \a r, t \in J_0$.
By this inequality and Lemma \ref{Rama-Sanka-lemma2}, we find that
\begin{align*}
\l|\frac{L(s, \chi)}{L(s_0, \chi)}\r|
\leq \exp\l( C \a(\log{\log(QT)})^2 \r)
\end{align*}
for $|s - s_0| \leq 40 \a r$.
Hence, by Lemma \ref{Landau_approximatio}, if $\epsilon = \frac{1}{8}$, then we obtain
\begin{align*}
\frac{L'(s, \chi)}{L(s, \chi)} 
&= \sum_{|\rho - s_0| \leq 20 \a r}\frac{1}{s - \rho} + O\l( (4 \a r)^{-1} C \a (\log{\log(QT)})^2 \r)\\
&= \sum_{|\rho - s_0| \leq 20 \a r}\frac{1}{s - \rho} + O\l(\log(QT) \r)
\end{align*}
for $|s - s_0| \leq 15 \a r$.
\end{proof}

\begin{lemma} \label{Rama-Sanka-Lemma5}
Let $t_0 \in J_{2}$.
If $s_0 = \frac{1}{2} + 14 \a r + it_{0}$, then
\begin{align*}
\log{L(\s + it_0, \chi)}
=&\sum_{|\rho - s_0| \leq 20 \a r}\l( \log(\s + it_0 - \rho) - \log\l(s_0 - \rho \r) \r)\\
&+ O\l( \a (\log{\log(QT)})^2\r)
\end{align*}
for $\frac{1}{2} - \a r \leq \s \leq \frac{1}{2} + 29 \a r$.
In particular, by taking the real parts in the both sides, we find that
\begin{align}	\label{Rama-Sanka-Lemma5-2}
\log\l|L(\s + it_0, \chi)\r| = \sum_{|\rho - \s_0| \leq 20 \a r}\log\l|\frac{\s + it_0 - \rho}{s_0 - \rho}\r| 
+ O\l( \a (\log{\log(QT)})^2\r).
\end{align}
\end{lemma}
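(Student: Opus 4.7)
The plan is to deduce this lemma by integrating the logarithmic-derivative formula \eqref{mpro2} of Proposition \ref{main_proposition_2} along a path connecting $s_0$ to $\sigma + it_0$, and then adding the value $\log L(s_0,\chi)$. Since $s_0 = \tfrac12 + 14\alpha r + it_0$, the endpoint $\sigma + it_0$ with $\tfrac12 - \alpha r \leq \sigma \leq \tfrac12 + 29\alpha r$ satisfies $|\sigma + it_0 - s_0| \leq 15\alpha r$, so the horizontal segment from $s_0$ to $\sigma + it_0$ lies inside the disk on which Proposition \ref{main_proposition_2} is valid. If this segment should happen to pass through a zero $\rho$, I would deform it slightly (say, by a small semicircular detour) so as to avoid all the finitely many zeros in the disk $|w - s_0| \leq 20\alpha r$; the length of the deformed path is still $O(\alpha r)$.

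With this path fixed, I would write
\begin{align*}
\log L(\sigma + it_0,\chi) - \log L(s_0,\chi)
= \int_{s_0}^{\sigma + it_0} \frac{L'(w,\chi)}{L(w,\chi)}\, dw,
\end{align*}
choosing the branch of $\log L$ consistently with the path. Substituting the expression from Proposition \ref{main_proposition_2} and integrating term by term, the contribution of the pole sum becomes
$\sum_{|\rho - s_0| \leq 20\alpha r}\bigl(\log(\sigma + it_0 - \rho) - \log(s_0 - \rho)\bigr)$,
with the branches of the individual logarithms determined by the chosen path. The error term $O(\log(QT))$ contributes $O(\alpha r \log(QT))$, and since $r = (\log\log(QT))^{2}(\log(Q^{2}T))^{-1}$, this is exactly $O(\alpha(\log\log(QT))^{2})$.

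It remains to control $\log L(s_0,\chi)$. Since $\Re(s_0) = \tfrac12 + 14\alpha r$ and $t_0 \in J_2 \subset J_0$, Proposition \ref{main_proposition} gives $|\log L(s_0,\chi)| \ll \alpha \log\log(QT)$, which is absorbed into the stated error $O(\alpha(\log\log(QT))^{2})$. Taking real parts cancels the ambiguity of branches of the logarithms in the sum, which yields the ``in particular'' formula \eqref{Rama-Sanka-Lemma5-2} directly.

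The most delicate point will be the treatment of the path and the branches of $\log$: one must make sure that the same branch is used for $\log L$ along the integration as the one implicitly assumed on the right-hand side, and that the finitely many zeros $\rho$ in the disk $|\rho - s_0| \leq 20\alpha r$ are bypassed in a way that is compatible with the branch cuts of the terms $\log(\cdot - \rho)$. For the real-part version \eqref{Rama-Sanka-Lemma5-2} this subtlety evaporates, but for the full complex statement it requires a careful, if routine, bookkeeping of how the path crosses (or circumvents) the vertical lines through the zeros.
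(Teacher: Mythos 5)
Your proposal follows essentially the same route as the paper: integrating the truncated formula \eqref{mpro2} of Proposition \ref{main_proposition_2} along the horizontal segment from $s_0$ to $\sigma + it_0$, converting the error $O(\alpha r\log(QT))$ into $O(\alpha(\log\log(QT))^2)$ via the definition of $r$, and absorbing $\log L(s_0,\chi)\ll \alpha\log\log(QT)$ by Proposition \ref{main_proposition}. Your extra bookkeeping about bypassing zeros on the segment and fixing branches of the logarithm is a refinement of details the paper passes over silently, not a different method.
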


\begin{proof}
Let $t_{0} \in J_{2}$.
Then by formula \eqref{mpro2} for $\frac{1}{2} \leq \s \leq \frac{1}{2} + \frac{3}{2} \a r$, 
we find that
\begin{align*}
\int_{\frac{1}{2} + 14 \a r}^{\s}\frac{L'(x + it_0, \chi)}{L(x + it_0, \chi)}dx 
= \sum_{|\rho - s_0| \leq 20 \a r}\int_{\frac{1}{2} + 14 \a r}^{\s}\frac{dx}{x + it_0 - \rho} + O\l( \a (\log{\log(QT)})^2\r),
\end{align*}
and that 
\begin{align*}
&\log{L(\s + it_0, \chi)} - \log{L\l(s_0, \chi \r)}\\
&=\sum_{|\rho - s_0| \leq 20 \a r}\l( \log(\s + it_0 - \rho) - \log(s_0 - \rho) \r)
+ O\l( \a (\log{\log(QT)})^2\r).
\end{align*}
Hence, by Proposition \ref{main_proposition}, we obtain
\begin{align*}
\log{L(\s + it_0, \chi)}
=&\sum_{|\rho - s_0| \leq 20 \a r}\l( \log(\s + it_0 - \rho) - \log(s_0 - \rho) \r)\\
&+ O\l(\a (\log{\log(QT)})^2\r).
\end{align*}
This completes the proof of Lemma \ref{Rama-Sanka-Lemma5}.
\end{proof}

Now, let us start the proof of Proposition \ref{uniformlowerboundforL}.

\begin{proof}[Proof of Proposition \ref{uniformlowerboundforL}]
If $\frac{1}{2} + 14 \a r \leq \s \leq 2$, then Proposition \ref{uniformlowerboundforL} is implied  by Proposition \ref{main_proposition}.
Hence we consider the case of $\frac{1}{2} \leq \s \leq \frac{1}{2} + 14 \a r$.
We find that
\begin{align*}
\sum_{|\rho - s_0| \leq 20 \a r}\log\l|\frac{\s + it_0 - \rho}{s_0 - \rho}\r|
\geq \sum_{|\rho - s_0| \leq 20 \a r}\log\l|\frac{t_0 - \gamma}{20 \a r}\r|
\geq \sum_{|t_0 - \gamma| \leq 20 \a r}\log\l|\frac{t_0 - \gamma}{20 \a r}\r|
\end{align*}
hold uniformly for $\frac{1}{2} \leq \s \leq \frac{1}{2} + 14 \a r$.
In addition, if $[t_1, t_1 + 1] \subset J_1$, then for $T \geq T_{0}(\a)$, we see that
\begin{align*}
\int_{t_1}^{t_1 + 1}\sum_{|t - \gamma| \leq 20 \a r}\log\l|\frac{t - \gamma}{20 \a r}\r|dt
&=\sum_{t_1 - 20 \a r \leq \gamma \leq t_1 + 1 + 20 \a r}\int_{\max\{ t_1, \gamma - 20 \a r \}}^{\min\{t_1 + 1, \gamma + 20 \a r\}}
\log\l|\frac{t - \gamma}{20 \a r}\r|dt\\
&\geq \sum_{t_1 - 20 \a r \leq \gamma \leq t_1 + 1 + 20 \a r}\int_{\gamma - 20 \a r}^{\gamma + 20 \a r}\log\l|\frac{t - \gamma}{20 \a r}\r|dt\\
&= 20 \a r\sum_{t_1 - 20 \a r \leq \gamma \leq t_1 + 1 + 20 \a r}\int_{-1}^{1}\log|x|dx\\
&\geq  - C \a r \log(QT) \geq - C \a (\log{\log(QT)})^2,
\end{align*}
uniformly for $\chi \in S(Q)$.
Hence, there exists a $t_2 \in [t_1, t_1 + 1]$ satisfying
\begin{align*}
\sum_{|\rho - s_0| \leq 20 \a r}\log\l|\frac{\s + it_2 - \rho}{\frac{1}{2} + \a r + it_2 - \rho}\r|^{-1}
\ll \a (\log{\log(QT)})^2,
\end{align*}
uniformly for $\frac{1}{2} \leq \s \leq 2$ and $\chi \in S(Q)$.
Thus this estimate implies Proposition \ref{uniformlowerboundforL} by formula \eqref{Rama-Sanka-Lemma5-2}.
\end{proof}


\section{\textbf{The finite Euler product appearing in the expression of  
Dirichlet $L$-functions attached to imprimitive characters}}	\label{FEPNC}


In this section, we are going to show some estimates on the function
\begin{align*}
F_{q, \chi^{*}}(s) = \prod_{p \mid q}\l( 1 - \frac{\chi^{*}(p)}{p^{s}} \r)
\end{align*}
including Proposition \ref{unibd_F}.
In the following, we consider the case $F_{q, \chi^{*}} \not\equiv 1$.
In other words, we assume that $\chi^{*}$ is a primitive character modulo $d$ such that 
there exists a prime factor $p$ of $q$ with $p \nmid d$.

\begin{lemma}	\label{F_order}
Let $q \geq 2$.
Then $F_{q, \chi^{*}}$ is an entire function of order $1$.
\end{lemma}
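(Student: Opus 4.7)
The plan is to show that $F_{q, \chi^{*}}$ is entire of order exactly $1$ by producing matching upper and lower bounds on $M(r) = \max_{|s|=r}|F_{q, \chi^{*}}(s)|$ and then applying the definition
$$\rho = \limsup_{r \to \infty}\frac{\log \log M(r)}{\log r}.$$
The entire-ness itself is immediate: $F_{q,\chi^{*}}$ is a finite product over the $\omega(q)$ prime divisors of $q$ of functions of the form $1 - \chi^{*}(p)p^{-s}$, each of which is a finite sum of entire functions, so $F_{q,\chi^{*}}$ is entire.

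For the upper bound, I would use the trivial estimate $|1 - \chi^{*}(p)/p^{s}| \leq 1 + p^{-\sigma} \leq 1 + p^{|s|}$ on each factor. Taking the product over $p \mid q$ and then the logarithm gives
$$\log M(r) \leq \sum_{p \mid q} \log(1 + p^{r}) \leq \omega(q)\,r\log q + O(\omega(q)),$$
which shows $\log\log M(r)/\log r \to$ something $\leq 1$, i.e.\ the order is at most $1$.

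For the lower bound, the key observation is that the hypothesis $F_{q,\chi^{*}} \not\equiv 1$ guarantees the existence of at least one prime $p_{0} \mid q$ with $\chi^{*}(p_{0}) \neq 0$ (otherwise every factor reduces to $1$). I would then evaluate along the negative real axis, $s = -r$ with $r \geq 1$. Each factor with $\chi^{*}(p) \neq 0$ satisfies $|1 - \chi^{*}(p)p^{r}| \geq p^{r} - 1 \geq 1$ (using $|\chi^{*}(p)| = 1$), while the factors with $\chi^{*}(p) = 0$ contribute exactly $1$. Hence
$$M(r) \geq |F_{q,\chi^{*}}(-r)| \geq p_{0}^{r} - 1,$$
so $\log\log M(r)/\log r \to $ something $\geq 1$. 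Combining the two bounds yields order exactly $1$.

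There is no real obstacle here; the only subtlety is making correct use of the non-triviality hypothesis $F_{q,\chi^{*}} \not\equiv 1$ to secure a factor that actually grows exponentially on the negative real axis. Everything else is elementary estimation on a finite product.
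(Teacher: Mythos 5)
Your proposal is correct and follows essentially the same route as the paper: a trivial bound $|1-\chi^{*}(p)p^{-s}|\leq 1+p^{|s|}$ on each of the $\omega(q)$ factors for order $\leq 1$, and a lower bound on the negative real axis using the prime $p_{0}\mid q$ with $\chi^{*}(p_{0})\neq 0$ guaranteed by $F_{q,\chi^{*}}\not\equiv 1$ (the paper's standing assumption) for order $\geq 1$. No gaps; your use of the non-triviality hypothesis is exactly the point the paper's proof also relies on.
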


\begin{proof}
By the definition of $F_{q, \chi^{*}}$, if $\s < 0$, then we have
\begin{align*}
\l|F_{q, \chi^{*}}(s)\r| 
&= \prod_{p|q}\l| 1 - \frac{\chi^{*}(p)}{p^{s}} \r|
\leq \prod_{p|q}\l( 1 + \frac{1}{p^{\s}} \r)
= \exp\l(\sum_{p|q}\log\l( 1 + \frac{1}{p^{\s}} \r)\r)\\
&\leq \exp\l( -2\s\sum_{p|q}\log{p} \r)
\leq \exp\l( 2(\log{q})|s| \r).
\end{align*}
On the other hand, if $\s \geq 0$, then we have
\begin{align*}
\l|F_{q, \chi^{*}}(s)\r| 
&= \prod_{p|q}\l| 1 - \frac{\chi^{*}(p)}{p^{s}} \r|
\leq 2^{\omega(q)} \ll_q 1.
\end{align*}
Therefore, order of $F_{q, \chi^{*}}$ is less than or equal to $1$.
In addition, order of  $F_{q, \chi^{*}}$ is greater than or equal to $1$ 
since we can get the following lower bound
\begin{align*}
\l|F_{q, \chi^{*}}(s)\r| 
&= \prod_{p|q}\l| 1 - \frac{\chi^{*}(p)}{p^{s}} \r|
\geq \underset{p \nmid d}{\prod_{p|q}}\l( \frac{1}{p^{\s}} - 1 \r)
=\exp\l(\underset{p \nmid d}{\sum_{p|q}}\log\l(\frac{1}{p^{\s}} - 1 \r)\r)\\
&\geq \exp\l( -\frac{1}{2}\s\sum_{p \nmid q}\log{p} \r)
\geq \exp\l( \frac{1}{2}(\log{2})|\s| \r)
\end{align*}
for $\s \leq -2$.
Hence we obtain Lemma \ref{F_order}.
\end{proof}

The next lemma is an immediate consequence of this lemma.

\begin{lemma}	\label{HFDF}
Let $q \geq 2$ be an integer, $d$ be a proper divisor of $q$, and $\chi^{*}$ be a primitive Dirichlet character modulo $d$.
Then we have
\begin{align}	\label{infprorepF}
F_{q, \chi^{*}}(s) = s^{r}e^{a + bs}\prod_{\eta}\l( 1 - \frac{s}{i\eta} \r)e^{s / i\eta},
\end{align}
where the above infinite product runs over all the zeros of $F_{q, \chi^{*}}$ removing zero at $s = 0$, 
and $r$ is the multiplicity of zero of $F_{q, \chi^{*}}$ at $s = 0$.
In particular, $r$ equals to the number of prime factors of $q$ satisfying $\chi^{*}(p) = 1$. 
Moreover, by taking logarithmic derivative of the both sides, we find that
\begin{align}	\label{nonp_ld_i}
\frac{F_{q, \chi*}'}{F_{q, \chi^{*}}}(s) = \frac{r}{s} + b + \sum_{\eta}\l( \frac{1}{s - i\eta} + \frac{1}{i\eta} \r).
\end{align}
\end{lemma}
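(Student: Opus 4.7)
My plan is to deduce \eqref{infprorepF} as a direct application of Hadamard's factorization theorem, using the order bound already obtained in Lemma \ref{F_order}. Since $F_{q,\chi^{*}}$ is entire of order exactly $1$, the genus is at most $1$, so Hadamard's theorem yields a factorization of the form $F_{q,\chi^{*}}(s) = s^{r} e^{P(s)} \prod_{n}E_{1}(s/z_{n})$, where $r$ is the order of vanishing at $s = 0$, the $z_{n}$ run over the nonzero zeros of $F_{q,\chi^{*}}$, $E_{1}(z) = (1-z)e^{z}$, and $P$ is a polynomial of degree at most $1$, say $P(s) = a + bs$. As noted in the text, all zeros of $F_{q,\chi^{*}}$ lie on the imaginary axis, so each $z_{n}$ can be written as $i\eta$ with $\eta \in \mathbb{R}\setminus\{0\}$, which yields the stated shape of \eqref{infprorepF}.

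Next I would identify $r$ explicitly by examining the factors one at a time. A factor $1 - \chi^{*}(p)/p^{s}$ with $p \mid d$ is identically zero-free (in fact identically $1$ when $\chi^{*}(p) = 0$); a factor with $p \nmid d$ has zeros at the solutions of $p^{s} = \chi^{*}(p)$, i.e.\ $s = (\log\chi^{*}(p) + 2\pi i k)/\log p$. This passes through $s=0$ precisely when $\chi^{*}(p) = 1$, and in that case the zero at $s=0$ is simple for that factor (since the derivative $\log p \cdot p^{-s}$ does not vanish at $s=0$). By the uniqueness of prime factorization of $q$, the corresponding zeros at $s=0$ coming from distinct primes are independent in the sense that their orders add; thus $r$ equals the number of prime divisors $p$ of $q$ with $\chi^{*}(p) = 1$, as claimed.

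Finally, \eqref{nonp_ld_i} follows by taking the logarithmic derivative of \eqref{infprorepF} term by term: $(\log s^{r})' = r/s$, $(\log e^{a+bs})' = b$, and each elementary factor contributes $(\log((1 - s/(i\eta))e^{s/(i\eta)}))' = 1/(s - i\eta) + 1/(i\eta)$. Term-by-term differentiation is legitimate since the Hadamard product converges uniformly on compacta away from the zeros. I do not anticipate a serious obstacle; the only point requiring care is justifying that the polynomial $P$ in the Hadamard factorization has degree at most $1$, but this is immediate from Lemma \ref{F_order} together with the general fact that the degree of $P$ is bounded by the order of the entire function.
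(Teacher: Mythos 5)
Your proposal is correct and follows essentially the same route as the paper, which presents Lemma \ref{HFDF} as an immediate consequence of Lemma \ref{F_order} via Hadamard's factorization theorem for entire functions of order $1$, with $r$ identified by counting the factors $1 - \chi^{*}(p)p^{-s}$ (for $p \mid q$, $\chi^{*}(p) = 1$) that vanish simply at $s = 0$, and \eqref{nonp_ld_i} obtained by logarithmic differentiation of the product. No gaps; your remarks on the degree of the exponential polynomial and on term-by-term differentiation are exactly the standard justifications the paper leaves implicit.
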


\begin{lemma}	\label{b_estimate}
If $b$ is the number appearing in \eqref{infprorepF}, then
\begin{align}	\label{b_formula_2}
b = -\frac{1}{2}\log\l(q' / d'\r) 
+ i\underset{\chi^{*}(p) \not= 1}{\underset{p \nmid d}{\sum_{p|q}}}
\frac{\Im(\chi^{*}(p))}{2 - 2\Re(\chi^{*}(p))}\log{p},
\end{align}
where $q' = \rad(q), d' = \rad(d)$.
\end{lemma}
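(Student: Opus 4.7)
The plan is to extract $b$ by matching the logarithmic derivative coming from the Hadamard product \eqref{infprorepF} against the logarithmic derivative computed directly from the Euler-product definition, and then reading off the constant term at $s = 0$. From \eqref{nonp_ld_i}, each summand $1/(s-i\eta) + 1/(i\eta)$ vanishes at $s = 0$, and since $F_{q,\chi^{*}}$ has order $1$ by Lemma \ref{F_order}, the series $\sum_{\eta} 1/((i\eta)(s-i\eta))$ converges absolutely on compact subsets away from the zeros; so one is entitled to take the limit term by term and conclude
\begin{align*}
b = \lim_{s \to 0}\left(\frac{F_{q,\chi^{*}}'}{F_{q,\chi^{*}}}(s) - \frac{r}{s}\right).
\end{align*}

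On the other hand, differentiating $\log F_{q,\chi^{*}}(s) = \sum_{p \mid q,\, p \nmid d} \log(1 - \chi^{*}(p)/p^{s})$ yields the finite sum
\begin{align*}
\frac{F_{q,\chi^{*}}'}{F_{q,\chi^{*}}}(s) = \sum_{\substack{p \mid q \\ p \nmid d}} \frac{\chi^{*}(p)\log p}{p^{s} - \chi^{*}(p)}.
\end{align*}
I would then split according to whether $\chi^{*}(p) = 1$ or not. For the $r$ primes with $\chi^{*}(p) = 1$, a Taylor expansion gives $(\log p)/(p^{s} - 1) = 1/s - (\log p)/2 + O(s)$, and the $r$ copies of $1/s$ cancel the $r/s$ subtracted above. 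For primes with $\chi^{*}(p) \neq 1$ the summand is analytic at $s=0$ with value $\chi^{*}(p)\log p/(1 - \chi^{*}(p))$. Collecting constant terms, this produces
\begin{align*}
b = -\frac{1}{2}\sum_{\substack{p \mid q,\, p \nmid d \\ \chi^{*}(p) = 1}} \log p \;+\; \sum_{\substack{p \mid q,\, p \nmid d \\ \chi^{*}(p) \neq 1}} \frac{\chi^{*}(p)\log p}{1 - \chi^{*}(p)}.
\end{align*}

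The final step is the algebraic identity, valid for any $\chi$ on the unit circle with $\chi \neq 1$,
\begin{align*}
\frac{\chi}{1 - \chi} = \frac{\chi - 1}{|1 - \chi|^{2}} = -\frac{1}{2} + \frac{i\,\Im \chi}{2 - 2\Re \chi},
\end{align*}
which follows from $|1 - \chi|^{2} = 2 - 2\Re \chi$. Substituting this into the second sum produces an extra $-(\log p)/2$ for each prime with $\chi^{*}(p) \neq 1$; combining with the first sum gives $-\tfrac{1}{2}\sum_{p \mid q,\, p \nmid d}\log p = -\tfrac{1}{2}\log(q'/d')$, while the imaginary contribution matches the second term of \eqref{b_formula_2} exactly. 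No serious obstacle is anticipated: the only delicate point is justifying the term-by-term limit of the Hadamard series, handled by the order-one absolute convergence mentioned above; the rest is a direct comparison of Laurent coefficients followed by the elementary identity for $\chi/(1-\chi)$.
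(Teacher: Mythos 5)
Your proposal is correct and follows essentially the same route as the paper: both extract $b$ as $\lim_{s\to 0}\bigl(F_{q,\chi^{*}}'/F_{q,\chi^{*}}(s)-r/s\bigr)$ using \eqref{nonp_ld_i}, compute the logarithmic derivative from the finite Euler product, split the primes according to whether $\chi^{*}(p)=1$, and finish with the identity $\chi/(1-\chi)=-\tfrac12+i\,\Im\chi/(2-2\Re\chi)$. Your explicit Taylor expansion of $(\log p)/(p^{s}-1)$ and the term-by-term limit justification merely spell out steps the paper leaves implicit.
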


\begin{proof}
By formula \eqref{nonp_ld_i}, $b$ can be expressed by
\begin{align*}
\lim_{\s \downarrow 0}\l( \frac{F'_{q, \chi^{*}}}{F_{q, \chi^{*}}}(\s) - \frac{r}{\s} \r) = b.
\end{align*}
On the other hand, by taking logarithmic derivatives in \eqref{def_F}, we have
\begin{align*}
\frac{F_{q, \chi*}'}{F_{q, \chi^{*}}}(s)
= \sum_{p|q}\frac{\chi^{*}(p)\log{p}}{p^{s} - \chi^{*}(p)}
= \sum_{i = 1}^{r}\frac{\log{p_{i}}}{p_{i}^{s} - 1} + \underset{\chi^{*}(p) \not= 1}{\sum_{p|q}}\frac{\chi^{*}(p)\log{p}}{p^{s} - \chi^{*}(p)},
\end{align*}
where $p_{i}$ are prime factors of $q$ with $\chi^{*}(p) = 1$.
Therefore, we obtain 
\begin{align*}
\lim_{\s \downarrow 0}\l( \frac{F'_{q, \chi^{*}}}{F_{q, \chi^{*}}}(\s) - \frac{r}{\s} \r)
= -\frac{1}{2}\sum_{i = 1}^{r}\log{p_i} + \underset{\chi^{*}(p) \not= 1}{\sum_{p|q}}\frac{\chi^{*}(p)\log{p}}{1 - \chi^{*}(p)}.
\end{align*}
Moreover, if $\chi^{*}(p) \not= 0, 1$, then we can find that the identity
\begin{align*}
\frac{\chi^{*}(p)}{1 - \chi^{*}(p)}
&= -\frac{1}{2} + i\frac{\Im(\chi^{*}(p))}{2 - 2\Re(\chi^{*}(p))}
\end{align*}
holds by easy calculations.
Thus we obtain Lemma \ref{b_estimate}.
\end{proof}

\begin{lemma}	\label{shortintzerofF}
Let $h > 0$, and $N_{q, \chi^{*}}(t, h)$ be the number of zeros $i\eta$ of $F_{q, \chi^{*}}$ with $t \leq \eta \leq t + h$.
Then we have
\begin{align*}
N_{q, \chi^{*}}(t, h) \leq \omega\l(q' / d'\r) + \frac{h}{2}\log\l(q' / d'\r) + \frac{h^2}{h^2 + t^2}r,
\end{align*}
where $q' = \rad(q)$ and $d' = \rad(d)$.
\end{lemma}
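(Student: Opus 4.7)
The plan is to count the zeros of $F_{q,\chi^*}$ directly by examining its Euler factors one by one. Since $\chi^*(p) = 0$ when $p \mid d$, those factors equal $1$, so
$$F_{q,\chi^*}(s) = \prod_{p \mid q,\, p \nmid d}\bigl(1 - \chi^*(p)p^{-s}\bigr),$$
a product of exactly $\omega(q'/d')$ nontrivial factors. Writing $\chi^*(p) = e^{i\theta_p}$ for each such $p$, the $p$-th factor vanishes precisely at the points $s = i(\theta_p + 2\pi k)/\log p$, $k \in \ZZ$, which are equally spaced on the imaginary axis with gap $2\pi/\log p$. Hence any interval of length $h$ on the imaginary axis contains at most $\lfloor h\log p/(2\pi)\rfloor + 1$ zeros of the $p$-th factor.

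Since the zero set of $F_{q,\chi^*}$ (with multiplicity) is the union with multiplicity of the zero sets of its factors, summing over primes yields
$$N_{q,\chi^*}(t,h) \;\le\; \sum_{\substack{p\mid q\\ p\nmid d}}\Bigl(\frac{h\log p}{2\pi}+1\Bigr) = \omega(q'/d') + \frac{h}{2\pi}\log(q'/d').$$
Because $\frac{1}{2\pi} < \frac{1}{2}$ and the residual term $\frac{h^2}{h^2+t^2}r$ in the lemma is non-negative, this immediately implies the claimed bound (in fact with a better constant). The multiplicity $r$ at $s = 0$ is handled automatically by this count: the $r$ primes with $\chi^*(p) = 1$ each satisfy $\theta_p = 0$ and so each contributes the zero $\eta = 0$ via $k = 0$, together producing multiplicity $r$ whenever $0 \in [t, t+h]$.

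A second, more Hadamard-flavored route would take the real part of \eqref{nonp_ld_i} at $s = h + it$, use $\Re(1/(i\eta)) = 0$ together with the lower bound $\frac{h}{h^2 + (t - \eta)^2} \ge \frac{1}{2h}$ for $\eta \in [t, t+h]$ to convert a positivity argument into an upper bound on $N_{q,\chi^*}(t,h)$ in terms of $\Re(F'_{q,\chi^*}/F_{q,\chi^*})(h+it)$, of $\Re b = -\frac{1}{2}\log(q'/d')$ from Lemma \ref{b_estimate}, and of the pole contribution $\frac{rh}{h^2+t^2}$. Bounding $\Re(F'_{q,\chi^*}/F_{q,\chi^*})(h+it)$ by $\omega(q'/d')/h$ via the Euler product (using $|p^{h+it} - \chi^*(p)| \ge p^h - 1 \ge h\log p$) then gives a bound of the same shape as in the lemma, with the summand $\frac{h^2 r}{h^2+t^2}$ appearing naturally from the bookkeeping of the zero at $s = 0$. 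The main obstacle along this route is extracting the precise constants $1$ and $\tfrac12$ in front of $\omega(q'/d')$ and $h\log(q'/d')$ rather than the factors of $2$ and $1$ that the elementary Euler-product bound produces; this is exactly the difficulty sidestepped by the direct counting argument above.
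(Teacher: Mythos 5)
Your proof is correct, and it takes a genuinely different route from the paper. You count the zeros directly from the factorization $F_{q,\chi^*}(s)=\prod_{p\mid q,\,p\nmid d}\bigl(1-\chi^{*}(p)p^{-s}\bigr)$: since $p\nmid d$ forces $|\chi^{*}(p)|=1$, each factor's zeros form an explicit arithmetic progression on the imaginary axis with spacing $2\pi/\log p$, so an interval of length $h$ meets it in at most $h\log p/(2\pi)+1$ points, and summing over the $\omega(q'/d')$ relevant primes gives $N_{q,\chi^{*}}(t,h)\le \omega(q'/d')+\frac{h}{2\pi}\log(q'/d')$, which is even sharper than the stated bound (the term $\frac{h^{2}}{h^{2}+t^{2}}r$ is not needed, and multiplicity at $s=0$ is absorbed as you note). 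The paper instead takes real parts of the partial-fraction formula \eqref{nonp_ld_i} at $s=h+it$, inserts $\Re b=-\tfrac12\log(q'/d')$ from Lemma \ref{b_estimate} and the Euler-product bound $|F'/F(h+it)|\le h^{-1}\omega(q'/d')$, obtaining the intermediate inequality \eqref{zerosumes_2}, and then extracts the zero count from the sum $\sum_{\eta}h/(h^{2}+(t-\eta)^{2})$. What your approach buys is exact constants and complete elementarity; note that the paper's own final step only concludes $N_{q,\chi^{*}}(t,h)\ll$ the right-hand side (the displayed $\gg$ chain loses constants), so your argument is, if anything, the cleaner way to get the inequality as literally stated. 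What the paper's route buys is reusability: the bound \eqref{zerosumes_2} on $\sum_{\eta}h/(h^{2}+(t-\eta)^{2})$ is needed again verbatim in the proof of Proposition \ref{trucate_F_1}, so the Hadamard-flavored computation does double duty, whereas your counting argument yields only the zero-count statement. Your closing remark about the second route is accurate: run naively it gives constants $2$ and $1$ rather than $1$ and $\tfrac12$, which is exactly the imprecision the paper hides in $\gg$ and your direct count avoids.
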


\begin{proof}
By formula \eqref{nonp_ld_i}, we have
\begin{align}	\label{lem_2_1}
\frac{F_{q, \chi*}'}{F_{q, \chi^{*}}}(h + it) = \frac{r}{h + it} + b + \sum_{\eta}\l( \frac{1}{h + it - i\eta} + \frac{1}{i\eta} \r).
\end{align}
On the other hand, by taking logarithmic derivatives in \eqref{def_F}, we find that
\begin{align}	\label{estld_1}
\l|\frac{F_{q, \chi*}'}{F_{q, \chi^{*}}}(h + it)\r|
= \l|\sum_{p|q}\frac{\chi^{*}(p)\log{p}}{p^{h + it} - \chi^{*}(p)}\r|
\leq \underset{p \nmid d}{\sum_{p|q}}\frac{\log{p}}{p^{h} - 1}
\leq \underset{p \nmid d}{\sum_{p|q}}\frac{1}{h} = h^{-1}\omega\l(q' / d'\r).
\end{align}
Now, we take the real parts of the both sides of \eqref{lem_2_1}.
Then, by $\Re\sum_{\eta}(i\eta)^{-1} = 0$ and \eqref{b_formula_2}, we have
\begin{align} \label{zerosumes_2}
\sum_{\eta}\frac{h}{h^2 + (t - \eta)^2}
\leq \frac{\omega\l(q' / d'\r)}{h} + \frac{1}{2}\log\l(q' / d'\r) + \frac{h}{h^2 + t^2}r.
\end{align}
Hence, we have
\begin{align*}
&\frac{\omega\l(q' / d'\r)}{h} + \frac{1}{2}\log\l(q' / d'\r) + \frac{h}{h^2 + t^2}r
\gg \sum_{\eta}\frac{h}{h^2 + (t - \eta)^2}\\
&\geq \sum_{|t - \eta - 1/(2h)| \leq 1/(2h)}\frac{h}{h^2 + (t - \eta)^2}
\gg \sum_{|t - \eta - 1/2| \leq 1/(2h)}\frac{1}{h}
= \frac{1}{h}N_{q, \chi^{*}}(t, h),
\end{align*}
which completes the proof of Lemma \ref{shortintzerofF}.
\end{proof}

\begin{proposition}	\label{trucate_F_1}
Let $h > 0$. If $|\s| \leq h$
, then we have
\begin{align*}
\frac{F_{q, \chi*}'}{F_{q, \chi^{*}}}(s) = \frac{r}{s} + \sum_{|t - \eta| \leq h}\frac{1}{s - i\eta} 
+ O\l(h^{-1}\omega\l(q' / d'\r) + \log\l(q' / d'\r) + \frac{r}{|t| + h} \r).
\end{align*}
\end{proposition}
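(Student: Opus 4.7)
The plan is to extract the truncated partial fraction expansion from the full one in \eqref{nonp_ld_i} by comparing the logarithmic derivative at $s = \sigma + it$ with its value at the well-controlled reference point $s_1 = h + it$. Specifically, I would write
\begin{align*}
\frac{F_{q,\chi^*}'}{F_{q,\chi^*}}(s) - \frac{F_{q,\chi^*}'}{F_{q,\chi^*}}(s_1)
= r\!\left(\frac{1}{s} - \frac{1}{s_1}\right) + \sum_{\eta}\!\left(\frac{1}{s - i\eta} - \frac{1}{s_1 - i\eta}\right),
\end{align*}
where the constant $b$ and the normalizing $1/(i\eta)$ terms in \eqref{infprorepF} cancel in the subtraction. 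The left-hand side is $O(h^{-1}\omega(q'/d'))$ by \eqref{estld_1}, and the term $r/s_1 = r/(h+it)$ is absorbed into the error since $|h+it|^{-1} \leq 2/(|t|+h)$.

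Next I would split the sum according to whether $|t - \eta| > h$ or $|t - \eta| \leq h$. For the far zeros I would use
\begin{align*}
\left|\frac{1}{s - i\eta} - \frac{1}{s_1 - i\eta}\right|
= \left|\frac{\sigma - h}{(s - i\eta)(s_1 - i\eta)}\right|
\leq \frac{2h}{(t - \eta)^2},
\end{align*}
using $|s - i\eta|, |s_1 - i\eta| \geq |t - \eta|$ and $|\sigma| \leq h$. Since $(t-\eta)^2 > h^2$ on this range, this is at most $4h/(h^2+(t-\eta)^2)$, so the tail sum is controlled directly by the key inequality \eqref{zerosumes_2}, which yields
\begin{align*}
\sum_{|t-\eta| > h}\!\left|\frac{1}{s - i\eta} - \frac{1}{s_1 - i\eta}\right|
\ll h^{-1}\omega(q'/d') + \log(q'/d') + \frac{hr}{h^2 + t^2},
\end{align*}
and the last term is $\ll r/(|t|+h)$ since $h^2 + t^2 \geq \frac{1}{2}(h+|t|)^2$.

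For the near zeros the terms $1/(s-i\eta)$ are kept as the stated main sum, and it remains to estimate the discarded companions
\begin{align*}
\sum_{|t-\eta| \leq h} \frac{1}{|s_1 - i\eta|}
\leq \frac{1}{h}\,N_{q,\chi^*}(t-h,\,2h),
\end{align*}
to which I apply Lemma \ref{shortintzerofF} with parameter $2h$ and interval starting at $t-h$. This produces exactly $h^{-1}\omega(q'/d') + \log(q'/d')$ together with a term $\ll hr/(4h^2 + (t-h)^2)$, which again is $\ll r/(|t|+h)$ after checking the two regimes $|t| \leq 2h$ and $|t| > 2h$ separately. Assembling these three estimates yields the claimed formula.

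The only genuinely delicate point I anticipate is the bookkeeping of the $r/(|t|+h)$ error coming from zeros near the real axis: the trivial zero at $s = 0$ (contributing the pole $r/s$) must be excluded from the remainder while all three sources of this shape — the reference term $r/(h+it)$, the tail of the zero sum via \eqref{zerosumes_2}, and the short-interval count from Lemma \ref{shortintzerofF} — have to be controlled uniformly in $t$, including the small-$t$ regime. Once one verifies that $h^2+t^2$, $4h^2+(t-h)^2$, and $\sqrt{h^2+t^2}$ are all comparable to $(|t|+h)^2$ or $|t|+h$ up to absolute constants, the proof concludes by collecting terms.
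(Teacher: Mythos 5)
Your proposal is correct and follows essentially the same route as the paper: subtract the logarithmic derivative at the reference point $h+it$ (so that $b$ and the $1/(i\eta)$ terms cancel), bound the reference value by \eqref{estld_1}, split the zero sum at $|t-\eta|\leq h$, control the far zeros through \eqref{zerosumes_2} and the discarded near-zero companions through Lemma \ref{shortintzerofF}. The only differences are cosmetic bookkeeping (e.g.\ your count $N_{q,\chi^{*}}(t-h,2h)$ versus the paper's direct $\ll h^{-1}\omega(q'/d')+\log(q'/d')+hr/(h^2+t^2)$ bound), so no further comment is needed.
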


\begin{proof}
By \eqref{nonp_ld_i} and \eqref{estld_1}, we find that
\begin{align*}
\frac{F_{q, \chi*}'}{F_{q, \chi^{*}}}(\s + it) - \frac{F_{q, \chi*}'}{F_{q, \chi^{*}}}(h + it)
= \frac{r}{\s + it} - \frac{r}{h + it} + \sum_{\eta}\l( \frac{1}{\s + it - i\eta} - \frac{1}{h + it - i\eta} \r),
\end{align*}
and that $\l|\frac{F_{q, \chi*}'}{F_{q, \chi^{*}}}(h + it)\r| \leq h^{-1}\omega\l(q' / d'\r)$.
Therefore, we have
\begin{align*}
\frac{F_{q, \chi*}'}{F_{q, \chi^{*}}}(\s + it)
=&\frac{r}{\s + it} + \l(\sum_{|t - \eta| \leq h} + \sum_{|t - \eta| > h} \r)
\l( \frac{1}{\s + it - i\eta} - \frac{1}{h + it - i\eta} \r)\\
&+O\l( \frac{r}{|t| + h} + h^{-1}\omega\l(q' / d'\r) \r).
\end{align*}
In addition, we can obtain that
\begin{align*}
&\sum_{|t - \eta| > h}\l( \frac{1}{\s + it - i\eta} - \frac{1}{h + it - i\eta} \r)
= \sum_{|t - \eta| > h}\frac{h - \s}{(\s + it - i\eta)(h + it - i\eta)}\\
&\ll \sum_{|t - \eta| > h}\frac{h}{|t - \eta|^2}
\ll \sum_{\eta}\frac{h}{h^2 + (t - \eta)^2}
\leq h^{-1}\omega\l(q' / d'\r) + \frac{1}{2}\log\l(q' / d'\r) + \frac{h}{h^2 + t^2}r
\end{align*}
by \eqref{zerosumes_2}, and that 
\begin{align*}
\sum_{|t - \eta| \leq h}\frac{1}{h + it - i\eta}
\ll \sum_{|t - \eta| \leq h}h^{-1} 
\ll h^{-1}\omega\l(q' / d'\r) + \log\l(q' / d'\r) + \frac{h}{h^{2} + t^2}r
\end{align*}
by Lemma \ref{shortintzerofF}.
Hence we obtain Proposition \ref{trucate_F_1}.
\end{proof}

\begin{proposition}	\label{log_F}
Let $h > 0$.
For $|\s| \leq h, |t| \geq h$, we have
\begin{align}	\label{Eq_log_F}
\log{F_{q, \chi^{*}}}(s)
= \frac{1}{2}\sum_{|t - \eta| \leq h}\log\l( \frac{\s^2 + (t - \eta)^2}
{h^{2} + (t - \eta)^2} \r) + O\l( h^{-1}\omega\l(q' / d'\r) + h\log\l(q' / d'\r) + \frac{h}{|t|}r \r).
\end{align}
\end{proposition}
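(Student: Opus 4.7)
The plan is to derive \eqref{Eq_log_F} by integrating the formula of Proposition \ref{trucate_F_1} along a horizontal segment, recovering $\log F_{q, \chi^{*}}$ from its logarithmic derivative. Since the right-hand side of \eqref{Eq_log_F} is real, I interpret the left-hand side as $\log|F_{q, \chi^{*}}(s)|$, and use that $\partial_{\s}\log|F_{q, \chi^{*}}(\s + it)| = \Re\frac{F_{q, \chi^{*}}'}{F_{q, \chi^{*}}}(\s + it)$ to write
\[
\log|F_{q, \chi^{*}}(\s + it)| = \log|F_{q, \chi^{*}}(h + it)| + \int_{h}^{\s}\Re\frac{F_{q, \chi^{*}}'}{F_{q, \chi^{*}}}(x + it)\,dx.
\]
Throughout the integration segment one has $|x| \leq h$, so Proposition \ref{trucate_F_1} is applicable along the whole path.

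For the base value $\log|F_{q, \chi^{*}}(h + it)|$, I would work directly with the finite Euler product, using the inequality $|\log|1 - z|| \leq |z|/(1 - |z|)$ for $|z| < 1$ applied to $z = \chi^{*}(p)/p^{h + it}$. This yields $|\log|1 - \chi^{*}(p)/p^{h + it}|| \leq 1/(p^{h} - 1) \ll 1/h$ for each $p \mid q$ with $p \nmid d$, hence $\log|F_{q, \chi^{*}}(h + it)| \ll h^{-1}\omega(q'/d')$, matching the first piece of the claimed error.

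Next I substitute the expression from Proposition \ref{trucate_F_1} and integrate termwise. The contribution of the truncated sum is
\[
\sum_{|t - \eta| \leq h}\int_{h}^{\s}\Re\frac{dx}{x + i(t - \eta)}
= \frac{1}{2}\sum_{|t - \eta| \leq h}\log\l(\frac{\s^{2} + (t - \eta)^{2}}{h^{2} + (t - \eta)^{2}}\r),
\]
which is the main term of \eqref{Eq_log_F}. The pole-term contribution $r/(x + it)$ integrates to $\frac{r}{2}\log\frac{\s^{2} + t^{2}}{h^{2} + t^{2}} \ll r h^{2}/t^{2} \ll rh/|t|$ using $|\s| \leq h \leq |t|$. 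Integrating the error term of Proposition \ref{trucate_F_1} over a segment of length at most $2h$ produces $O(\omega(q'/d') + h\log(q'/d') + hr/(|t| + h))$.

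Combining the above, the stray $\omega(q'/d')$ is absorbed into the stated bound: when $h \leq 1$ it is $\leq h^{-1}\omega(q'/d')$, and when $h \geq 1$ the inequality $\log(q'/d') \geq \omega(q'/d')\log 2$ gives $\omega(q'/d') \ll h\log(q'/d')$; and $h/(|t| + h) \leq h/|t|$ holds since $|t| \geq h$. The only delicate point is the base-value bound $\log|F_{q, \chi^{*}}(h + it)| \ll h^{-1}\omega(q'/d')$: a naive bound $O(\omega(q'/d'))$ would be insufficient when $h$ is small, so the sharp inequality $|\log|1 - z|| \leq |z|/(1 - |z|)$ is essential to capture the $h$-dependence uniformly. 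Everything else is mechanical integration.
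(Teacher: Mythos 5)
Your proposal is correct and is essentially the paper's own argument: you integrate the truncated formula of Proposition \ref{trucate_F_1} along the horizontal segment from $h+it$ to $\sigma+it$ (the sum over $|t-\eta|\leq h$ producing the stated main term, the $r/s$ term giving $O(hr/|t|)$, the error term giving $O(\omega(q'/d') + h\log(q'/d') + hr/(|t|+h))$, then absorbed as you indicate) and you bound the boundary value $\log F_{q,\chi^{*}}(h+it)\ll h^{-1}\omega(q'/d')$ from the finite Euler product, which is exactly the paper's computation via the Taylor expansion of the logarithm. The only deviation is that you prove the statement for $\log|F_{q,\chi^{*}}(s)|$, whereas the paper writes it for the complex logarithm, whose imaginary part must additionally be absorbed into the error term using the zero count of Lemma \ref{shortintzerofF} (each zero with $|t-\eta|\leq h$ contributes a bounded argument); since Proposition \ref{log_F} is only ever applied after taking real parts (in Proposition \ref{unibd_F} and in the proof of Theorem \ref{exfoML2}), your real-part version suffices for all uses in the paper.
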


\begin{proof}
By Lemma \ref{shortintzerofF} and Proposition \ref{trucate_F_1}, we can find that
\begin{align*}
\log{F_{q, \chi^{*}}}(\s + it)
&= \int_{h}^{\s}\frac{F_{q, \chi^{*}}'}{F_{q, \chi^{*}}}(\a + it)d\a + \log{F_{q, \chi^{*}}}(h + it)\\
&= \frac{1}{2}\sum_{|t - \eta| \leq h}\log\l( \frac{\s^2 + (t - \eta)^2}
{h^{2} + (t - \eta)^2} \r) + \log{F_{q, \chi^{*}}(h + it)}\\
& \quad + O\l(\omega(q' / d') + h\log\l(q' / d'\r) + \frac{h}{|t|}r\r).
\end{align*}
In addition, we see that
\begin{align*}
|\log{F_{q, \chi^{*}}}(h + it)|
&=\l|\underset{p \nmid d}{\sum_{p|q}}\sum_{n=1}^{\infty}
\frac{1}{n}\l( \frac{\chi^{*}(p)}{p^{h + it}} \r)^{n}\r|
\leq \underset{p \nmid d}{\sum_{p|q}}\sum_{n = 1}^{\infty}\frac{1}{p^{h n}}
=\underset{p \nmid d}{\sum_{p|q}}\frac{1}{p^h  -1}\\
&\leq \underset{p \nmid d}{\sum_{p|q}}\frac{1}{h \log{p}} \ll h^{-1}\omega\l(q' / d'\r)
\end{align*}
by the definition of $F_{q, \chi^{*}}$ and the Taylor expansion of the logarithmic function.
Hence we obtain Proposition \ref{log_F}.
\end{proof}

Now, let us start the proof of Proposition \ref{unibd_F}.

\begin{proof}[Proof of Proposition \ref{unibd_F}]
Let $T \geq \omega(q)$, and $d$ be the smallest modulus of $\chi \in S_{1}(q)$.
Then, by Lemma \ref{shortintzerofF}, the number of zeros $i\eta$ of the all $F_{q, \chi^{*}}$ with $\chi \in S_{1}(q)$ with $\eta \in [T, T + 1]$
is less than $C\#S_{1}(q)\log\l( q' / d' \r)$, where $C$ is an absolute positive constant.
Therefore, there exists a $t_0 \in [T, T + 1]$ such that $|t_0 - \eta| \geq \frac{1}{2C \#S_1(q)\log\l( q' / d' \r)}$ holds for all zeros $i\eta$.
Now we apply Proposition \ref{log_F} with 
$h \asymp \sqrt{\frac{\omega\l(q' / d'\r) / \log\l(q' / d'\r)}{\log\l( \#S_{1}(q)\omega\l( q' / d'\r) + 2 \r)}}$.
By taking real parts on the both sides of equation \eqref{Eq_log_F}, we obtain
\begin{align*}
&\log|F_{q, \chi^{*}}(\s + it_{0})| 
\geq -\frac{1}{2}\sum_{|t_{0} - \eta| \leq h}\log(4Ch^{2}\#S_1(q)\log\l( q' / d' \r))\\
&\qquad \qquad \qquad \qquad \qquad \qquad
- C'\l(h^{-1}\omega\l(q' / d'\r) + h\log(q' / d')\r)\\
&\geq - C'' \omega\l(q' / d'\r)\log\l(\#S_{1}(q)\omega\l( q' / d' \r) + 2\r)
\l( 1 + \sqrt{\frac{\log\l(q' / d'\r) / \omega\l(q' / d'\r)}
{\log\l( \#S_{1}(q)\omega\l( q' / d'\r) + 2 \r)}} \r),
\end{align*}
uniformly for $\chi \in S_{1}(q)$ and $|\s| \leq h$ with $C', C'' > 0$ sufficiently large positive absolute constants.
Hence, we have
\begin{align*}
&|F_{q, \chi^{*}}(\s + it_{0})|^{-1}\\
&\leq \exp\l(C'' \omega\l(q' / d'\r)\log\l(\#S_{1}(q)\omega\l( q' / d' \r) + 2\r)
\l( 1 + \sqrt{\frac{\log\l(q' / d'\r) / \omega\l(q' / d'\r)}
{\log\l( \#S_{1}(q)\omega\l( q' / d'\r) + 2 \r)}} \r)\r),
\end{align*}
which completes the proof of Proposition \ref{unibd_F}.
\end{proof}


\section{\textbf{Proof of Theorems \ref{exfoML1} and \ref{exfoML2}}}	


\begin{proof}[Proof of Theorem \ref{exfoML1}]
Let $x > 0$, $T \geq \max\l\{T_{0}, \exp\l(q^{1/3}\r), 2 / x\r\}$,
 and $\s_0 = 1 + 1 / \log(x + 3)$.
First, using Perron's formula (cf. Theorem 5.2 and Corollary 5.3 in \cite{MV}), we have
\begin{align}	\label{apply_Perron_L}
M^{*}(x, \chi) 	
=& \frac{1}{2\pi i}\int_{\s_0 -iT_{\nu}}^{\s_0 + iT_{\nu}}\frac{x^{s}}{L(s, \chi)s}ds
+ O\l(\frac{x\log(x + 3)}{T} + \min\l\{1, \frac{x}{T \l< x \r> } \r\} \r),
\end{align}
where $T_{\nu}$ satisfies the inequality
\begin{align}	\label{L_lower}
|L(\s + iT_{\nu}, \chi)|^{-1}
\leq \exp\l(C(\log{\log{T}})^2\r)
\end{align}
for any $\frac{1}{2} \leq \s \leq 2$ and $\chi \in S(q)$ with $T_{\nu} \in [T, 2T]$.
Note that we can take the above $T_{\nu}$ by Proposition \ref{uniformlowerboundforL}.
Here, we remark that $T \geq \exp\l( q^{1/3} \r) \gg q$.

Let $M = m + \frac{1}{2}$ with a positive integer $m$ satisfying $m > T$. By the residue theorem, we have
\begin{align*}	
&\frac{1}{2\pi i}\int_{\s_0 -iT_{\nu}}^{\s_0 + iT_{\nu}}\frac{x^s}{L(s, \chi)s}ds 
= \frac{1}{2\pi i}\l(\int_{-M+iT}^{\s_0 + iT_{\nu}}+\int_{-M-iT_{\nu}}^{-M + iT_{\nu}}+\int_{\s_0 - iT_{\nu}}^{-M - iT_{\nu}}\r)
\frac{x^s}{L(s, \chi)s}ds	\\ 
&	+\sum_{|\gamma|<T_*}\underset{s=\rho}{\Res}\l(\frac{x^s}{L(s, \chi)s}\r)
	+\sum_{0 \leq l < M}\underset{s=-l}{\Res}\l(\frac{x^s}{L(s, \chi)s}\r)\\
&=: J_1 + J_2 + J_3
+\sum_{|\gamma|<T_{\nu}}\underset{s=\rho}{\rm Res}\l(\frac{x^s}{L(s, \chi)s}\r)
+\sum_{0 \leq l < M}\underset{s=-l}{\rm Res}\l(\frac{x^s}{L(s, \chi)s}\r).
\end{align*}
Here, by the basic formula for residues, we find that
\begin{align*}
\underset{s=\rho}{\Res}\l(\frac{x^s}{L(s, \chi)s}\r)
= \frac{1}{(m(\rho)-1)!}\lim_{s \rightarrow \rho}\frac{d^{m(\rho)-1}}{ds^{m(\rho)-1}}
\l((s-\rho)^{m(\rho)}\frac{x^{s}}{L(s, \chi)s}\r).
\end{align*}
As for the other residues, we can also obtain the formula by the functional equation
\begin{align*}
L(s, \chi)
= \epsilon(\chi)L(1 - s, \bar{\chi})2^{s}\pi^{s - 1}d^{1/2 - s}\Gamma(1 - s)\sin\l( \frac{\pi}{2}(s + \kappa) \r).
\end{align*}
Here $\epsilon(\chi)$ is defined by
\begin{align*}
\epsilon(\chi) = \frac{\tau(\chi)}{i^{\kappa}d^{1/2}}.
\end{align*}

Now we estimate the integrals $J_1$, $J_2$ and $J_3$.
By Lemma \ref{RLO}, $J_2$ is evaluated by
\begin{align*}
|J_2|
&=	\l|\int_{|t| \leq T_{\nu}}\frac{x^{-M+it}}{L(-M + it, \chi)(-M + it)}dt \r|
\ll x^{-M}\int_{|t| \leq T_{\nu}}\l( \frac{2 \pi e}{M} \r)^{M}M^{-3/2}dt\\
&\ll \l( \frac{x}{2 \pi e} \r)^{-M}M^{-M - 3 / 2}T.
\end{align*}
Therefore, we have
\begin{align*}
\lim_{M \rightarrow \infty}J_{2} = 0.
\end{align*}

Next we estimate $J_{1}$.
We put
\begin{align*}
J_1
&=\frac{1}{2\pi i}\l(\int_{1/2 + iT_{\nu}}^{\s_0+iT_{\nu}} + \int_{-1 + iT_{\nu}}^{1/2+iT_{\nu}} + \int_{-M+iT_{\nu}}^{-1 + iT_{\nu}}\r)
\frac{x^s}{L(s, \chi)s}ds
=: J_{1}' + J_{1}'' + J_{1}'''.
\end{align*}
By Lemma \ref{RLO} and estimate \eqref{L_lower}, we find that
\begin{align*}
|J_{1}'|
&\ll \int_{1 / 2}^{\s_0} x T_{\nu}^{-1}\exp\l(C(\log{\log{T}})^2\r)d\s 
\ll \frac{x\exp\l( C(\log{\log{T}})^2 \r)}{T},
\end{align*}
\begin{align*}
|{J_1}''|
\ll \frac{\exp\l( C(\log{\log{T}})^2 \r)}{{T}^{3 / 2}}\int_{-1}^{1/2}(xT_\nu)^{\sigma}d\sigma
\ll \frac{x^{1/2}\exp\l( C (\log{\log{T}})^2 \r)}{T\log(xT)},
\end{align*}
and that
\begin{align*}
|{J_1}'''|
\ll \frac{1}{{T}^{3/2}}\int_{-M}^{-1}(xT_{\nu})^{\sigma}d\sigma
\ll \frac{(x T)^{-1}}{{T}^{3 / 2}\log(xT)}.
\end{align*} 
Hence we have
\begin{align*}	
J_1 
\ll \frac{x \exp\l( C(\log{\log{T}})^2 \r)}{T}.
\end{align*}
Similarly, we have
\begin{align*}
J_3 
\ll \frac{x \exp\l( C(\log{\log{T}})^2 \r)}{T}
\end{align*}
since $L(\bar{s}, \chi) = \bar{L(s, \bar{\chi})}$ and $\bar{\chi}$ is also a primitive character. 
From the above estimates, we obtain Theorem \ref{exfoML1}.
\end{proof}


\begin{proof}[Proof of Theorem \ref{exfoML2}]
First, we find that
\begin{align*}
M^{*}(x, \chi) 	
=& \frac{1}{2\pi i}\int_{\s_0 -iT_{\nu}}^{\s_0 + iT_{\nu}}\frac{x^{s}}{L(s, \chi)s}ds
+ O\l(\frac{x\log(x + 3)}{T} + \min\l\{1, \frac{x}{T \l< x \r> } \r\} \r),
\end{align*}
similarly to \eqref{apply_Perron_L}, where we use the same notation as in 
the proof of Theorem \ref{exfoML1}.
Here, by the proof of Proposition \ref{uniformlowerboundforL}, 
$T_{\nu} \in J_{1} \subset J_{0}$ holds, 
where $J_{0}$ and $J_{1}$ are intervals appearing in Corollary \ref{uniesconL} and Lemma \ref{Rama-Sanka-lemma2}, respectively.
We also consider a uniform estimate of $F_{q, \chi^{*}}(s)$ for $\chi \in S^{*}(q)$.
Here $S^{*}(q)$ denotes the set of all imprimitive characters modulo $q$.
If $h := \frac{1}{\log{q}} \leq \s \leq 2$, then we have
\begin{align}	
\prod_{p|q}\l| 1 - \frac{\chi^{*}(p)}{p^{s}} \r|^{-1}
&\leq \prod_{p|q}\l(1 + \frac{1}{p^{h} - 1}\r)
\leq \prod_{p|q}\l(1 + \frac{1}{h\log{p}}\r) \nonumber \\ \label{INEF_Th_2_1}
&
\leq \exp\l(\sum_{p|q} \frac{1}{h\log{p}} \r) 
\leq \exp\l( C\omega(q)\log{q} \r).
\end{align}
Therefore, by this estimate, \eqref{L_lower} and Lemma \ref{RLO}, we obtain
\begin{align}
&|L(\s + iT_{\nu}, \chi)|^{-1} 
= \l|L\l(\s + iT_{\nu}, \chi^{*}\r)\r|^{-1}\prod_{p|q}\l| 1 - \frac{\chi^{*}(p)}{p^{s}} \r|^{-1} \nonumber\\	\label{INEQLF2}
&\leq \exp\l(C\l((\log{\log{T}})^2 + \omega(q)\log{q} \r)\r) \leq \exp\l(C(\log{\log{T}})^2\r)
\end{align}
for $h \leq \s \leq 2$.

Here, let $M = m + \frac{1}{2}$, with a positive integer $m$ satisfying $m > T$.
By Proposition \ref{unibd_F} and $\omega(q) \ll \frac{\log{q}}{\log{\log(q + 5)}}$, 
we can also take some $T_{*} \in [T_{\nu}, T_{\nu} + 1] \subset J_{0}$ such that
\begin{align}	\label{est_Fq}
|F_{q, \chi^{*}}(\s + iT_{*})|^{-1} = \l|\prod_{p|q}\l(1 - \frac{\chi^{*}(p)}{p^{\s + iT_{*}}} \r)\r|^{-1} 
\leq \exp\l(\frac{C(\log{q})^2}{\log{\log(q + 5)}}\r)
\end{align}
holds for $|\s| \leq h$, uniformly $\chi \in S^{*}(q)$.
Then, by using the residue theorem, we have
\begin{align*}	
&\frac{1}{2\pi i}\int_{\s_0 -iT_{\nu}}^{\s_0 + iT_{\nu}}\frac{x^s}{L(s, \chi)s}ds= \frac{1}{2\pi i}\times\\
&\l(\int_{1/4 + iT_{\nu}}^{\s_0 + iT_{\nu}} + \int_{1/4 + iT_{*}}^{1/4 + iT_{\nu}} + \int_{-M + iT_{*}}^{1/4 + iT_{*}}
+ \int_{-M-iT_{*}}^{-M + iT_{*}} + \int_{1/4 - iT_{*}}^{-M - iT_{*}} + \int_{1/4 - iT_{\nu}}^{1/4 - iT_{*}} + \int_{\s_{0} + iT_{\nu}}^{1/4 + iT_{\nu}}\r)\\
&\frac{x^{s}ds}{L(s, \chi)s}
+\sum_{|\gamma|<T_{\nu}}\underset{s=\rho}{\Res}\l(\frac{x^{s}}{L(s, \chi)s}\r)
+\sum_{|\gamma|<T_{*}}\underset{s=i\eta}{\Res}\l(\frac{x^{s}}{L(s, \chi)s}\r)
+\sum_{0 \leq l < M}\underset{s=-l}{\Res}\l(\frac{x^{s}}{L(s, \chi)s}\r).
\end{align*}
Now, we can obtain the residues for non-positive integer in a similar manner as in
Theorem \ref{exfoML1} 
since the all trivial zeros of $L(s, \chi^{*})$ are simple.

We estimate the integrals.
As for the first integral, by inequality \eqref{INEQLF2}, we find that
\begin{align*}
\l|\int_{1 / 4 + iT_{\nu}}^{\s_0 + iT_{\nu}}\frac{x^s}{L(s, \chi)s}ds \r|
\leq \frac{x}{T}\exp\l(C(\log{\log{T}})^2\r).
\end{align*}

Next we consider the fourth integral.
Now, $F_{q, \chi^{*}}(s)$ is estimated by
\begin{align}	\label{INEFm}
|F_{q, \chi^{*}}(\s + it)| = \underset{p \nmid b}{\prod_{p|q}}\l|1 - \frac{\chi^{*}(p)}{p^{\s + it}} \r|
\geq \underset{p \nmid b}{\prod_{p|q}}\l(p^{-\s} - 1\r) 
\geq |\s|^{\omega(q)}\log{2}
\end{align}
for $\s \leq -h$. Here $b$ is the modulo of $\chi^{*}$.
Therefore, by Lemma \ref{RLO} and \eqref{INEFm}, the fourth integral is estimated by
\begin{align*}
\int_{|t| \leq T_{*}}\frac{x^{-M+it}}{L(-M + it, \chi)(-M + it)}dt
&\ll_{q} x^{-M}\int_{|t| \leq T_{*}}\l(\frac{2 \pi e}{M}\r)^{M}M^{-3/2}dt\\
&\ll \l(\frac{x}{2 \pi e}\r)^{-M}M^{-M - 3 / 2}T.
\end{align*}
The last term tends to zero as $M \rightarrow +\infty$.
That is, 
\begin{align*}	
\lim_{M \rightarrow \infty}\int_{|t| \leq T_{*}}\frac{x^{-M+it}}{L(-M + it, \chi)(-M + it)}dt = 0.
\end{align*} 

Next we consider the second integral.
Now, we can see $[T_{\nu}, T_{*}] \subset J_{0}$ since $T_{\nu} \in J_{1}$, and so
we can apply Corollary \ref{uniesconL} to the second integral.
Hence, by Corollary \ref{uniesconL}, \eqref{RLO1} and \eqref{INEF_Th_2_1}, we have
\begin{align*}
\int_{1 / 4 + iT_{*}}^{1/4 + iT_{\nu}}\frac{x^{s}}{L(s, \chi)s}ds
\ll \frac{x^{1/4}}{T}(q^{\omega(q)} \log{T})^{C}
\ll \frac{x^{1/4}}{T}\exp\l(C(\log{\log{T}})^2\r).
\end{align*}

Next we consider the third integral.
We put
\begin{align*}
\int_{-M + iT_{*}}^{1/4 + iT_{*}}\frac{x^{s}}{L(s, \chi)s}ds
= \l(\int_{-1 + iT_{*}}^{1/4 + iT_{*}} + \int_{-M + iT_{*}}^{-1 + iT_{*}}\r)\frac{x^{s}}{L(s, \chi)s}ds.
\end{align*}
Here, we can also apply Corollary \ref{uniesconL} to this case by $T_{*} \in J_{0}$.
By Corollary \ref{uniesconL}, \eqref{RLO1}, \eqref{INEF_Th_2_1}, \eqref{est_Fq} and \eqref{INEFm}, we can find that
\begin{align*}
\int_{-1 + iT_{*}}^{1/4 + iT_{*}}\frac{x^{s}}{L(s, \chi)s}ds
&\ll \frac{\exp\l( C\l(\frac{(\log{q})^2}{\log{\log(q + 5)}} +\log{\log{T}} \r) \r)}{T^{3/2}}\int_{-1}^{1/4}(xT)^{\s}d\s\\
&\ll \frac{x^{1/4}\exp\l(C(\log{\log{T}})^2 \r)}{T^{5 / 4}\log(xT)},
\end{align*}
and that
\begin{align*}
\int_{-M + iT_{*}}^{-1 + iT_{*}}\frac{x^{s}}{L(s, \chi)s}ds
\ll \frac{1}{{T}^{3 / 2}}\int_{-M}^{-1}(xT_{*})^{\sigma}d\sigma
\ll \frac{1}{xT^{5/2}\log(xT)}.
\end{align*}
From the above discussion, the first four integrals are estimated by
\begin{align}	\label{int_es_non_pri}
\ll \frac{x\exp\l( C(\log{\log{T}})^2 \r)}{T}.
\end{align}
The remaining integrals also have the same upper bound since $L(\bar{s}, \chi) = \overline{L(s, \overline{\chi})}$ holds, and 
estimate \eqref{int_es_non_pri} is uniform for $\chi \in S^{*}(q)$.
Thus, we obtain estimate \eqref{APME} for Theorem \ref{exfoML2}.

Finally, we consider the upper bound of the sum of the residues on the imaginary axis.
We separate the sum in three parts such that
\begin{align*}
&\sum_{|\eta| < T_{*}}\Res_{s = i\eta}\l( \frac{x^{s}}{L(s, \chi)s} \r)\\
&= \Res_{s = 0}\l( \frac{x^{s}}{L(s, \chi)s} \r) + \sum_{0 < |\eta| \leq T_{0}(q)}\Res_{s = i\eta}\l( \frac{x^{s}}{L(s, \chi)s} \r)
+ \sum_{T_{0}(q) < |\eta| < T_{*}}\Res_{s = i\eta}\l( \frac{x^{s}}{L(s, \chi)s} \r).
\end{align*}
By estimates \eqref{INEF_Th_2_1}, \eqref{est_Fq}, and \eqref{INEFm}, 
we can find a constant $T_{0}(q)$, depending only on $q$, with
$T_{0}(q) \in [\omega(q) + 5, \omega(q) + 6]$ and
\begin{align*}
|F_{q, \chi^{*}}(\s + iT_{0}(q))|^{-1} \leq \exp\l( \frac{C(\log{q})^2}{\log{\log(q + 5)}} \r)
\end{align*}
for $|\s| \leq 2$. 
By the Leibniz rule, 
 the first sum is estimated by 
\begin{align*}
&\Res_{s = 0}\l( \frac{x^{s}}{L(s, \chi)s} \r)\\
&= \frac{1}{(r + 1 - \kappa)!}\lim_{s \rightarrow 0}\sum_{j = 0}^{r + 1 - \kappa}\begin{pmatrix}
	r + 1 - \kappa \\
	j
\end{pmatrix}
x^{s}(\log{x})^{r + 1 - \kappa - j}\frac{d^{j}}{ds^{j}}\l( \frac{s^{r + 2 - \kappa}}{L(s, \chi)s} \r)\\
&= \frac{(\log{x})^{r + 1 - \kappa}}{L^{(r + 1 - \kappa)}(0, \chi)} + O_{q}\l( (\log{x})^{r - \kappa} \r).
\end{align*}
On the second sum, by the simplicity of zeros of $F_{q, \chi^{*}}$, we have
\begin{align*}
\sum_{0 < |\eta| \leq T_{0}(q)}\Res_{s = i\eta}\l( \frac{x^{s}}{L(s, \chi)s} \r)
= \sum_{0 < |\eta| \leq T_{0}(q)}\frac{x^{i\eta}}{L'(i\eta, \chi)i\eta} = O_{q}(1).
\end{align*}
As for the third sum, we use the result that
\begin{align}	\label{L_upbd}
\frac{1}{L(s, \chi)} \ll \log(q|t|)
\end{align}
for 
\begin{align}	\label{L_zfr}
\s \geq 1 - \frac{c}{\log{q} + (\log(|t|))^{2/3}(\log{\log(|t|)})^{1/3}} \quad \mathrm{and} \quad |t| \geq 5,
\end{align}
where $c$ is a positive absolute constant.
On the region \eqref{L_zfr}, we refer to \cite[\S 9.5]{MTL}.
The author cannot find the above upper bound \eqref{L_upbd} in this region in references, 
but we can obtain it by the standard method  (cf. \cite[\S 11.1]{MV}).
Here, we put $\epsilon(x) = \frac{1}{\log{x}}$ with $x \geq q^{C}\exp\l(C(\log{T})^{2/3}(\log{\log{T}})^{1/3}\r)$.
Then, by the residue theorem, the third sum can be written as
\begin{align*}
&\sum_{T_{0}(q) < \eta < T_{*}}\Res_{s = i\eta}\l( \frac{x^{s}}{L(s, \chi)s} \r)\\
&= \frac{1}{2\pi i}\l( \int_{\epsilon(x) + iT_{0}(q)}^{\epsilon(x) + iT_{*}} + \int_{\epsilon(x) + iT_{*}}^{-\epsilon(x) + iT_{*}}
+ \int_{-\epsilon(x) + iT_{*}}^{-\epsilon(x) + iT_{0}(q)} + \int_{-\epsilon(x) + iT_{0}(q)}^{\epsilon(x) + iT_{0}(q)}  \r)
\frac{x^{s}}{L(s, \chi)s}ds.
\end{align*}
From the definitions of $T_{0}(q)$ and $T_{*}$, we find that
\begin{align*}
\l(\int_{\epsilon(x) + iT_{*}}^{-\epsilon(x) + iT_{*}} + \int_{-\epsilon(x) + iT_{0}(q)}^{\epsilon(x) + iT_{0}(q)}\r)\frac{x^{s}}{L(s, \chi)s}ds
= O_{q}(1).
\end{align*}
On the other hand, by using Lemma \ref{shortintzerofF} and Proposition \ref{log_F} 
with $h' = \sqrt{\frac{\omega(q' / b')}{\log\l(q' / b'\r)\log{\log{x}}}}$, we have
\begin{align*}
|F_{q, \chi^{*}}(\s + it)|
&\geq \exp\l( -C_1\sqrt{\omega\l(q' / b'  \r)\log\l(q' / b'\r)\log{\log{x}}}\r)
\prod_{|t - \eta| \leq h'}|\s + i(t - \eta)|\\
&\geq \exp\l(-C_2\sqrt{\omega\l(q' / b'  \r)\log\l(q' / b'\r)\log{\log{x}}}\r)\l( \frac{1}{\log{x}} \r)^{\omega\l(q' / b'\r)}
\end{align*}
on the lines $|\s| = (\log{x})^{-1}$, $|t| \geq 2$.
Now $b$ denotes the modulus of $\chi^{*}$.
Therefore, for $|\s| = (\log{x})^{-1}, |t| \geq \frac{\log{q}}{\log{\log(q + 2)})} + 5$, we have
\begin{align*}
\frac{1}{L(\s + it, \chi)} 
&\ll \exp\l( C_2\sqrt{\omega\l(q' / b'  \r)\log\l(q' / b'\r)\log{\log{x}}} \r)\frac{(\log{x})^{\omega\l(q' / b' \r)}}{b|t||L(1 + \s + it, \chi^{*})|}\\
&\ll \exp\l( C_2\sqrt{\omega\l(q' / b' \r)\log\l(q' / b'\r)\log{\log{x}}} \r)(\log{x})^{\omega\l(q' / b' \r)}\frac{\log(b|t|)}{b|t|}
\end{align*}
by Lemma \ref{RLO}. Hence we have
\begin{align*}
&\int_{\pm \epsilon(x) + iT_{0}(q)}^{\pm \epsilon(x) + iT_{*}}\frac{x^{s}}{L(s, \chi)s}ds\\
&\ll \exp\l( C_2\sqrt{\omega\l(q' / b'  \r)\log\l(q' / b'\r)\log{\log{x}}} \r)
(\log{x})^{\omega\l(q' / b' \r)}\int_{T_{0}(q)}^{T_{*}}\frac{\log(bt)}{bt^{2}}dt\\
&\ll \exp\l( C_2\sqrt{\omega\l(q' / b'  \r)\log\l(q' / b'\r)\log{\log{x}}} \r)(\log{x})^{\omega\l(q' / b' \r)}.
\end{align*}
Thus we have
\begin{align*}
\sum_{T_{0}(q) < \eta < T_{*}}\Res_{s = i\eta}\l( \frac{x^{s}}{L(s, \chi)s} \r)
\ll \exp\l( C\sqrt{\omega\l(q' / b'  \r)\log\l(q' / b'\r)\log{\log{x}}} \r)(\log{x})^{\omega\l(q' / b'\r)},
\end{align*}
where $C$ is a positive absolute constant.
Similarly, we have
\begin{align*}
\sum_{-T_{*} < \eta < -T_{0}(q)}\Res_{s = i\eta}\l( \frac{x^{s}}{L(s, \chi)s} \r)
\ll \exp\l( C\sqrt{\omega\l(q' / b'  \r)\log\l(q' / b'\r)\log{\log{x}}} \r)(\log{x})^{\omega\l(q' / b' \r)}.
\end{align*}
From the above estimates, we obtain \eqref{IAzeros_bd}.
\end{proof}


\section{\textbf{Proof of Theorem \ref{exfoMDA}}}


First, we prepare some lemmas.

\begin{lemma}	\label{basic_bound}
Let $K$ be any number field, $n_K$ be the degree of $K$, and $\kappa_{K}$ be the residue of $\zeta_K(s)$ at $s = 1$. 
Then, for $\s > 1$, we have
\begin{align}	\label{def_A}
\l|\zeta_K(\s + it)\r| 
\leq \min\l\{\zeta(\s)^{n_K}, \frac{\s}{\s - 1}\kappa_K + \frac{\s \Phi_{0}(K)}{\s - 1 + 1 / n_K}\r\}
=: \Phi_{K}(\s),
\end{align}
where $\Phi_{0}(K)$ is a constant depending only on $K$ such that
\begin{align*}
\l|\sum_{N(\mf{a}) \leq x}1 - \kappa_K x \r| \leq  \Phi_{0}(K)x^{1 - 1 / n_K}.
\end{align*}
\end{lemma}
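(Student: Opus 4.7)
The plan is to handle each term in the minimum separately, using the fact that $\zeta_K(s)$ admits a Dirichlet series $\sum_{n\geq 1} a_n n^{-s}$ with nonnegative coefficients $a_n = \#\{\mathfrak{a}\subset O_K : N(\mathfrak{a}) = n\}$, so that the triangle inequality immediately gives $|\zeta_K(\sigma+it)| \leq \zeta_K(\sigma)$ for $\sigma > 1$. Thus it suffices to prove both upper bounds for the real quantity $\zeta_K(\sigma)$.

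For the first bound $\zeta_K(\sigma) \leq \zeta(\sigma)^{n_K}$, I would use the Euler product
\[
\zeta_K(\sigma) = \prod_{p}\prod_{\mathfrak{p}\mid p}\bigl(1 - N(\mathfrak{p})^{-\sigma}\bigr)^{-1}
\]
and group factors over rational primes. Writing $f = f(\mathfrak{p}\mid p)$ for the residue degree, the identity $1 - x^f = (1-x)(1 + x + \dots + x^{f-1})$ gives $(1-p^{-f\sigma})^{-1} \leq (1-p^{-\sigma})^{-1} \leq (1-p^{-\sigma})^{-f}$ for $\sigma > 0$. Combined with the fundamental identity $\sum_{\mathfrak{p}\mid p} e(\mathfrak{p}\mid p)f(\mathfrak{p}\mid p) = n_K$, which in particular implies $\sum_{\mathfrak{p}\mid p} f(\mathfrak{p}\mid p) \leq n_K$, I get $\prod_{\mathfrak{p}\mid p}(1-N(\mathfrak{p})^{-\sigma})^{-1} \leq (1-p^{-\sigma})^{-n_K}$, and taking the product over $p$ yields $\zeta_K(\sigma) \leq \zeta(\sigma)^{n_K}$.

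For the second bound, I would use Abel/partial summation on the Dirichlet series. Setting $A(x) := \sum_{N(\mathfrak{a})\leq x} 1$ and writing $A(x) = \kappa_K x + R(x)$ with $|R(x)| \leq \Phi_0(K)\, x^{1 - 1/n_K}$ by hypothesis, one has for $\sigma > 1$
\[
\zeta_K(\sigma) = \sigma\int_1^{\infty} \frac{A(x)}{x^{\sigma+1}}\,dx = \frac{\sigma\kappa_K}{\sigma - 1} + \sigma\int_1^{\infty}\frac{R(x)}{x^{\sigma+1}}\,dx.
\]
The remainder integral is bounded trivially by $\sigma\Phi_0(K)\int_1^\infty x^{-\sigma - 1/n_K}\,dx = \sigma\Phi_0(K)/(\sigma - 1 + 1/n_K)$, giving the second estimate. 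Combining both cases with $|\zeta_K(\sigma+it)| \leq \zeta_K(\sigma)$ completes the proof.

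There is no genuine obstacle here: the argument is a routine packaging of the Euler product bound together with a standard partial-summation estimate, and the only thing to watch is the inequality $(1-p^{-f\sigma})^{-1}\leq (1-p^{-\sigma})^{-f}$ used to convert ramification data into a clean bound by $\zeta(\sigma)^{n_K}$.
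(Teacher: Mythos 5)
Your proof is correct and follows essentially the same route as the paper: the Euler product grouped over rational primes (with $\sum_{\mathfrak{p}\mid p} f \leq n_K$) for the bound $\zeta(\sigma)^{n_K}$, and partial summation with $A(x)=\kappa_K x + R(x)$ for the second bound. Your explicit reduction $|\zeta_K(\sigma+it)|\leq \zeta_K(\sigma)$ via nonnegative coefficients is a slightly more careful packaging of a step the paper leaves implicit, but the argument is the same.
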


\begin{proof}
By the Euler product for Dedekind zeta-functions, we find that
\begin{align*}
|\zeta_K(\s + it)| 
&\leq \prod_{\mf{p}}\l( 1 - \frac{1}{N(\mf{p})^{\s}} \r)^{-1}
= \prod_{p}\prod_{\mf{p} \mid p}\l( 1 - \frac{1}{p^{\deg(\mf{p})\s}} \r)^{-1}\\
&\leq \prod_{p}\l( 1 - \frac{1}{p^{\s}} \r)^{-n_K} = \zeta(\s)^{n_K}.
\end{align*}
On the other hand, using the partial summation, we have
\begin{align*}
|\zeta_{K}(\s + it)| 
\leq \s\int_{1}^{\infty}\frac{A(x)}{x^{\s + 1}}dx
\leq \frac{\s}{\s - 1}\kappa_K + \frac{\s \Phi_{0}(K)}{\s - 1 + 1 / n_K},
\end{align*}
where $A(x) = \sum_{N(\mf{a}) \leq x}1$.
Hence we obtain Lemma \ref{basic_bound}.
\end{proof}

\begin{lemma}	\label{RDO}
Let $K$ be a number field. 
If $|t| \geq 1$ and $\s \leq \frac{1}{2}$, then
\begin{align*}	
|\zeta_{K}(s)|
\gg \frac{1}{C^{n_K}} \l( \frac{(2\pi e)^{n_K}}{|d_K||s|^{n_K}} \r)^{\s}\l(|d_K| |s|^{n_K}\r)^{1/2}
\exp\l( n_K|t|\tan^{-1}\l( \frac{1 - \s}{|t|} \r) \r)|\zeta_K(1 - s)|
\end{align*}
If $|t| \leq 1, \s = -(m + 1 / 2)$, then
\begin{align*}	
|\zeta_{K}(s)|
\gg \frac{1}{C^{n_K}} \l( \frac{(2\pi e)^{n_K}}{|d_K||s|^{n_K}} \r)^{\s}\l(|d_K| |\s|^{n_K}\r)^{1/2}|\zeta_K(1 - s)|.
\end{align*}
The above $C$ are absolute constants.
\end{lemma}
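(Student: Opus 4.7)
The plan is to follow the strategy of Lemma~\ref{RLO} almost verbatim, with the functional equation for $\zeta_K$ replacing that of $L(s,\chi)$. Writing the completed function as $\Lambda_K(s) = |d_K|^{s/2}\Gamma_\mathbb{R}(s)^{r_1}\Gamma_\mathbb{C}(s)^{r_2}\zeta_K(s)$, where $\Gamma_\mathbb{R}(s) = \pi^{-s/2}\Gamma(s/2)$ and $\Gamma_\mathbb{C}(s) = (2\pi)^{-s}\Gamma(s)$, the identity $\Lambda_K(s) = \Lambda_K(1-s)$ rearranges to
\[
\zeta_K(s) = |d_K|^{1/2-s}\left(\frac{\Gamma_\mathbb{R}(1-s)}{\Gamma_\mathbb{R}(s)}\right)^{\!r_1}\left(\frac{\Gamma_\mathbb{C}(1-s)}{\Gamma_\mathbb{C}(s)}\right)^{\!r_2}\zeta_K(1-s),
\]
and the task reduces to a uniform lower bound on the archimedean factor.

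For the first assertion ($|t|\geq 1,\ \sigma\leq 1/2$) I would apply Stirling's formula to each of the two gamma ratios, after first using the reflection formula $\Gamma(w)\Gamma(1-w) = \pi/\sin(\pi w)$ to rewrite the numerator gamma at positive real part where needed. A direct computation yields
\[
\left|\frac{\Gamma(1-s)}{\Gamma(s)}\right| \asymp \left(\frac{2\pi e}{|s|}\right)^{\!\sigma}|s|^{1/2}\exp\!\left(|t|\tan^{-1}\!\frac{1-\sigma}{|t|}\right)
\]
up to an absolute multiplicative constant, with an analogous estimate for $\Gamma((1-s)/2)/\Gamma(s/2)$ obtained via the substitution $s \mapsto s/2$. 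Raising these to the $r_2$ and $r_1$ powers and collecting exponents using $r_1 + 2r_2 = n_K$ produces the factor $(2\pi e/|s|)^{n_K\sigma}|s|^{n_K/2}\exp(n_K|t|\tan^{-1}((1-\sigma)/|t|))$; combining this with $|d_K|^{1/2-\sigma}$ and $|\zeta_K(1-s)|$ gives the stated inequality, the $\pi^{-s/2}$ and $(2\pi)^{-s}$ factors built into $\Gamma_\mathbb{R}$ and $\Gamma_\mathbb{C}$ combining consistently with the $2\pi e$ appearing in the Stirling ratio.

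The second assertion is the degenerate regime: for $|t|\leq 1$ and $\sigma = -(m+1/2)$ the point $s$ stays bounded away from all poles of $\Gamma(s/2)$ and $\Gamma(s)$, so Stirling still applies uniformly. The factor $\exp(|t|\tan^{-1}((1-\sigma)/|t|))$ is now bounded by an absolute constant (since $|t|\leq 1$ and the arctangent is at most $\pi/2$) and hence absorbable into $C^{n_K}$, while $|s|\asymp|\sigma|$ converts the half-power into $|\sigma|^{n_K/2}$; everything else follows as before.

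The main technical point is uniformity across the $r_1 + r_2 \leq n_K$ archimedean factors. Each gamma ratio carries a Stirling error of the form $1+O(1/|s|)$, and I would verify that $|s|$ is bounded below by an absolute constant throughout each region ($|t|\geq 1$ in the first case, $|\sigma|\geq 1/2$ in the second) so that the $n_K$-fold product of these errors is at most $C^{n_K}$ with an absolute $C$. This is exactly why the stated bound carries a $C^{n_K}$ prefactor rather than $O(1)$, and once this uniformity is in hand the remainder of the argument is routine bookkeeping through the Stirling expansions.
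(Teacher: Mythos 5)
Your proposal takes essentially the same route as the paper, whose proof simply quotes the asymmetric functional equation $\zeta_K(s)=2^{n_K}(2\pi)^{n_K(s-1)}|d_K|^{1/2-s}\Gamma(1-s)^{n_K}\bigl(\sin\frac{\pi s}{2}\bigr)^{r_1+r_2}\bigl(\cos\frac{\pi s}{2}\bigr)^{r_2}\zeta_K(1-s)$ and invokes Stirling; your symmetric $\Lambda_K$ formulation becomes exactly this after one use of the reflection formula (which, note, is needed for the denominators $\Gamma(s)$, $\Gamma(s/2)$, whose arguments approach the negative real axis, not for the numerators). One correction to your intermediate display: for $|t|\ge 1$ one actually has $|\Gamma(1-s)/\Gamma(s)|\asymp |1-s|^{1-2\sigma}e^{2(\sigma-1)}\exp\bigl(2|t|\tan^{-1}\frac{1-\sigma}{|t|}\bigr)$, i.e.\ the square of the unit you wrote; your formula is the per-degree contribution (it is correct for the real-place ratio $\Gamma_{\mathbb{R}}(1-s)/\Gamma_{\mathbb{R}}(s)$), so raising your displays literally to the powers $r_2$ and $r_1$ would produce exponents $r_1+r_2$ rather than $n_K$. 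Since you explicitly invoke $r_1+2r_2=n_K$ and state the correct combined archimedean factor, this is only an accounting slip, not a flaw in the method; you should also record that replacing $|1-s|$ (and $|2-s|$) by $|s|$ in the final lower bound is legitimate because $|1-s|\ge|s|$ when $\sigma\le\frac12$ and the exponent $n_K(\frac12-\sigma)$ is nonnegative, and that in the second case the choice $\sigma=-(m+\frac12)$ keeps $|\sin\frac{\pi s}{2}|$ and $|\cos\frac{\pi s}{2}|$ bounded below by absolute constants.
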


\begin{proof}
By the functional equation for $\zeta_K(s)$ which is
\begin{align*}
\zeta_{K}(s) = 2^{n_K}(2\pi)^{n_K(s - 1)}|d_K|^{1/2 - s}(\Gamma(1 - s))^{n_K}
\l( \sin{\frac{\pi s}{2}} \r)^{r_1 + r_2}\l(\cos{\frac{\pi s}{2}}\r)^{r_2}\zeta_{K}(1 - s),
\end{align*}
and the Stirling formula, we obtain this lemma.
\end{proof}

\begin{proof}[Proof of Theorem \ref{exfoMDA}]
Let $x > 0$, $T \geq \max\l\{9, \exp\l(m^{1/3}\r), 2 / x\r\}$ and $\s_0 = 1 + 1 / \log(x + 3)$.
First, using Perron's formula, we have
\begin{align}	\label{apply_Perron_DA}
M_{K}^{*}(x) 	
=& \frac{1}{2\pi i}\int_{\s_0 -iT_{\nu}}^{\s_0 + iT_{\nu}}\frac{x^{s}}{\zeta_K(s)s}ds
+ O\l(\frac{x}{T}\Phi_{K}(\s_0) + \min\l\{1, \frac{x}{T \l< x \r> } \r\} \r),
\end{align}
where $T_{\nu}$ satisfies the inequality
\begin{align}	\label{D_lower}
|\zeta_K(\s + iT_{\nu})|^{-1}
\leq \exp\l(C n_K (\#X(K)) (\log{\log{T}})^2\r)
\end{align}
for any $-1 \leq \s \leq 2$ with $T_{\nu} \in [T, 2T]$ by Corollary \ref{abellowerbound} and Lemma \ref{RDO}.
Here we remark that $T \geq \exp\l( m^{1 / 3} \r) \gg m$.

Let $R = r + \frac{1}{2}$ with positive integer $r$ satisfying $r > T$. By the residue theorem, we have
\begin{align*}	
&\frac{1}{2\pi i}\int_{\s_0 -iT_{\nu}}^{\s_0 + iT_{\nu}}\frac{x^s}{\zeta_{K}(s)s}ds 
= \frac{1}{2\pi i}\l(\int_{-R + iT}^{\s_0 + iT_{\nu}}+\int_{-R - iT_{\nu}}^{-R + iT_{\nu}}+\int_{\s_0 - iT_{\nu}}^{-R - iT_{\nu}}\r)
\frac{x^s}{\zeta_{K}(s)s}ds	\\ 
&	+\sum_{|\gamma|<T_*}\underset{s=\rho}{\Res}\l(\frac{x^s}{\zeta_{K}(s)s}\r)
	+\sum_{0 \leq l < R}\underset{s=-l}{\Res}\l(\frac{x^s}{\zeta_{K}(s)s}\r)\\
&=: J_1 + J_2 + J_3
+\sum_{|\gamma|<T_{\nu}}\underset{s = \rho}{\Res}\l(\frac{x^s}{\zeta_{K}(s)s}\r)
+\sum_{0 \leq l < R}\underset{s = -l}{\Res}\l(\frac{x^s}{\zeta_{K}(s)s}\r).
\end{align*}
Here, by the basic formula for residues, we find that
\begin{align*}
\underset{s=\rho}{\Res}\l(\frac{x^s}{\zeta_{K}(s)s}\r)
= \frac{1}{(m(\rho)-1)!}\lim_{s \rightarrow \rho}\frac{d^{m(\rho)-1}}{ds^{m(\rho)-1}}
\l((s-\rho)^{m(\rho)}\frac{x^{s}}{\zeta_{K}(s)s}\r).
\end{align*}

Next, we estimate the integrals.
By Lemma \ref{RDO}, $J_2$ is evaluated by
\begin{align*}
|J_2|
&=	\l|\int_{|t| \leq T_{\nu}}\frac{x^{-R+it}}{\zeta_K(-R + it)(-R + it)}dt \r|
\ll x^{-R}C^{n_K}\int_{|t| \leq T_{\nu}}\l( \frac{(2 \pi e)^{n_K}}{R^{n_K}} \r)^{R}R^{-n_K / 2 - 1}dt\\
&\ll C^{n_K}\l( \frac{x}{(2 \pi e)^{n_K}} \r)^{-R}R^{-(R - 1 / 2)n_K - 1}T.
\end{align*}
Therefore, we have
\begin{align*}
\lim_{R \rightarrow \infty}J_{2} = 0.
\end{align*}
%
%
Next, we estimate $J_{1}$.
Now, we put
\begin{align*}
J_1
&=\frac{1}{2\pi i}\l(\int_{-1 + iT_{\nu}}^{\s_0+iT_{\nu}} + \int_{-R + iT_{\nu}}^{-1 + iT_{\nu}}\r)
\frac{x^s}{\zeta_K(s)s}ds
=: J_{1}' + J_{1}''.
\end{align*}
By Lemma \ref{basic_bound}, Lemma \ref{RDO} and estimate \eqref{D_lower}, we find that
\begin{align*}
|J_{1}'|
&\ll \int_{-1}^{\s_0} x T_{\nu}^{-1}\exp\l(C n_K(\#X(K))(\log{\log{T}})^2\r)d\s 
\ll \frac{x\exp\l( C n_K (\log{\log{T}})^2 \r)}{T},
\end{align*}
and that
\begin{align*}
|J_{1}''|
\ll \frac{C^{n_K}}{{T}^{3 / 2}}\int_{-R}^{-1}(xT_{\nu})^{\sigma}d\sigma
\ll \frac{C^{n_K} (x T)^{-1}}{{T}^{3/2}\log(xT)}.
\end{align*} 
Hence we have
\begin{align*}	
J_1 
\ll \frac{x \exp\l( C n_K (\#X(K)) (\log{\log{T}})^2 \r)}{T}.
\end{align*}
Similarly, by the Schwarz reflection principle, we have
\begin{align*}
J_3
\ll \frac{x \exp\l( C n_K (\#X(K))(\log{\log{T}})^2 \r)}{T}.
\end{align*}
Thus we obtain estimate \eqref{R'_bound}.

Finally, we consider the upper bound of the sum of the residues for non-positive integers.
We put $r = r_1 + r_2 - 1$.
Then, by the Leibniz rule, we find that
\begin{align*}
&\Res_{s = 0}\l( \frac{x^{s}}{\zeta_K(s)s} \r)
= \frac{1}{r!}\lim_{s \rightarrow 0}\frac{d^{r}}{ds^{r}}\l( \frac{s^{r + 1}}{\zeta_{K}(s)s}x^{s} \r)\\
&= \frac{1}{r!}(\log{x})^{r}\lim_{s \rightarrow 0}\frac{s^{r}}{\zeta_K(s)}
+ \frac{(1 - \delta_{0, r})}{r!}\sum_{j = 1}^{r}\begin{pmatrix}
	r \\
	j
\end{pmatrix}
(\log{x})^{r - j}\lim_{s \rightarrow 0}\frac{d^{j}}{ds^{j}}\l( \frac{s^{r}}{\zeta_K(s)} \r)\\
&= -\frac{(2 \pi)^{n_K} (\log{x})^{r}(2 / \pi)^{r_1 + r_2}}{2^{n_K} |d_K|^{1/2} \kappa_K} 
+ O_{K}\l((1 - \delta_{0, r})|\log{x}|^{r - 1} \r).
\end{align*}
On the other hand, for $l \geq 1$, we have
\begin{align*}
\Res_{s = -l}\l( \frac{x^{s}}{\zeta_K(s)s} \r)
&= \int_{|s + l| = \frac{1}{\log(x + 3)}}\frac{x^{s}}{\zeta_K(s)s}ds\\
&= \frac{i}{\log(x + 3)}\int_{0}^{2 \pi}\frac{x^{-l + \frac{e^{i\theta}}{\log(x + 3)}}e^{i\theta}}
{\zeta_K\l( -l + \frac{e^{i\theta}}{\log(x + 3)} \r)\l( -l + \frac{e^{i\theta}}{\log(x + 3)} \r)}d\theta\\
&\ll \frac{x^{-l}}{l\log(x + 3)}\int_{0}^{2 \pi}\frac{d\theta}{\l|\zeta_K\l( -l + \frac{e^{i\theta}}{\log(x + 3)} \r)\r|}
\end{align*}
by the Cauchy formula.
Now, by the functional equation and the Stirling formula, we can find that if $l$ is even, then
\begin{align*}
\l|\zeta_K\l( -l + \frac{e^{i\theta}}{\log(x + 3)} \r)\r|^{-1}
\ll C^{n_K}\l(\frac{2 \pi e}{l}\r)^{n_K l}l^{-n_K / 2}(\log(x + 3))^{r_1 + r_2},
\end{align*}
and that if $l$ is odd, then
\begin{align*}
\l|\zeta_K\l( -l + \frac{e^{i\theta}}{\log(x + 3)} \r)\r|^{-1}
\ll C^{n_K}\l(\frac{2 \pi e}{l}\r)^{n_K l}l^{-n_K / 2}(\log(x + 3))^{r_2}.
\end{align*}
Hence, for $l \geq 1$, we obtain 
\begin{align*}
\Res_{s = -l}\l( \frac{x^{s}}{\zeta_K(s)s} \r)
\ll \l\{
\begin{array}{ll}
\d{\frac{C^{n_K}x^{-l}}{l^{n_K / 2 + 1}}\l(\frac{2 \pi e}{l}\r)^{n_K l}(\log(x + 3))^{r_1 + r_2 - 1}}	&\text{if \; $l$ is even,}\\
\d{(1 - \delta_{0, r_2})\frac{C^{n_K}x^{-l}}{l^{n_K / 2 + 1}}\l(\frac{2 \pi e}{l}\r)^{n_K l}(\log(x + 3))^{r_2 - 1}}	&\text{if \; $l$ is odd.}
\end{array}
\r.
\end{align*}
Form the above estimates, we obtain Theorem \ref{exfoMDA}.
\end{proof}


\section{\textbf{Proof of Theorem \ref{onemomentforLunconditional}
}}


\begin{proof}[Proof of Theorem \ref{onemomentforLunconditional}]
Let $\chi$ be a primitive Dirichlet character modulo $q$.
Assume the simple zero conjecture for $L(s, \chi)$.
Now, there exisits the domain $-3 / 4 \leq \s \leq 2, 0 < t \leq 2\delta(\chi)$ with $\delta(\chi) \leq 5$
such that this domain does not have zeros of $L(s, \chi)$
since $L(s, \chi)$ is entire function.
In addition, by the compactness of the line segment $-3/4 \leq \s \leq 2, t = \delta$, and 
the continuity of $L(s, \chi)$, we have
\begin{align*}
\frac{1}{|L(s, \chi)|} \leq C(\chi)
\end{align*}
on the same domain, where $C(\chi)$ is a sufficiently large constant depending only on $\chi$.

Let $T \geq \exp\l( q^{1/3} \r)$.
Here, by Proposition \ref{uniformlowerboundforL} and Lemma \ref{RLO}, there exist some $T_{\nu} \in [T, 2T]$ satisfying
\begin{align}	\label{LIN}
|L(\s + iT_{\nu}, \chi)|^{-1}
\leq \exp(C(\log{\log{T}})^2)
\end{align}
for $-1 \leq \s \leq 2$.

Now, by the residue theorem, we have
\begin{align*}
\sum_{0 < \gamma < T_{\nu}}\frac{1}{L'(\rho, \chi)}
 = \frac{1}{2 \pi i}\l(\int_{2 + i\delta(\chi)}^{2 + iT_{\nu}} + \int_{2 + iT_{\nu}}^{-3/4 + iT_{\nu}} 
+ \int_{-3/4 + iT_{\nu}}^{-3/4 + i\delta(\chi)} + \int_{-3/4 + i\delta(\chi)}^{2 + i\delta(\chi)}\r)\frac{ds}{L(s, \chi)}.
\end{align*}
From the way of taking $\delta(\chi)$ and $T_{\nu}$, the integrals on the horizontal line parts are estimated by
\begin{align*}
\l|\int_{-3/4 + iT_{\nu}}^{2 \pm iT_{\nu}}\frac{ds}{L(s, \chi)}\r|
\ll \exp(C (\log{\log{T}})^2),
\end{align*}
and 
\begin{align*}
\l|\int_{-3/4 + i\delta(\chi)}^{2 + i\delta(\chi)}\frac{ds}{L(s, \chi)}\r|
\leq 3C(\chi).
\end{align*}
In addition, we have
\begin{align*}
\l| \int_{-3/4 + i\delta(\chi)}^{-3/4 + iT_{\nu}}\frac{ds}{L(s, \chi)} \r| \ll 1
\end{align*}
since $|L(-3/4 + it, \chi)|^{-1} \ll (|t| + 1)^{-5/4}$ by Lemma \ref{RLO}.
On the first integral term, by the Dirichlet series expression, we have
\begin{align*}
\frac{1}{2 \pi i}\int_{2 + i\delta(\chi)}^{2 + iT_{\nu}}\frac{ds}{L(s, \chi)}
&= \frac{1}{2 \pi}\int_{\delta(\chi)}^{T_{\nu}}\sum_{n = 1}^{\infty}\frac{\mu(n)\chi(n)}{n^{2 + it}}dt\\
&= \frac{T_{\nu}}{2 \pi} + \frac{1}{2 \pi}\sum_{n = 2}^{\infty}\frac{\mu(n)\chi(n)}{n^2}\int_{\delta(\chi)}^{T_{\nu}}n^{-it}dt + O(1)\\
&= \frac{T_{\nu}}{2\pi} + O\l( \sum_{n = 2}^{\infty}\frac{1}{n^2 \log{n}} \r)
=\frac{T_{\nu}}{2\pi} + O(1).
\end{align*}
Hence we have
\begin{align*}
\sum_{0 < \gamma < T_{\nu}}\frac{1}{L'(\rho, \chi)}
= \frac{T_{\nu}}{2 \pi} + O\l(\exp\l(C (\log{\log{T}})^2\r) + C(\chi)\r).
\end{align*}
In particular, for $T \geq T_{0}(q)$ with sufficiently large constant $T_{0}(q)$ depending only on $q$, we obtain
\begin{align*}
\sum_{0 < \gamma \leq 2T}\frac{1}{|L'(\rho, \chi)|}
\geq \sum_{0 < \gamma < T_{\nu}}\frac{1}{|L'(\rho, \chi)|}
\geq \l|\sum_{0 < \gamma < T_{\nu} } \frac{1}{L'(\rho, \chi)} \r| \gg T,
\end{align*}
which completes the proof of Theorem \ref{onemomentforLunconditional}.
\end{proof}

\begin{acknowledgment*}
The author expresses his gratitude to Professor Kohji Matsumoto for his helpful comments.
\end{acknowledgment*}





\end{document}